\numberwithin{equation}{section}
\numberwithin{figure}{section}
\theoremstyle{plain}
\newtheorem{thm}{\protect\theoremname}[section]
  \theoremstyle{plain}
  \newtheorem{prop}[thm]{\protect\propositionname}
  \theoremstyle{definition}
  \newtheorem{defn}[thm]{\protect\definitionname}
  \theoremstyle{remark}
  \newtheorem{rem}[thm]{\protect\remarkname}
  \theoremstyle{plain}
  \newtheorem{lem}[thm]{\protect\lemmaname}
  \theoremstyle{definition}
  \newtheorem{example}[thm]{\protect\examplename}
  \newtheorem{cor}[thm]{\protect\corollaryname}
  \theoremstyle{plain}
  \providecommand{\definitionname}{Definition}
  \providecommand{\examplename}{Example}
  \providecommand{\lemmaname}{Lemma}
  \providecommand{\propositionname}{Proposition}
  \providecommand{\remarkname}{Remark}
  \providecommand{\corollaryname}{Corollary}
\providecommand{\theoremname}{Theorem}
\begin{document}

\title{Realization spaces of algebraic structures on cochains}

\address{University of Copenhagen, Department of Mathematical Sciences,
Universitetsparken 5, 2100 København}

\email{yalinprop@gmail.com}

\author{Sinan Yalin}
\begin{abstract}
Given an algebraic structure on the cohomology of a cochain complex, we define its realization space as a Kan complex whose vertices are the structures up to homotopy realizing this structure at the cohomology level. Our algebraic structures are parameterized by props and thus include various kinds of bialgebras. We give a general formula to compute subsets of equivalence classes of realizations as quotients of automorphism groups, and determine the higher homotopy groups via the cohomology of deformation complexes. As a motivating example, we compute subsets of equivalences classes of realizations of Poincaré duality for several examples of manifolds.
\end{abstract}
\maketitle

\tableofcontents{}

\section*{Introduction}

\subsection*{Motivations}

Realization problems of algebraic structures appear naturally in various contexts related to topology and geometry.
It consists in determining a space whose cohomology or homotopy groups are isomorphic to a given algebra,
a cochain complex equipped with an algebraic structure up to homotopy whose cohomology is isomorphic to a given algebra,
or constructing an algebraic structure up to homotopy on a given cochain complex which induces the desired structure at the cohomology level.
In practice, one defines a moduli space of such objects (spaces, homotopy algebras or homotopy structures)
and sets up an obstruction theory to determine whether this moduli space is non empty (existence problem) and connected (uniqueness problem).
This obstruction theoretic approach to connectedness of such moduli spaces goes back to the pioneering work of Halperin and Stasheff \cite{HS}.
Such methods were successfully applied in \cite{BDG} to determine the topological spaces realizing a given $\Pi$-algebra (a graded group equipped
with the algebraic operations existing on the homotopy groups of a pointed connected topological space).
A similar approach was thoroughly developed  by Goerss and Hopkins in \cite{GH} to build obstruction theories for multiplicative structures on ring spectra,
proving in particular the existence and uniqueness of $E_{\infty}$-structures on Lubin-Tate spectra (a non trivial improvement of the Hopkins-Miller theorem).

Realization problems also occur in the differential graded setting,
and can involve algebraic structures not parameterized by operads but more general props.
Moreover, to fully understand the realization problem, one has to determine the
homotopy type of the corresponding moduli space.
In particular, counting the connected components (the equivalence classes of realizations) when the realization
is not unique is a very hard problem. Let us give two motivating examples.

The operations defining Poincaré duality on a closed connected oriented manifold organize
into a Frobenius algebra structure on its singular cohomology (when the coefficients are taken in a field), or equivalently a (unitary and counitary) Frobenius bialgebra.
As a consequence of \cite{LS}, we know that, when working over $\mathbb{Q}$, Poincaré duality can be realized at the cochain level
in a Frobenius bialgebra up to homotopy (a homotopy Frobenius bialgebra) which extends the $E_{\infty}$-structure generated by the (higher) cup products.
Briefly, this comes from the following argument line. Singular cochains with the higher cup products on a closed connected manifold $M$ form an $E_{\infty}$-algebra quasi-isomorphic to the Sullivan model of $M$ in rational homotopy theory (see for instance \cite{FOT}). This Sullivan model is a commutative dg algebra whose cohomology forms a Frobenius bialgebra, hence by \cite{LS} is quasi-isomorphic, as a commutative algebra, to a certain Frobenius bialgebra. This Frobenius bialgebra structure can then be transferred along the two quasi-isomorphisms above into a homotopy Frobenius bialgebra structure on singular cochains extending the higher cup products. Let us note that when working over $\mathbb{R}$, one can alternately use the de Rham complex of $M$ instead of its Sullivan model.
Such realizations are not well known at present, and even an explicit notion of homotopy Frobenius bialgebra was
obtained only recently in \cite{CMW}.
These higher operations are expected to contain geometric information about the underlying manifold.

String topology, introduced by Chas and Sullivan in \cite{CS1}, study the homology of loop spaces on manifolds. Due to the action of the circle on loops,
one considers either the non equivariant homology or the $S^1$-equivariant homology (called the string homology).
These graded spaces come with a rich algebraic structure of geometrical nature. In the non equivariant case, it forms a Batalin-Vilkovisky algebra,
and in the equivariant one it forms an involutive Lie bialgebra \cite{CS2}. This involutive Lie bialgebra
is expected to be induced by finer operations at the chain level, organized into a homotopy involutive Lie bialgebra.
Let us note that, even though we work in the cochain setting in this paper to apply our results to the study of Poincaré duality, our results still hold true when working with chains instead of cochains, and thus can be applied to the string topology example as well.

\subsection*{Results in the prop setting}

Let $P$ be a prop and $X$ be a cochain complex such that $H^*X$ forms a $P$-algebra.
A $P$-algebra up to homotopy, or homotopy $P$-algebra, is a an algebra over a cofibrant resolution $P_{\infty}$ of $P$.
In a $P_{\infty}$-algebra structure, the relations defining the $P$-algebra structure
are relaxed and are only satisfied up to a set of coherent (higher) homotopies.
According to \cite{Yal3}, such a notion is homotopy coherent with respect to the choice of a cofibrant resolution $P_{\infty}$.
We consider two sorts of realization spaces.
The first one, an analogue in the dg setting of those studied in \cite{GH},
is the simplicial nerve $\mathcal{N}\mathcal{R}^{P_{\infty}}$ of a category $\mathcal{R}^{P_{\infty}}$ whose objects are the $P_{\infty}$-algebras $(X,\phi)$ with a cohomology isomorphic
to $H^*X$ as a $P$-algebra, and morphisms are quasi-isomorphisms of $P_{\infty}$-algebras.
The second one is a Kan complex $Real_{P_{\infty}}(H^*X)$
whose vertices are the prop morphisms $P_{\infty}\rightarrow End_X$ inducing the $P$-algebra structure $P\rightarrow End_{H^*X}$
in cohomology. It is a simplicial subset of the simplicial mapping space $P_{\infty}\{X\}$ of maps $P_{\infty}\rightarrow End_X$ (the moduli space of $P_{\infty}$-algebra
structures on $X$). We decompose $Real_{P_{\infty}}(H^*X)$ into more manageable pieces called ``local realization spaces''
\[
Real_{P_{\infty}}(H^*X) = \coprod_{[X,\phi]\in\pi_0\mathcal{N}\mathcal{R}^{P_{\infty}}} P_{\infty}\{X\}_{[\phi]}.
\]
For a given $\phi:P_{\infty}\rightarrow End_X$, the local realization space $P_{\infty}\{X\}_{[\phi]}$ is a Kan complex whose vertices are the $P_{\infty}$-algebra
structures $\varphi:P_{\infty}\rightarrow End_X$ such that the $P_{\infty}$-algebras $(X,\varphi)$ and $(X,\phi)$ are related by
a zigzag of quasi-isomorphisms.
The main result of our paper reads:
\begin{thm}
Let $P$ be a prop with trivial differential and $P_{\infty}$ be a cofibrant resolution of $P$.
Let $X$ be a cochain complex such that $H^*X$ forms a $P$-algebra.

(1) There is an injection
\[
\coprod_{[\phi]\in\pi_0\mathcal{N}\mathcal{R}^{P_{\infty}}}
Aut_{\mathbb{K}}(H^*X)/Aut_{Ho(Ch_{\mathbb{K}}^{P_{\infty}})}(H^*X,\phi)\hookrightarrow \pi_0Real_{P_{\infty}}(H^*X)
\]
where $Aut_{\mathbb{K}}(H^*X)$ stands for the group of automorphisms of $H^*X$ as a dg $\mathbb{K}$-module and
$Aut_{Ho(Ch_{\mathbb{K}}^{P_{\infty}})}(H^*X,\phi)$ stands for the group of automorphisms of $(H^*X,\phi)$ in the homotopy category of dg $P_{\infty}$-algebras.

(2) For $n\geq 1$, the $n^{th}$ homotopy groups of the realization space $Real_{P_{\infty}}(H^*X)$ are given by
\[
\pi_n(Real_{P_{\infty}}(H^*X),\phi) \cong H^{1-n}Der_{\phi}(P_{\infty},End_{H^*X}).
\]

(3) For $n\geq 2$, we have
\begin{eqnarray*}
\pi_n(Real_{P_{\infty}}(H^*X),\phi) & \cong & \pi_n(\mathcal{N}\mathcal{R}^{P_{\infty}},(H^*X,\phi))\\
 & \cong & \pi_{n-1}(L^HwCh_{\mathbb{K}}^{P_{\infty}}((H^*X,\phi),(H^*X,\phi)),id)
\end{eqnarray*}
where $L^H$ is the hammock localization functor constructed in \cite{DK2}.
\end{thm}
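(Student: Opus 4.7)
The plan is to establish parts (2) and (3) first, since they provide the homotopy-theoretic machinery needed to derive the counting statement in part (1).

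For part (2), I would observe that $Real_{P_{\infty}}(H^*X)$ is by construction a disjoint union of connected components of the full moduli Kan complex $P_{\infty}\{X\} = Map(P_{\infty}, End_X)$, so their higher homotopy groups agree at every basepoint $\phi$. The classical formula $\pi_n(P_{\infty}\{X\}, \phi) \cong H^{1-n}Der_{\phi}(P_{\infty}, End_X)$ -- which goes back to Rezk in the operadic case and was extended to props in earlier parts of the paper and in \cite{Yal3} -- then computes the higher homotopy groups. It remains to replace $End_X$ by $End_{H^*X}$. This is where the hypothesis of trivial differential on $P$ is used, together with the fact that $H^*X$ is a deformation retract of $X$ as a cochain complex: homotopy transfer produces an equivalent $P_{\infty}$-structure on $H^*X$ and a quasi-isomorphism of deformation complexes $Der_{\phi}(P_{\infty}, End_X) \simeq Der_{\phi'}(P_{\infty}, End_{H^*X})$, which gives the stated formula.

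For part (3), I would invoke Dwyer--Kan's theorem that for any model category $\mathcal{M}$, the connected component of the classifying space $B(w\mathcal{M})$ through an object $Y$ is weakly equivalent to $BAut^h_{L^H w\mathcal{M}}(Y)$, the classifying space of the derived automorphism monoid in the hammock localization. Applied to $\mathcal{M} = Ch_{\mathbb{K}}^{P_{\infty}}$ and $Y = (H^*X, \phi)$, this yields the second isomorphism. The identification of $\pi_n Real_{P_{\infty}}(H^*X)$ with $\pi_n \mathcal{N}\mathcal{R}^{P_{\infty}}$ for $n \geq 2$ would then come from comparing the two moduli spaces via the cohomology functor: the discrepancy is controlled by the set-level action of $Aut_{\mathbb{K}}(H^*X)$, which can only contribute in degrees $0$ and $1$, so higher homotopy groups agree.

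For part (1), I would use the above identifications to analyze $\pi_0 Real_{P_{\infty}}(H^*X)$. Each path component of $\mathcal{N}\mathcal{R}^{P_{\infty}}$ corresponds to a quasi-isomorphism class $[\phi]$ of $P_{\infty}$-algebra structures realizing the fixed $P$-algebra $H^*X$. Over each such class, the fiber of the natural map $\pi_0 Real_{P_{\infty}}(H^*X) \to \pi_0 \mathcal{N}\mathcal{R}^{P_{\infty}}$ is governed by the action of $Aut_{\mathbb{K}}(H^*X)$ twisting a realization by a change of cohomology isomorphism. Two twists $\sigma\cdot\phi$ and $\sigma'\cdot\phi$ define the same class in $Real$ precisely when $\sigma'\sigma^{-1}$ lifts to an automorphism of $(H^*X, \phi)$ in $Ho(Ch_{\mathbb{K}}^{P_{\infty}})$, yielding the quotient $Aut_{\mathbb{K}}(H^*X)/Aut_{Ho(Ch_{\mathbb{K}}^{P_{\infty}})}(H^*X, \phi)$ as an injection into the fiber, and summing over $[\phi]$ produces the global injection.

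The main obstacle I anticipate is the rigorous verification of the fiber description in part (1): one must show both that twisting by $\sigma$ yields a well-defined map to $\pi_0 Real$ independent of simplicial-homotopy representatives, and that the stabilizer is exactly the derived automorphism group of $(H^*X, \phi)$, not merely its underlying automorphism group or a strict version. This requires carefully threading the comparison between strict and homotopical automorphisms through the hammock localization established in part (3), and using Dwyer--Kan's description of $\pi_0$ of the mapping space in $L^H w\mathcal{M}$ as morphisms in the homotopy category.
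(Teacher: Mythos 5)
Your outline follows the paper's own strategy quite closely: decompose $Real_{P_{\infty}}(H^*X)$ into local realization spaces indexed by $\pi_0\mathcal{N}\mathcal{R}^{P_{\infty}}$, reduce from $X$ to $H^*X$ via the projection $X\twoheadrightarrow H^*X$ and the zigzag $P_{\infty}\{X\}\leftarrow P_{\infty}\{f\}\rightarrow P_{\infty}\{H^*X\}$ (the paper's Propositions 2.12--2.13, giving isomorphisms on $\pi_n$ for $n\geq 1$ and an injection on $\pi_0$), identify higher homotopy groups with cohomology of the deformation complex (the paper quotes \cite[Corollary 2.18]{Yal4}, extended from properads to props via the deformation complexes of \cite{Mar2}, rather than Rezk), and use the Dwyer--Kan identification of components of the classification space with classifying complexes of hammock-localization automorphisms for part (3). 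So the route is the same; the issue is that the step you yourself flag as ``the main obstacle'' is exactly where all the content of the proof lives, and your sketch of part (1) as a hands-on analysis of the fibers of $\pi_0Real_{P_{\infty}}(H^*X)\to\pi_0\mathcal{N}\mathcal{R}^{P_{\infty}}$ does not by itself identify the stabilizer, nor does your part (3) argument explain why the ``discrepancy'' is concentrated in degrees $0$ and $1$.

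The paper resolves this by a single construction (Theorem 2.8) from which all three parts fall out of one long exact sequence. The discrete group $Aut_{\mathbb{K}}(H^*X)$ acts simplicially on $P_{\infty}\{H^*X\}_{[\phi]}^f$ by conjugation $\varphi\mapsto g^{-1}_*\circ g^*\circ\varphi$; checking that this is a prop morphism, is simplicial, and preserves the local realization space requires first replacing zigzags of quasi-isomorphisms by zigzags of acyclic fibrations (Lemma 2.7), a point absent from your plan. The simplicial Borel construction then produces an honest Kan fibration $\pi:WAut_{\mathbb{K}}(H^*X)\times_{Aut_{\mathbb{K}}(H^*X)}P_{\infty}\{H^*X\}_{[\phi]}^f\twoheadrightarrow\overline{W}Aut_{\mathbb{K}}(H^*X)$ with fiber the local realization space, and a five-lemma comparison of fiber sequences identifies its total space with the component $diag\mathcal{N}fwCh_{\mathbb{K}}^{P\otimes\Delta^{\bullet}}|_{H^*X}$ of the classification space. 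Since the base is a $K(Aut_{\mathbb{K}}(H^*X),1)$, the long exact sequence gives: for $n\geq 2$ an isomorphism $\pi_n(P_{\infty}\{H^*X\}_{[\phi]},\phi)\cong\pi_n(\overline{W}L^HwCh_{\mathbb{K}}^{P_{\infty}}((H^*X,\phi),(H^*X,\phi)),id)$, which deloops to part (3); for $n=1$ the identification $\pi_1\cong ker(\pi_1(\pi))$; and for $n=0$ the boundary map $\partial_1$ factoring through an injection $Aut_{\mathbb{K}}(H^*X)/Im(\pi_1(\pi))\hookrightarrow\pi_0P_{\infty}\{H^*X\}_{[\phi]}$, where $Im(\pi_1(\pi))$ is identified with $Aut_{Ho(Ch_{\mathbb{K}}^{P_{\infty}})}(H^*X,\phi)$ because $\pi_1$ of the total space is $\pi_0L^HwCh_{\mathbb{K}}^{P_{\infty}}((H^*X,\phi),(H^*X,\phi))$. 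Note finally that $\partial_1$ only injects onto a subset of $\pi_0$ of the fiber (it need not be surjective when the total space is disconnected), which is why the theorem claims an injection rather than a computation of $\pi_0Real_{P_{\infty}}(H^*X)$; your phrase ``yielding the quotient \dots as an injection into the fiber'' is correct but should be justified by this exact-sequence mechanism rather than by a direct stabilizer computation.
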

Note that $L^HwCh_{\mathbb{K}}^{P_{\infty}}(-,-)$ is the simplicial monoid of homotopy automorphisms.
When $P$ is an operad, then the $P_{\infty}$-algebras form a model category and we recover
the usual simplicial monoid of self weak equivalences $haut(-)$.
To put it into words, one can compute connected components of the moduli space of realizations as quotients of automorphisms groups,
its higher homotopy groups at a given base point are isomorphic to the cohomology of the deformation complex of this base point,
and for $n\geq 2$ we recover the homotopy groups of the "Goerss-Hopkins" realization space and the homotopy groups of homotopy automorphisms.
The proof of Theorem 0.1 is as follows. We consider a restriction of the homotopy fiber sequence of \cite[Theorem 0.1]{Yal2} to local realization spaces,
we explain how we can replace it with an actual Kan fibration up to weak equivalences obtained from a certain simplicial Borel construction, and we use long exact sequences arguments.
The computation of higher homotopy groups follows from a correspondence between higher homotopy groups of mapping spaces and cohomology groups of deformation complexes.
The proof of this intermediate result relies on Lie theory for $L_{\infty}$-algebras and is fully written in \cite{Yal4}.

We then study extensions of algebraic structures. Precisely, if we have a prop morphism $O\rightarrow P$, it implies that every $P$-algebra is in particular a $O$-algebra. The realization problem is the following. The prop morphism $O\rightarrow P$ induces a prop morphism $i:O_{\infty}\rightarrow P_{\infty}$. Let $X$ be a cochain complex such that $H^*X$ is a $P$-algebra,
and suppose that $X$ possesses a $O_{\infty}$-algebra structure given by a morphism $\psi:O_{\infty}\rightarrow End_X$. The extensions of this $O_{\infty}$-algebra structure into $P_{\infty}$-algebra structures are parametrized by the homotopy fiber
\[
P_{\infty}\{X,\psi\} \rightarrow P_{\infty}\{X\}\stackrel{i^*}{\rightarrow}O_{\infty}\{X\},
\]
where $i^*$ is the map induced by precomposing with $i$ and the fiber is taken over the base point $\psi$.
Our first result in the relative setting is a generalization of the main theorem of \cite{Yal2}:
\begin{thm}
Let $(X,\psi)$ be a dg $O_{\infty}$-algebra.
Then the commutative square
\[
\xymatrix{P_{\infty}\{X,\psi\}\ar[d]\ar[r] & \mathcal{N}wCh_{\mathbb{K}}^{P_{\infty}}\ar[d]^{\mathcal{N}i^*}\\
\{X,\psi\}\ar[r] & \mathcal{N}wCh_{\mathbb{K}}^{O_{\infty}}
}
\]
is a homotopy pullback of simplicial sets.
\end{thm}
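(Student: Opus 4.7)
The plan is to deduce the statement from the main theorem of \cite{Yal2} by stacking two homotopy pullback squares. The input I would use is that for any cofibrant prop resolution $Q_{\infty}$ and any cochain complex $Y$, the square
\[
\xymatrix{Q_{\infty}\{Y\}\ar[d]\ar[r] & \mathcal{N}wCh_{\mathbb{K}}^{Q_{\infty}}\ar[d]\\
\{Y\}\ar[r] & \mathcal{N}wCh_{\mathbb{K}}}
\]
is a homotopy pullback of simplicial sets, with the right vertical map being the nerve of the forgetful functor to cochain complexes.

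First I would apply this input simultaneously to $P_{\infty}$ and to $O_{\infty}$. Because the morphism of props $i:O_{\infty}\rightarrow P_{\infty}$ is compatible with the two forgetful functors landing in $Ch_{\mathbb{K}}$, the two Yal2 squares assemble into a commutative vertical stack
\[
\xymatrix{
P_{\infty}\{X\}\ar[r]\ar[d]^{i^{*}} & \mathcal{N}wCh_{\mathbb{K}}^{P_{\infty}}\ar[d]^{\mathcal{N}i^{*}}\\
O_{\infty}\{X\}\ar[r]\ar[d] & \mathcal{N}wCh_{\mathbb{K}}^{O_{\infty}}\ar[d]\\
\{X\}\ar[r] & \mathcal{N}wCh_{\mathbb{K}}.
}
\]
The bottom square is the Yal2 homotopy pullback applied to $O_{\infty}$, and the outer rectangle is the Yal2 homotopy pullback applied to $P_{\infty}$. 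The pasting lemma for homotopy pullbacks in simplicial sets then forces the top square to be a homotopy pullback as well.

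Next I would paste this top square horizontally with the tautological homotopy pullback
\[
\xymatrix{
P_{\infty}\{X,\psi\}\ar[r]\ar[d] & P_{\infty}\{X\}\ar[d]^{i^{*}}\\
\{X,\psi\}\ar[r]^{\psi} & O_{\infty}\{X\}
}
\]
which records the very definition of $P_{\infty}\{X,\psi\}$ as the homotopy fibre of $i^{*}$ over the vertex $\psi$. The resulting outer rectangle is precisely the commutative square of the theorem, which is therefore a homotopy pullback by a second application of the pasting lemma. The whole argument is thus formal once \cite{Yal2} is in hand; the only point requiring some care will be checking that the vertical stack commutes on the nose (so that the successive pastings are legitimate) and tracking the base point $\psi\in O_{\infty}\{X\}$ together with its image $(X,\psi)\in\mathcal{N}wCh_{\mathbb{K}}^{O_{\infty}}$ consistently through the forgetful maps.
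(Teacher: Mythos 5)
Your argument is correct, but it is not the route the paper takes. The paper proves the statement directly by adapting the argument of \cite{Yal2} (itself a variant of Rezk's use of Quillen's Theorem B) to the relative setting: it considers the simplicial functor $\pi:fwCh_{\mathbb{K}}^{P_{\infty}\otimes\Delta^{\bullet}}\rightarrow cs_{\bullet}fwCh_{\mathbb{K}}^{O_{\infty}}$, identifies each $\mathcal{N}(\pi\downarrow X)_{s,\bullet}$ with a coproduct of relative moduli spaces $P_{\infty}\{Y_s\stackrel{\sim}{\twoheadrightarrow}\cdots\stackrel{\sim}{\twoheadrightarrow}Y_0,\psi_{\{Y_i\}}\}$ indexed by towers of acyclic fibrations over $X$, and verifies the hypothesis of Theorem B; the factorization $O_{\infty}\rightarrowtail\tilde{P_{\infty}}\stackrel{\sim}{\twoheadrightarrow}P_{\infty}$ is used to replace the homotopy fibre of $i^*$ by a strict fibre. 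You instead deduce the relative statement formally from two applications of the absolute theorem of \cite{Yal2} together with the pasting properties of homotopy pullbacks, and both pastings are used in a valid direction (cancellation for the vertical stack, composition for the horizontal one). The one point that genuinely needs attention --- and which you flag yourself --- is that the comparison map $Q\{X\}\rightarrow\mathcal{N}wCh_{\mathbb{K}}^{Q}$ of \cite{Yal2} is only defined through the zigzag $diag\mathcal{N}fwCh_{\mathbb{K}}^{Q\otimes\Delta^{\bullet}}\stackrel{\sim}{\rightarrow}diag\mathcal{N}wCh_{\mathbb{K}}^{Q\otimes\Delta^{\bullet}}\stackrel{\sim}{\leftarrow}\mathcal{N}wCh_{\mathbb{K}}^{Q}$, so to make your vertical stack strictly commutative you should carry out the pasting with $diag\mathcal{N}fwCh_{\mathbb{K}}^{Q\otimes\Delta^{\bullet}}$ in the right-hand column and then observe that the whole zigzag is natural in the prop $Q$ (all of its arrows are induced by inclusions of subcategories and by the cosimplicial frame), hence compatible with restriction along $i$. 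Granting that bookkeeping, your argument is a legitimate and noticeably shorter alternative; what the paper's longer computation buys is the explicit levelwise description of the comma categories, which is reused for the restriction to local realization spaces and in the proof of Theorem 4.3.
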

We then use a simplicial Borel construction to get results similar to those of the non relative context.
In particular, for local realization spaces there is a injection of connected components
\[
Aut_{Ho(Ch_{\mathbb{K}}^{O_{\infty}})}(X,\psi)/
Aut_{Ho(Ch_{\mathbb{K}}^{P_{\infty}})}(X,\phi)\hookrightarrow \pi_0P_{\infty}\{X,\psi\}_{[\phi]}.
\]

\subsection*{Results in the operad setting}

When $P_{\infty}$ is an operad, it turns out that $Real_{P_{\infty}}(H^*X)$ has a nice interpretation as the moduli space of $P_{\infty}$-algebras with underlying complex
$X$ realizing the $P$-algebra structure on $H^*X$, such that $\pi_0Real_{P_{\infty}}(H^*X)$ is the set of $\infty$-isotopy classes of such algebras ($\infty$-morphisms of $P_{\infty}$-algebras
which reduce to the identity on $X$).
Using the obstruction theory for algebras over operads developed in \cite{Hof} in terms of gamma cohomology, we get:
\begin{thm}
Let $P$ be a $\Sigma$-cofibrant graded operad with a trivial differential. Let $H^*X$ be a $P$-algebra
such that $H\Gamma^0_P(H^*X,H^*X)=H\Gamma^1_P(H^*X,H^*X)=0$. Then there is an injection
\[
Aut_{\mathbb{K}}(H^*X)/Aut_P(H^*X)\hookrightarrow \pi_0 Real_{P_{\infty}}(H^*X)
\]
and an isomorphism
\[
\pi_1( Real_{P_{\infty}}(H^*X),\phi)\cong \{id_{(H^*X,\phi)}\}.
\]
For $n\geq 2$ we have
\begin{eqnarray*}
\pi_n( Real_{P_{\infty}}(H^*X),\phi) & \cong & \pi_n(\mathcal{N}\mathcal{R}^{P_{\infty}},(H^*X,\phi)) \\
 & \cong & \pi_{n-1}(haut_{P_{\infty}}(H_{\infty}),id) \\
 & \cong & H\Gamma_P^{-n}(H^*X,H^*X).
\end{eqnarray*}
where $H_{\infty}$ is a cofibrant resolution of $H^*X$ in $P_{\infty}$-algebras.
\end{thm}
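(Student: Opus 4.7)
The plan is to deduce Theorem 0.3 by specializing Theorem 0.1 to the operadic setting and combining it with Hoffbeck's obstruction theory from \cite{Hof}. Two features of the operad case are crucial: $P_{\infty}$-algebras carry a transferred model structure, so $L^HwCh_{\mathbb{K}}^{P_{\infty}}$ is weakly equivalent to the homotopy function complex of this model category and one recovers the usual simplicial monoid $haut_{P_{\infty}}(H_{\infty})$ of self-weak-equivalences of a cofibrant replacement $H_{\infty}$; and the deformation complex $Der_{\phi}(P_{\infty},End_{H^*X})$ admits an explicit identification with a shift of Hoffbeck's gamma cohomology cochain complex.

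For part (1), I would start from the injection in Theorem 0.1(1) and identify $Aut_{Ho(Ch_{\mathbb{K}}^{P_{\infty}})}(H^*X,\phi)$ with $Aut_P(H^*X)$ using Hoffbeck's obstruction theory: since $P$ is $\Sigma$-cofibrant with trivial differential, the natural restriction from homotopy automorphisms of $(H^*X,\phi)$ to $P$-algebra automorphisms of $H^*X$ has its obstructions to existence controlled by $H\Gamma^1_P$ and its indeterminacy controlled by $H\Gamma^0_P$, so the simultaneous vanishing hypothesis makes this restriction bijective. The same vanishing of $H\Gamma^1_P$ forces every $P$-algebra structure on $H^*X$ to lift to a unique $\infty$-isotopy class of $P_{\infty}$-structure on $H^*X$, so $\pi_0\mathcal{N}\mathcal{R}^{P_{\infty}}$ collapses to a single class and the coproduct in Theorem 0.1(1) reduces to the single quotient $Aut_{\mathbb{K}}(H^*X)/Aut_P(H^*X)$.

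For the higher homotopy groups, I would combine Theorem 0.1(2)--(3) with the comparison of derivation complexes and gamma cohomology. Theorem 0.1(3) directly gives the chain $\pi_n\cong \pi_n\mathcal{N}\mathcal{R}^{P_{\infty}}\cong \pi_{n-1}(haut_{P_{\infty}}(H_{\infty}),id)$ once the hammock localization is replaced by the usual function complex of the transferred model structure. Theorem 0.1(2) gives $\pi_n\cong H^{1-n}Der_{\phi}(P_{\infty},End_{H^*X})$, and this identifies with $H\Gamma^{-n}_P(H^*X,H^*X)$ because, for $H_{\infty}$ obtained from a cofibrant bar-type resolution of $P$, the derivation complex computing the left-hand side is by construction the gamma cohomology complex of $H^*X$ with coefficients in itself, up to the shift built into Hoffbeck's indexing convention. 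The statement $\pi_1\cong\{id_{(H^*X,\phi)}\}$ follows from the same identification, since the relevant degree of the derivation complex is forced to vanish by the hypothesis $H\Gamma^0_P=H\Gamma^1_P=0$; equivalently, by the first part every self-equivalence is homotopic to the identity through a homotopy unique up to higher homotopy.

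The main obstacle is the careful bookkeeping of shifts between the derivation complex of Theorem 0.1(2), Hoffbeck's gamma cohomology indexing, and the target homotopy groups, so as to verify that the single hypothesis $H\Gamma^0_P=H\Gamma^1_P=0$ simultaneously collapses the homotopy automorphism group to the strict one, kills $\pi_1$, and yields the formula $\pi_n\cong H\Gamma^{-n}_P$ for $n\geq 2$. Once these conventions are fixed, Theorem 0.3 is a formal assembly of Theorem 0.1 with the obstruction-theoretic comparison of \cite{Hof}.
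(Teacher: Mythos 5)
Your proposal follows essentially the same route as the paper: specialize Theorem 0.1 to the operadic setting, invoke Hoffbeck's obstruction theory (vanishing of $H\Gamma^0_P$ gives injectivity and vanishing of $H\Gamma^1_P$ gives surjectivity of the comparison map $\overline{H^*}$ from homotopy automorphisms to $Aut_P(H^*X)$, and also forces $\pi_0\mathcal{N}\mathcal{R}^{P_{\infty}}=*$ so the coproduct collapses), and identify the derivation complex with the gamma cohomology complex for the higher homotopy groups. The one caveat is your first justification of $\pi_1\cong\{id_{(H^*X,\phi)}\}$: following the indexing pattern $\pi_n\cong H\Gamma^{-n}_P$, the degree relevant to $\pi_1$ would be $H\Gamma^{-1}_P$, which is not among the groups assumed to vanish, so you must rely on your alternative argument (which is the paper's), namely that $\pi_1$ is the kernel of $Aut_{Ho(Ch_{\mathbb{K}}^{P_{\infty}})}(H^*X,\phi)\rightarrow Aut_{\mathbb{K}}(H^*X)$, and this kernel is trivial because, by the injectivity of $\overline{H^*}$ under $H\Gamma^0_P=0$, all realizations of the identity are homotopic.
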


\subsection*{Applications to Poincaré duality}

We apply these results to determine equivalence classes of realizations of Poincaré duality on oriented compact manifolds.
Let $M$ be a compact oriented manifold. Then $H^*(M;\mathbb{K})$ is a particular kind of Frobenius algebra, called a special symmetric Frobenius bialgebra
(a structure which plays a prominent role in the algebraic counterpart of conformal field theory \cite{FRS2}). Let us denote by $sFrob$ the prop encoding such bialgebras, we have
\[
Aut_{\mathbb{K}}(H^*X)/Aut_{Com}(H^*X)\hookrightarrow \pi_0sFrob_{\infty}\{H^*X\}_{[\phi]}
\]
where $\phi$ is the trivial $sFrob_{\infty}$-structure $sFrob_{\infty}\stackrel{\sim}{\rightarrow}sFrob
\rightarrow End_{H^*X}$.
To put these observations into words, we prove that we can compute
connected components of the local realization space of Poincaré duality on cochains around the trivial homotopy structure as a quotient of its graded vector space automorphisms by its commutative algebra automorphisms.
We apply this to compute subsets of connected components of realizations of Poincaré duality for some examples of manifolds:
\begin{itemize}
\item{(1)} For the $n$-sphere $S^n$ we have
\[
\mathbb{Q}^*\hookrightarrow \pi_0sFrob_{\infty}\{H^*S^n\}_{[\phi]}.
\]

\item{(2)} For the complex projective space $\mathbb{C}P^n$ we have
\[
(\mathbb{Q}^*)^{\times n}\hookrightarrow \pi_0sFrob_{\infty}\{H^*\mathbb{C}P^n\}_{[\phi]}.
\]

\item{(3)}  Let $S$ be a compact oriented surface of genus $g$. We have
\[
\mathbb{Q}^*\hookrightarrow \pi_0sFrob_{\infty}\{H^*S\}_{[\phi]}.
\]

\item{(4)} For $M=\mathbb{C}P^2 \sharp \mathbb{C}P^2$, that is, the connected sum of two copies of $\mathbb{C}P^2$, we have
\[
GL_2(\mathbb{Q})/CO_2(\mathbb{Q})\times\mathbb{Q}^*\hookrightarrow \pi_0sFrob_{\infty}\{H^*M\}_{[\phi]}
\]
where $CO_2(\mathbb{Q})=\mathbb{Q}^*\ltimes O_2(\mathbb{Q})$ is the conformal orthogonal group.
\end{itemize}
This shows that there is an infinite number of such realizations with a certain subset of connected components parametrized by quotients of well known groups.
\begin{rem}
Let us note that by \cite{CMW} there is a non trivial action of the Grothendieck-Teichmüller Lie algebra
on the higher homotopy groups of realization spaces of homotopy Frobenius structures.
\end{rem}

\noindent
\textit{Organization}:
Section 1 is a reminder about props and their homotopical properties, algebras over props, and moduli spaces of such structures.
Section 2 is devoted to realizations spaces and the proof of Theorem 0.1. We define realization spaces and their decomposition into local realization spaces,
then we show how to replace the homotopy fiber sequence in the main theorem of \cite{Yal2} (restricted to local realization spaces) by an actual Kan fibration
obtained by a certain simplicial Borel construction (Theorem 2.8). From Theorem 2.8, \cite[Corollary 2.18]{Yal4} and a long exact sequence argument
we get Theorem 0.1 (Theorem 2.14 in Section 2.3). In Section 3, we treat the case of algebras over operads, for which an explicit computation of connected
components of the whole realization space as well as higher homotopy groups can be obtained in terms of the $\Gamma$-cohomology defined in \cite{Hof}.
Section 4 deals with realizations of extensions of a given algebraic structure. The results of section 2 extend to the relative case by proving Theorem 0.2
and an analogue of Theorem 2.8, from which we deduce a characterization of connected components of relative realization spaces analogue to part (1) of Theorem 0.1.
Finally, Section 5 focuses on an application of such results to the realization of Poincaré duality at the cochain level.
We prove several results about connected components of realizations of special symmetric Frobenius bialgebras, as well as extensions of a commutative algebra structure
to such a bialgebra structure, and conclude with some examples.

\medskip

\noindent
\textbf{Convention.} Throughout the text, we work in the category $Ch_{\mathbb{K}}$ of $\mathbb{Z}$-graded cochain complexes over a field $\mathbb{K}$.

\medskip

\noindent
\textbf{Funding statement.} This article was written while the author was employed as an assistant researcher at the University of Luxembourg, and the current version was completed while the author was funded by the European Union's Horizon 2020 research and innovation programme under the Marie Sklodowska-Curie grant agreement No 704589 at the University of Copenhagen.

\medskip

\noindent
\textbf{Acknowledgements.} I would like to thank David Chataur for useful discussions about realization problems, which inspired the writing of the present paper.

\section{Classification spaces and moduli spaces}

The category $Ch_{\mathbb{K}}$ of cochain complexes over a field $\mathbb{K}$ is our main working example of symmetric monoidal model category, that is, a model category equipped with a compatible symmetric monoidal structure. We assume that the reader is familiar with the basics of model categories, and we refer to Hirschhorn \cite{Hir} and Hovey \cite{Hov} for a comprehensive
treatment. We refer the reader to \cite[Chapter 4]{Hov} for the axioms of symmetric monoidal model categories
formalizing the interplay between the tensor and the model structure. Such a compatibility between the tensor product and the model category structure is crucial to make dg props inherit good homotopical properties, in particular to get well defined notions of homotopy algebra structures and moduli spaces of such structures.

\subsection{On $\Sigma$-bimodules, props and algebras over a prop}

A (differential graded, dg for short) $\Sigma$-biobject is a double sequence $\{M(m,n)\in Ch_{\mathbb{K}}\}_{(m,n)\in\mathbb{N}^{2}}$ where each $M(m,n)$ is equipped with a right action of $\Sigma_{m}$ and a left action of $\Sigma_{n}$ commuting with each other.
\begin{defn}
(1) A dg prop is a $\Sigma$-biobject endowed with associative horizontal products
\[
\circ_{h}:P(m_{1},n_{1})\otimes P(m_{2},n_{2})\rightarrow P(m_{1}+m_{2},n_{1}+n_{2}),
\]
vertical associative composition products
\[
\circ_{v}:P(k,n)\otimes P(m,k)\rightarrow P(m,n)
\]
and units $1\rightarrow P(n,n)$ neutral for both composition products.
These products satisfy the exchange law
\[
(f_1\circ_h f_2)\circ_v(g_1\circ_h g_2) = (f_1\circ_v g_1)\circ_h(f_2\circ_v g_2)
\]
and are compatible with the actions of symmetric groups.
Morphisms of props are equivariant morphisms of collections compatible with the composition products.
We denote by $\mathcal{P}$ the category of props.

(2) A prop $P$ has non-empty inputs if it satisfies
\[
P(0,n)=\begin{cases}
\mathbb{K}, & \text{if $n=0$},\\
0 & \text{otherwise}.
\end{cases}
\]
We define in a symmetric way a prop with non-empty outputs. We denote by $\mathcal{P}_0$ the full subcategory of $\mathcal{P}$ consisting of props with non-empty inputs.
\end{defn}
The following definition shows how a given prop encodes algebraic operations on the tensor powers
of a cochain complex:
\begin{defn}
(1) The endomorphism prop of a cochain complex $X$ is given by $End_{X}(m,n)=Hom_{Ch_{\mathbb{K}}}(X^{\otimes m},X^{\otimes n})$
where $Hom_{Ch_{\mathbb{K}}}$ is the internal hom bifunctor of $Ch_{\mathbb{K}}$.

(2) Let $P$ be a dg prop. A $P$-algebra is a cochain complex $X$ equipped with a prop morphism $P\rightarrow End_{X}$.
\end{defn}
Hence any ``abstract'' operation of $P$ is sent to an operation on $X$, and the way abstract operations
compose under the composition products of $P$ tells us the relations satisfied by the corresponding
algebraic operations on $X$.

There is a free-forgetful adjunction between $\Sigma$-biobjects and props, for which we refer to \cite{Fre2}, which transfer the cofibrantly generated model category structure of $\Sigma$-biobjects to the category of props:
\begin{thm}(see \cite{Fre2}, theorem 4.9)
(1) Suppose that $char(\mathbb{K})>0$. The category $\mathcal{P}_{0}$
of props with non-empty inputs (or outputs) equipped with the
classes of componentwise weak equivalences and componentwise fibrations forms a cofibrantly generated semi-model category.

(2) Suppose that $char(\mathbb{K})=0$. Then the entire category of props inherits a full cofibrantly generated model category
structure with the weak equivalences and fibrations as above.
\end{thm}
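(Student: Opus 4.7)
The plan is to transfer the cofibrantly generated model structure from $\Sigma$-biobjects to props along the free-forgetful adjunction $F : \Sigma\text{-biobj} \rightleftarrows \mathcal{P} : U$. First I would equip $\Sigma$-biobjects with their projective model structure: since a $\Sigma$-biobject is just a $\mathbb{N}^2$-indexed family of $(\Sigma_m,\Sigma_n)$-bimodules in $Ch_{\mathbb{K}}$, and each such category of bimodules admits a cofibrantly generated projective model structure with componentwise weak equivalences and fibrations, the product over $(m,n)$ inherits such a structure. Call the generating (trivial) cofibrations $I$ and $J$. The candidate transferred structure on $\mathcal{P}$ (resp.\ $\mathcal{P}_0$) has weak equivalences and fibrations detected by $U$, and generating sets $F(I)$, $F(J)$.

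Next I would verify the hypotheses of the Kan/Quillen transfer principle (in its semi-model version due to Fresse and Spitzweck). Both $\mathcal{P}$ and $\mathcal{P}_0$ are locally presentable, so the small object argument is permitted. The forgetful functor $U$ preserves filtered colimits because horizontal and vertical composition products commute with them. The crucial step is to show that for any generating trivial cofibration $j : A \to B$ in $\Sigma$-biobjects and any pushout
\[
\xymatrix{F(A)\ar[r]\ar[d] & P\ar[d]\\ F(B)\ar[r] & P'}
\]
in props, the underlying map $U(P) \to U(P')$ is a weak equivalence (and a pointwise monomorphism, so that sequential colimits along such pushouts are also weak equivalences).

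The hard part is precisely this pushout analysis, which is where the split between (1) and (2) enters. One must build an explicit filtration of $U(P')$ by subobjects indexed by the number of ``new'' cells glued in, using the combinatorial description of the free prop $F$ as a colimit over directed graphs with inputs and outputs. Each successive layer of the filtration is a pushout in $\Sigma$-biobjects whose attaching map involves coinvariants under the automorphism groups of the indexing graphs. In characteristic zero, taking $\Sigma$-coinvariants is exact, so one readily checks that each layer is a componentwise acyclic cofibration and conclude that the transfer gives a full cofibrantly generated model structure on all of $\mathcal{P}$. In positive characteristic these coinvariants fail to be homotopically well-behaved in general, but one can still control the combinatorics of graphs provided every vertex has at least one input (symmetrically, one output), i.e.\ when one works inside $\mathcal{P}_0$; this restriction keeps the graphs occurring in the filtration finite and the problematic traces absent, but only yields lifts against cofibrant sources, giving a cofibrantly generated semi-model structure.

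Finally, once the pushout-product analysis is in place, the remaining axioms are formal: the lifting axioms against fibrations come from the adjunction, the factorization axioms come from the small object argument applied to $F(I)$ and $F(J)$, and two-out-of-three plus retract closure are inherited from $Ch_{\mathbb{K}}$ via $U$. In the positive characteristic setting one has to be careful that each factorization produces a cofibration whose source is itself cofibrant, which is exactly the content of a semi-model structure. This yields statements (1) and (2) as they appear, and I would expect the filtration by graph complexity on pushouts of free prop cells to be the single genuine obstacle; once that combinatorial analysis is in hand the rest of the argument is a direct application of the transfer theorem.
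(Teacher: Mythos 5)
The paper does not actually prove this statement: it is imported verbatim from \cite{Fre2} (Theorem~4.9 there), and the only in-paper commentary is the reminder of what a semi-model category is. So there is no internal proof to compare against; the relevant comparison is with the cited source, and your sketch does follow its strategy faithfully: one transfers the componentwise projective structure on $\Sigma$-biobjects along the free--forgetful adjunction, the entire difficulty is concentrated in showing that pushouts of free cells $F(A)\to F(B)$ over a prop $P$ are componentwise (trivial) cofibrations, and this is done by filtering the pushout using the graph expansion of the free prop functor, with the characteristic dichotomy governed by whether $\Sigma$-coinvariants are exact (Maschke in characteristic zero).

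Two caveats on your write-up. First, the step you correctly isolate as the genuine obstacle is also the only step carrying mathematical content, and you have named it rather than executed it; in particular your explanation of the role of the non-empty-inputs hypothesis is loose. The point is not literally that it ``keeps the graphs finite'' in each filtration stage, but that it keeps the graph expansion of $F$ and of the relevant relative pushouts under combinatorial control, so that each layer of the filtration can be identified as a pushout of generating cells twisted by graph automorphism groups; without that hypothesis the expansion degenerates (vertices with no inputs can proliferate) and the layer-by-layer analysis breaks down even before any characteristic issue arises. Second, your description of the semi-model weakening is slightly off: as the paper itself recalls, the axioms are weakened so that lifting and factorization are only required for maps with \emph{cofibrant domain}, not that every factorization produces a cofibration with cofibrant source. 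The reason one cannot do better in positive characteristic is that the pushout analysis needs the underlying $\Sigma$-biobject of the base prop to be cofibrant in the equivariant sense, which holds automatically in characteristic zero and must be imposed (via cofibrancy of the domain) otherwise. With those two points tightened, your outline is the correct reconstruction of the cited proof.
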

\medskip{}
A semi-model category structure is a slightly weakened version of model category structure: the
lifting axioms work only with cofibrations with a cofibrant domain, and the factorization axioms work only
on a map with a cofibrant domain (see the relevant section of \cite{Fre2}).
The notion of a semi-model category is sufficient to apply the usual constructions of homotopical algebra.
Let us note that props with non-empty inputs or non-empty outputs usually encode algebraic structures without unit or without counit, for instance Lie bialgebras.

\subsection{Moduli spaces and relative moduli spaces of algebra structures over a prop}

A moduli space of algebra structures over a prop $P$, on a given cochain complex $X$, is a simplicial set whose points are
the prop morphisms $P\rightarrow End_{X}$. For the sake of brevity and clarity, we refer the reader to the chapter 16 in \cite{Hir} for a complete treatment of the notions of simplicial resolutions, cosimplicial resolutions
and Reedy model categories.

\begin{defn}
Let $P$ be a cofibrant dg prop (with non-empty inputs if $char(\mathbb{K})>0$) and $X$ be a cochain complex.
The moduli space of $P$-algebra structures on $X$ is the simplicial set alternatively defined by
\[
P\{X\} = Mor_{\mathcal{P}_{0}}(P\otimes\Delta[-],End_X).
\]
where $P\otimes\Delta[-]$ is a cosimplicial resolution of $P$ and $\mathcal{P}_{0}$ is the category of props with non-empty inputs (see Definition 1.1),
or
\[
P\{X\} = Mor_{\mathcal{P}_{0}}(P,End_X^{\Delta[-]}).
\]
where $End_X^{\Delta[-]}$ is a simplicial resolution of $End_X$.
Since every cochain complex over a field is fibrant and cofibrant, every dg prop is fibrant:
the fact that $P$ is cofibrant and $End_X$ is fibrant implies that these
two formulae give the same moduli space up to homotopy.

Let us note that if the field $\mathbb{K}$ is of characteristic zero, then one can replace $\mathcal{P}_0$ by the category $\mathcal{P}$ of all dg props in this definition.
\end{defn}
We can already get two interesting properties of these moduli
spaces:
\begin{prop}
(1) The simplicial set $P\{X\}$ is a Kan complex and its connected components give the equivalences classes
of $P$-algebra structures on $X$, i.e
\[
\pi_0P\{X\}\cong [P,End_X]_{Ho(\mathcal{P}_0)}.
\]

(2) Every weak equivalence of cofibrant props $P\stackrel{\sim}{\rightarrow}Q$ gives
rise to a homotopy equivalence of fibrant simplicial sets $Q\{X\}\stackrel{\sim}{\rightarrow}P\{X\}$.
\end{prop}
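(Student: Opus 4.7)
The plan is to reduce both parts to the standard machinery of Reedy cosimplicial resolutions and homotopy function complexes in a (semi-)model category, as developed in Hirschhorn \cite[Ch.~16-17]{Hir}. Two structural facts do all the work: by assumption $P$ (and $Q$ in part (2)) is cofibrant in $\mathcal{P}_0$, and every dg prop is fibrant (since every cochain complex over a field is fibrant and cofibrant), so in particular $End_X$ is fibrant. With these in hand, both claims become instances of general results about mapping spaces between a cofibrant source and a fibrant target.

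For part (1), I would first note that $P\otimes\Delta[-]$ can be taken to be a Reedy cofibrant cosimplicial resolution of $P$. The Hirschhorn result that $Mor_{\mathcal{M}}(A^{\bullet},B)$ is a Kan complex whenever $A^{\bullet}$ is a Reedy cofibrant cosimplicial resolution of a cofibrant object and $B$ is fibrant then directly yields that $P\{X\}$ is Kan. For the description of $\pi_0$, vertices of $P\{X\}$ are prop morphisms $P\rightarrow End_X$, and two such morphisms lie in the same connected component iff they can be linked by a chain of $1$-simplices, i.e. iff they are left homotopic along the cylinder $P\otimes\Delta[1]$; since the source is cofibrant and the target fibrant, this is exactly the homotopy relation computing $[P,End_X]_{Ho(\mathcal{P}_0)}$.

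For part (2), I would invoke functoriality of cosimplicial resolutions: a weak equivalence $f:P\stackrel{\sim}{\rightarrow}Q$ between cofibrant props induces a Reedy weak equivalence $f\otimes\Delta[-]:P\otimes\Delta[-]\rightarrow Q\otimes\Delta[-]$ between Reedy cofibrant objects. Applying $Mor_{\mathcal{P}_0}(-,End_X)$, with $End_X$ fibrant, then produces a weak equivalence of Kan complexes $Q\{X\}\stackrel{\sim}{\rightarrow}P\{X\}$ by the same homotopy function complex yoga (Hirschhorn \cite[\S 17.7]{Hir}). Part (1) has already ensured both sides are Kan, so this is really the main content of the statement.

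The main subtlety I anticipate is the weakened setting when $\mathrm{char}(\mathbb{K})>0$, where by Theorem 1.3 we only have a semi-model structure on $\mathcal{P}_0$: factorizations and lifts behave well only for maps with cofibrant source. This is harmless here, since the hypothesis explicitly asks for $P$ (and $Q$) cofibrant, so the construction of cosimplicial resolutions, the Reedy-cofibrance of $P\otimes\Delta[-]$, and the analysis of homotopies all take place entirely within the cofibrant-source regime in which the semi-model structure supplies exactly the same tools as a full model structure. No separate treatment of the two characteristic cases is needed; one simply reads Hirschhorn's proofs keeping in mind that all sources used are cofibrant by design.
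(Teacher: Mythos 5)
Your proposal is correct and follows exactly the route the paper takes: the paper's proof is a one-line citation of the properties of simplicial mapping spaces in Hirschhorn, and your argument is simply the honest expansion of that citation (Kan-ness of $Mor_{\mathcal{P}_0}(P\otimes\Delta[-],End_X)$ for $P$ cofibrant and $End_X$ fibrant, identification of $\pi_0$ with homotopy classes, and homotopy invariance of the function complex in the cofibrant variable), including the correct observation that the semi-model structure in positive characteristic causes no trouble because all sources in play are cofibrant.
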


These properties directly follow from the properties of simplicial mapping spaces in model categories \cite{Hir}.
The higher simplices of these moduli spaces encode higher simplicial homotopies between homotopies.

Now let us see how to study homotopy structures with respect to a fixed one.
Let $O\rightarrow P$ be a morphism of props inducing a morphism of cofibrant props
$O_{\infty}\rightarrow P_{\infty}$ between the corresponding cofibrant resolutions.
Let $X$ be a cochain complex, and suppose that $X$ possesses a $O_{\infty}$-algebra structure given by a morphism $\psi:O_{\infty}\rightarrow End_X$. The extensions of this $O_{\infty}$-algebra structure into $P_{\infty}$-algebra structures are parametrized by the homotopy fiber
\[
P_{\infty}\{X,\psi\} \rightarrow P_{\infty}\{X\}\stackrel{i^*}{\rightarrow}O_{\infty}\{X\},
\]
where $i^*$ is the map induced by precomposing with $i$ and the fiber is taken over the base point $\psi$. The Kan complex $P_{\infty}\{X,\psi\}$ is our relative moduli space of $P_{\infty}$-algebra structures on $X$ extending its fixed $O_{\infty}$-algebra structure $\psi$.
\begin{example}
Our main example of interest is the following. We know that the $E_{\infty}$-algebra structures on the singular
cochains classify the rational homotopy type of the considered topological space.
Let $X$ be a Poincaré duality space, then its cohomology form a Frobenius algebra (and thus a Frobenius bialgebra, see Section 5). Its singular cochains $C_*(X;\mathbb{Q})$ equipped with the higher cup products form an $E_{\infty}$-algebra quasi-isomorphic to the Sullivan model of $X$ in rational homotopy theory. Let us note $\psi:E_{\infty}\rightarrow End_{C_*(X;\mathbb{Q})}$ this $E_{\infty}$-algebra structure. This Sullivan model is a commutative dg algebra whose cohomology forms a Frobenius bialgebra, hence by \cite{LS} is quasi-isomorphic, as a commutative algebra, to a certain Frobenius bialgebra. This Frobenius bialgebra structure can then be transferred along the two quasi-isomorphisms above into a homotopy Frobenius bialgebra structure on singular cochains extending the $E_{\infty}$-structure, by using the relative transfer theorem given by Theorem 4.1 below, which means that the relative moduli space $Frob_{\infty}\{C_*(X;\mathbb{Q}),\psi\}$ is not empty.
A way to understand the homotopy Frobenius structures up to the rational homotopy type of this space is thus
to analyze $\pi_*Frob_{\infty}\{C_*(X;\mathbb{Q}),\psi\}$.
\end{example}

\subsection{Moduli spaces as homotopy fibers}

We recall from \cite{Yal2} a commutative diagram which will be crucial in the remaining part of the paper, and refer the reader to \cite{Yal2} for more details.
Let $\mathcal{P}$ a cofibrant prop, and $\mathcal{N} w(\mathcal{E}^{cf})^{\Delta[-]\otimes P}$
the bisimplicial set defined by $(\mathcal{N} w(\mathcal{E}^{cf})^{\Delta[-]\otimes P})_{m,n}=(\mathcal{N} w(\mathcal{E}^{cf})^{\Delta[n]\otimes P})_{m}$,
where the $w$ denotes the subcategory of morphisms which are weak equivalences in $\mathcal{E}$.
Then we get a diagram
\[
\xymatrix{P\{X\}\ar[r]\ar[d] & diag\mathcal{N} fw(\mathcal{E}^{cf})^{\Delta[-]\otimes P}\ar[d]\ar[r]^{\sim} & diag\mathcal{N} w(\mathcal{E}^{cf})^{\Delta[-]\otimes P} & \mathcal{N} w(\mathcal{E}^{cf})^{P}\ar[l]_-{\sim}\ar[d]\\
pt\ar[r] & \mathcal{N}(fw\mathcal{E}^{cf})\ar[rr]^{\sim} &  & \mathcal{N}(w\mathcal{E}^{cf})
},
\]
where the $fw$ denotes the subcategory of morphisms which are acyclic fibrations in $\mathcal{E}$.
The crucial point here is that the left-hand commutative square of this diagram is a homotopy pullback, implying that
we have a homotopy pullback of simplicial sets (see \cite[Theorem 0.1]{Yal2})
\[
\xymatrix{P\{X\}\ar[d]\ar[r] & \mathcal{N}(wCh_{\mathbb{K}}^P)\ar[d]\\
\{X\}\ar[r] & \mathcal{N}wCh_{\mathbb{K}}.
}
\]

\section{Characterization of realization spaces via the Borel construction}

\subsection{Fibrations and the simplicial Borel construction}

We denote by $sSet_*$ the category of pointed simplicial sets.
First recall that any Kan fibration
\[
F\stackrel{j}{\rightarrow} K \stackrel{p}{\rightarrow} L,
\]
where we suppose $(L,l_0)$ pointed and $F$ is the fiber over
this basepoint $l_0$ (the typical fiber), induces a long exact sequence
\[
...\pi_{n+1}(L,l_0)\stackrel{\partial_{n+1}}{\rightarrow} \pi_n(F,k_0)\stackrel{\pi_n(j)}{\rightarrow}\pi_n(K,k_0)
\stackrel{\pi_n(p)}{\rightarrow}\pi_n(L,l_0)\stackrel{\partial_n}{\rightarrow}...
\]
where $l_0=p(k_0)$. The boundary map $\partial_1:\pi_1(L)\rightarrow \pi_0(F)$ factors through a bijection $\pi_1(L)/Im(\pi_1(p))\cong Im(\partial_1)$, hence an injection
\[
\pi_1(L)/Im(\pi_1(p))\hookrightarrow \pi_0F
\]
which will be useful later to determine subsets of connected components of our realization spaces.
Let us note that $Im(\partial_1)$ is not in bijection with $\pi_0F$ in general but just a subset of it. By exact sequence arguments, the map $\partial_1$ is a surjection for instance when $L$ and $K$ are connected, but not anymore if $K$ is not connected.

Simplicial monoids are monoid objects in $sSet_*$ (or $sSet$).
Let $G$ be a simplicial monoid and $K$ be a simplicial set. A left action of $G$ on $K$ is a simplicial map
\begin{eqnarray*}
G_{\bullet}\times K_{\bullet} & \rightarrow & K_{\bullet} \\
(g,x) & \mapsto & g.x \\
\end{eqnarray*}
such that $g_2.(g_1.x)=(g_2g_1).x$ and $e.x=x$ where $e$ is the neutral element of $G_{\bullet}$.
One can associate to $G$ two particular simplicial sets $W_{\bullet}G$
and $\overline{W}_{\bullet}G$, the later being the classifying complex of $G$.
The set $\overline{W}_0G$ has a unique element, and $\overline{W}_nG=G_{n-1}\times G_{n-2}\times...\times G_0$
for $n>0$. We refer to \cite[Chapter IV, pages 87-88]{May} for the formulae defining the faces and the degeneracies.
The space $WG$ is the decalage of $\overline{W}G$, i.e $W_nG=\overline{W}_{n+1}G$ and the faces and degeneracies are shifted as well.
One can see that $WG=G\times_{\tau(G)}\overline{W}G$ where $\tau(G)(g_{n-1},...,g_0)=g_{n-1}$ is a twisting function
$\overline{W}G\rightarrow G$. The associated projection $p_G:WG\rightarrow\overline{W}G$ is a principal $G$-bundle
called the universal $G$-bundle, and $WG$ is contractible.

Now let $K$ be a simplicial set endowed with an action of $G$, we associate to it a twisted cartesian product
$K\times_{\tau}\overline{W}G$ where $\tau$ is a twisting function and $(K\times_{\tau}\overline{W}G)_n=K_n\times G_n$.
This twisted cartesian product actually equals to the quotient $K\times_G WG=K\times WG/((g.x,y)=(x,g.y))$.
One can construct from the principal $G$-bundle $p_G$ a $G$-bundle $p_G^*:K\times_G WG\rightarrow \overline{W}G$
of fiber $K$ defined by $p_G^*(x,y)=p_G(x)$.
By \cite[Chapter IV, Proposition 8.4]{May}, every twisted cartesian product whose fiber is a Kan complex
forms a Kan fibration, and by \cite[Chapter IV, Theorem 19.4]{May}, the $G$-bundles are twisted cartesian products,
so a $G$-bundle with a Kan complex as fiber forms a Kan fibration.
To conclude, let us recall that there is a bijective correspondence between homotopy classes of simplicial maps
$B\rightarrow \overline{W}G$ and $G$-equivalence classes of $G$-bundles of base $B$ with a fixed fiber \cite[Chapter IV, Theorem 21.13]{May}.

\subsection{Realization spaces and automorphisms groups}

The general idea is the following. Let $P$ be an arbitrary prop. We consider a cochain complex $X$ such that $H^*X$ is equipped with a $P$-algebra structure.
We would like to study the $P_{\infty}$-algebra structures $\varphi:P_{\infty}\rightarrow End_X$ on $X$ such that
$H^*(X,\varphi)\cong H^*X$ as $P$-algebras, that is the $P_{\infty}$-algebra structures realizing the $P$-algebra $H^*X$
at the cochain level.

\noindent
\textbf{Assumption}: we suppose that the differential of $P$ is trivial, since we need prop isomorphisms
$H^*P_{\infty}\cong H^*P\cong P$.

Any prop morphism $\varphi:P_{\infty}\rightarrow End_X$ induces a prop morphism $P\rightarrow End_{H^*X}$ in the following way.
By \cite[Proposition 3.4.7]{Fre1}, any symmetric monoidal dg functor $F:Ch_{\mathbb{K}}\rightarrow Ch_{\mathbb{K}}$ induces, for every $X\in Ch_{\mathbb{K}}$, a prop morphism $End_X\rightarrow End_{F(X)}$ defined for $m,n\in\mathbb{N}$ by
\[
Hom_{Ch_{\mathbb{K}}}(X^{\otimes m},X^{\otimes n}) \rightarrow Hom_{Ch_{\mathbb{K}}}(F(X^{\otimes m}),F(X^{\otimes n})) \cong Hom_{Ch_{\mathbb{K}}}(F(X)^{\otimes m},F(X)^{\otimes n}).
\]
Since $H^*(-)$ is a symmetric monoidal functor on cochain complexes, there exists
a prop morphism $\rho_X:End_X\rightarrow End_{H^*X}$. We thus obtain a prop morphism
\[
P_{\infty}\stackrel{\varphi}{\rightarrow} End_X\stackrel{\rho_X}{\rightarrow} End_{H^*X}
\]
and apply the cohomology functor aritywise
\[
H^*(\rho_X\circ\varphi):P\cong H^*P_{\infty}\rightarrow H^*End_{H^*X}=End_{H^*X}.
\]

A way to encode these structures, their equivalences classes and higher homotopies is to construct a realization space
$Real_{P_{\infty}}(H^*X)$ which will be a certain Kan subcomplex of $P_{\infty}\{X\}$.
For this aim we need the following lemma:
\begin{lem}
Let $K$ be a Kan complex. Let $L_0\subset K_0$ be a subset of the vertices of $K$.
There exists a unique Kan subcomplex $L\subset K$, constructed by induction on the simplicial dimension,
such that for any $n\geq 0$, the $n$-simplices of $L$ are the $n$-simplices of $K$ whose faces belong to $L_{n-1}$.
\end{lem}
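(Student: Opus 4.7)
The plan is to construct $L$ explicitly by induction on simplicial dimension, then verify successively that it is a simplicial subset, that it is Kan, and that it is uniquely characterized by the stated property.

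For the construction, set $L_0$ equal to the given subset, and for $n\geq 1$ define
\[
L_n = \{\sigma\in K_n \mid d_i\sigma\in L_{n-1} \text{ for every } 0\leq i\leq n\}.
\]
Face maps clearly restrict to $L$ by definition, so the only point to check in order to get a simplicial subset is that degeneracies send $L_{n-1}$ into $L_n$. Given $\tau\in L_{n-1}$ and a degeneracy $s_i\tau$, one computes $d_j s_i\tau$ using the simplicial identities: it is either $\tau$ itself (when $j\in\{i,i+1\}$), or $s_{i-1}d_j\tau$, or $s_id_{j-1}\tau$. The first case lies in $L_{n-1}$ by hypothesis on $\tau$; the other two involve $d_{\bullet}\tau\in L_{n-2}$ and therefore lie in $L_{n-1}$ by the inductive assumption that degeneracies of $L_{n-2}$ are already in $L_{n-1}$. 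This gives $s_i\tau\in L_n$.

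For the Kan property, consider a horn $\Lambda^n_k\to L$, i.e.\ $n$-simplices $\sigma_0,\dots,\widehat{\sigma_k},\dots,\sigma_n\in L_{n-1}$ satisfying the horn compatibility. Since $K$ is Kan, we can fill the horn to some $\sigma\in K_n$ with $d_i\sigma=\sigma_i$ for $i\neq k$. To conclude $\sigma\in L_n$ it suffices to check that the missing face $d_k\sigma$ also lies in $L_{n-1}$, which by the defining property reduces to checking that every face $d_j d_k\sigma$ lies in $L_{n-2}$. Using the simplicial relation $d_jd_k=d_{k-1}d_j$ for $j<k$ and $d_jd_k=d_kd_{j+1}$ for $j\geq k$, each such face equals a face of some $\sigma_i=d_i\sigma$ with $i\neq k$; since $\sigma_i\in L_{n-1}$, its faces indeed lie in $L_{n-2}$. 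Thus $d_k\sigma\in L_{n-1}$ and $\sigma\in L_n$. Note that the argument needs at least two horn faces so that, given any $j$, a usable $\sigma_i$ with $i\neq k$ is available; for $n=1$ this is handled separately using only that $L_0$ is nonempty when there is anything to fill.

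Uniqueness is a short induction: if $L'\subset K$ is any Kan subcomplex with $L'_0=L_0$ and whose $n$-simplices are exactly those of $K$ with all faces in $L'_{n-1}$, then $L'_n=L_n$ follows immediately from $L'_{n-1}=L_{n-1}$. The main subtlety in the whole argument is the Kan verification, specifically the need to argue that the \emph{missing} face of a lifted horn filler automatically lies in $L_{n-1}$; once one recognises that its faces can be expressed via the simplicial identities as faces of other faces of $\sigma$ that are already in $L_{n-1}$, the induction closes.
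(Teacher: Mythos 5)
Your proof is correct and follows essentially the same route as the paper's: the same inductive definition of $L_n$, the same simplicial-identity check that degeneracies restrict, and the same strategy of filling horns in $K$ and observing that the filler's missing face automatically lands in $L$. In fact your Kan verification is more complete than the paper's, which only treats $\Lambda^2$ and asserts the general case; your use of $d_jd_k=d_{k-1}d_j$ (resp.\ $d_jd_k=d_kd_{j+1}$) to express the faces of the missing face as faces of the given horn faces is the honest general argument, though for $n=1$ you should say explicitly that the degenerate edge $s_0v$ on the horn's vertex $v\in L_0$ provides a filler lying in $L_1$, rather than merely invoking nonemptiness of $L_0$.
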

\begin{proof}
We define the simplices of $L$ by induction on the simplicial dimension:
\[
L_1=\{k\in K_1|d_0k,d_1k\in L_0\}
\]
...
\[
L_n=\{k\in K_n|\forall i, d_ik\in L_{n-1}\}.
\]
By construction, the faces of $K$ restrict to $L$. We now check by induction on the simplicial dimension $n$ that the degeneracies of $K$ also restrict to $L$.
For $n=0$, let $x$ be a vertex of $L$. There is only one degeneracy $s_0(x)\in K_1$, and by the simplicial identities we have
\[
(d_0\circ s_0)(x)=x \in L_0
\]
and
\[
(d_1\circ s_0)(x)=x \in L_0,
\]
so $s_0(x)\in L_1$ since its faces belong to $L_0$.
Now, let us suppose that there exists a positive integer $n$ such that the degeneracies $s_j:K_{n-1}\rightarrow K_n$ restrict to maps $s_j:L_{n-1}\rightarrow L_n$.
Let $\sigma\in L_n$ be a $n$-simplex of $L$ and $0\leq j\leq n$ be a fixed integer.
For $i=j$ and $i=j+1$, we have
\[
(d_i\circ s_j)(\sigma)=\sigma\in L_n.
\]
For $i<j$ we have
\[
(d_i\circ s_j)(\sigma)=(s_{j-1}\circ d_i)(\sigma)
\]
which belongs to $L_n$, because by definition of $L_n$ the $(n-1)$-simplex $d_i(\sigma)$ belongs to $L_{n-1}$, and by induction hypothesis
$s_{j-1}:K_{n-1}\rightarrow K_n$ restricts to $s_{j-1}:L_{n-1}\rightarrow L_n$.
For $i>j+1$ we have
\[
(d_i\circ s_j)(\sigma)=(s_j\circ d_{i-1})(\sigma)
\]
which belongs to $L_n$ by the same arguments.
All the faces of $s_j(\sigma)$ belongs to $L_n$, so $s_j(\sigma)$ belongs to $L_{n+1}$.

Regarding the Kan condition, it is sufficient to check it in dimension $2$ (it works similarly in higher dimensions,
given the inductive construction of $L$). The horn $\Lambda^2$ consists in two $1$-simplices having one common vertex
\[
\xymatrix{
 & 1 \\ 2 \ar@{-}[ur]^{21} \ar@{-}[dr]_{23} & \\ & 3
}
\]
Consider the image of $\Lambda^2$ in $L$ under a given simplicial map.
Since $L$ is included into a Kan complex $K$, one can fill this horn in $K$, choosing a $1$-simplex $13$ relating
the simplices $1$ and $3$ and a $2$-simplex $123$ filling the triangle.
Now, since $1$ and $3$ are in $L$, by definition $13$ also lies in $L$. Given that $21$, $23$ and $13$
are $1$-simplices of $L$, by construction $123$ is a $2$-simplex of $L$ and the horn is filled in $L$.
\end{proof}
We apply this construction to the Kan complex $P_{\infty}\{X\}$, by choosing as a set of vertices
the $P_{\infty}$-algebra structures $P_{\infty}\rightarrow End_X$ realizing $P\rightarrow End_{H^*X}$.
We obtain the desired Kan complex $Real_{P_{\infty}}(H^*X)$.

\subsubsection{Local realization spaces}

The reader will notice that our definition of the realization space is not the same as the one given for instance in \cite{GH},
where it is defined as the simplicial nerve of a category whose objects are the $P_{\infty}$-algebras with a cohomology isomorphic
to $H^*X$ as $P$-algebras, and morphisms are quasi-isomorphisms of $P_{\infty}$-algebras.
One the one hand, we restrict ourselves to the $P_{\infty}$-algebras having $X$ as underlying complex.
On the other hand, according to \cite{Fre2}, if two prop morphisms $\varphi_1,\varphi_2:P_{\infty}\rightarrow End_X$
are homotopic then $(X,\phi_1)$ and $(X,\phi_2)$ are related by a zigzag of quasi-isomorphisms of $P_{\infty}$-algebras.
But the converse is in general not true, so our realization space detects finer homotopical informations about the
$P_{\infty}$-algebra structures on $X$ than the one of \cite{GH}.
This difference is precisely measured by subspaces of $Real_{P_{\infty}}(H^*X)$ that we call local realization spaces,
which also provide us a decomposition of $Real_{P_{\infty}}(H^*X)$ into more handable pieces.
\begin{defn}
Let $\phi:P_{\infty}\rightarrow End_X$ be a prop morphism realizing a fixed structure $P\rightarrow End_{H^*X}$.
The space of local realizations of $H^*X$ at $\phi$ is the Kan subcomplex of $P_{\infty}\{X\}$ denoted by
$P_{\infty}\{X\}_{[\phi]}$ defined by
\[
(P_{\infty}\{X\}_{[\phi]})_0=\{\varphi:P_{\infty}\rightarrow End_X|\exists (X,\varphi)\stackrel{\sim}{\leftarrow}
\bullet\stackrel{\sim}{\rightarrow} (X,\phi)\in Ch_{\mathbb{K}}^{P_{\infty}}\}
\]
where the zigzags are of any finite length.
The higher simplices are defined by the construction explained previously.
\end{defn}

\begin{lem}
A zigzag of $P_{\infty}$-algebras $(X,\varphi_X)\stackrel{\sim}{\leftarrow}\bullet\stackrel{\sim}{\rightarrow}(Y,\varphi_Y)$
induces an isomorphism of $P$-algebras $H^*X\cong H^*Y$.
\end{lem}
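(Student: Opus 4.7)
The plan is to reduce the statement to the case of a single quasi-isomorphism and then concatenate. If I can show that any quasi-isomorphism $f:(A,\varphi_A) \to (B,\varphi_B)$ of $P_{\infty}$-algebras induces an isomorphism of $P$-algebras $H^*A \cong H^*B$ (with respect to the structures $H^*(\rho_A\circ\varphi_A)$ and $H^*(\rho_B\circ\varphi_B)$ recalled just before Lemma 2.1), then a zigzag of any finite length is handled by a straightforward induction, inverting on cohomology the arrows that point the ``wrong'' way.

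For a single quasi-isomorphism $f$, the definition of a morphism of $P_{\infty}$-algebras in the prop setting says that for every $p \in P_{\infty}(m,n)$ the square
\[
\xymatrix{
A^{\otimes m} \ar[r]^{\varphi_A(p)} \ar[d]_{f^{\otimes m}} & A^{\otimes n} \ar[d]^{f^{\otimes n}} \\
B^{\otimes m} \ar[r]_{\varphi_B(p)} & B^{\otimes n}
}
\]
commutes. I would apply cohomology arity by arity, and use the K\"unneth isomorphism $H^*(C^{\otimes k}) \cong (H^*C)^{\otimes k}$ (which is natural and $\Sigma_k$-equivariant since $\mathbb{K}$ is a field, hence $H^*(f^{\otimes k})$ identifies with $(H^*f)^{\otimes k}$). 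This produces an analogous commutative square with vertical maps $(H^*f)^{\otimes m}, (H^*f)^{\otimes n}$ and horizontal maps $H^*\varphi_A(p), H^*\varphi_B(p)$. By construction of $\rho_A$ and $\rho_B$ from the symmetric monoidal structure of $H^*$, these horizontal maps are precisely the components at the class $[p] \in H^*P_{\infty} \cong P$ of the induced $P$-algebra structures $H^*(\rho_A \circ \varphi_A)$ and $H^*(\rho_B \circ \varphi_B)$.

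Since $f$ is a quasi-isomorphism, $H^*f$ is a bijection of graded vector spaces; together with the compatibility squares above, this promotes it to an isomorphism of $P$-algebras. The main point that requires a moment of care is the identification, in $End_{H^*A}(m,n)$, of $H^*(\rho_A \circ \varphi_A)[p]$ with $H^*(\varphi_A(p))$: this follows from the definition of $\rho_A$ as the prop morphism induced by the symmetric monoidal functor $H^*$ (see \cite[Proposition 3.4.7]{Fre1}), combined with the fact that the differential on $End_{H^*A}$ is trivial so that $H^*$-classes in $End_{H^*A}$ coincide with actual morphisms. Beyond this bookkeeping, the rest is the elementary composition argument along the zigzag sketched in the first paragraph.
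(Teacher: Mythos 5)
Your proof is correct, and it reaches the same conclusion by unpacking the paper's argument rather than by a fundamentally new idea: both proofs ultimately rest on the fact that $H^*$ is a symmetric monoidal functor (K\"unneth over a field) and therefore induces prop morphisms $End \rightarrow End_{H^*}$. The difference is in the packaging. The paper treats the whole zigzag as a single object $F$ of a diagram category $Ch_{\mathbb{K}}^I$, applies the symmetric monoidal functor $H^*$ at the level of diagrams to get $P \rightarrow End_{H^*F}$ in one shot, and the only technical point it must address is that the endomorphism prop of a diagram built from the external hom of $Ch_{\mathbb{K}}^I$ agrees with Fresse's endomorphism prop used to encode $P_{\infty}$-structures on diagrams. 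You instead reduce to a single arrow and argue arity by arity: a strict morphism of $P_{\infty}$-algebras is exactly a map $f$ for which $(\varphi_A(p),\varphi_B(p))$ lies in $End_f(m,n)$ for every $p$, i.e.\ your square commutes on the nose, and applying $H^*$ plus K\"unneth shows that $H^*f$ intertwines the induced operations. Your version is more elementary and sidesteps the comparison of endomorphism props of diagrams (at the cost of doing that comparison implicitly for $I=\{\bullet\rightarrow\bullet\}$ via $End_f$); the paper's version handles arbitrary diagram shapes uniformly, which it reuses elsewhere (e.g.\ for realizations of diagrams of algebras). Two points worth making explicit in your write-up: the operations of $P\cong H^*P_{\infty}$ on $H^*A$ are indexed by classes $[p]$, so you should note that cohomologous cocycles $p,p'$ give cohomologous $\varphi_A(p),\varphi_A(p')$ in $End_A$ and hence the same map on cohomology (immediate since $\varphi_A$ and $\rho_A$ are chain maps); and the arrows of the zigzag are \emph{strict} morphisms in $Ch_{\mathbb{K}}^{P_{\infty}}$, which is what licenses the strict commutativity of your square rather than commutativity up to homotopy.
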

\begin{proof}
Let us consider such a zigzag $(X,\varphi_X)\stackrel{\sim}{\leftarrow}\bullet\stackrel{\sim}{\rightarrow}(Y,\varphi_Y)$.
The $P_{\infty}$-algebra structure on the whole diagram is encoded by a prop morphism
\[
P_{\infty}\rightarrow End_{X\stackrel{\sim}{\leftarrow}\bullet\stackrel{\sim}{\rightarrow}Y}.
\]
For every small category $I$, the cohomology defines obviously a functor $H^*:Ch_{\mathbb{K}}^I\rightarrow Ch_{\mathbb{K}}^I$
which associates to a functor $F:I\rightarrow Ch_{\mathbb{K}}$ the composite $H^*\circ F$.
This functor is symmetric monoidal, since the tensor product on diagrams is defined pointwise.
Moreover, the category of diagrams $Ch_{\mathbb{K}}^I$ is a symmetric monoidal category over $Ch_{\mathbb{K}}$.
Hence for any $F\in Ch_{\mathbb{K}}^I$ one can form the prop morphism $\rho_F:End_F\rightarrow End_{H^*F}$,
and any prop morphism $\varphi:P_{\infty}\rightarrow End_F$ induces the prop morphism $H^*(\rho_F\circ\varphi):
P\rightarrow End_{H^*F}$. The conclusion follows from the fact that the endomorphism prop of $F$ that we consider in this proof,
defined by using the external hom of the diagram category $Ch_{\mathbb{K}}^I$, is exactly the same prop as
the endomorphism prop of diagrams in the sense of \cite{Fre2} which is used to encode $P_{\infty}$-algebra structures
on diagrams. We apply this argument to the particular case of
$(X,\varphi_X)\stackrel{\sim}{\leftarrow}\bullet\stackrel{\sim}{\rightarrow}(Y,\varphi_Y)$.
\end{proof}

\begin{cor}
The local realization space $P_{\infty}\{X\}_{[\phi]}$ is a Kan subcomplex of the realization space $Real_{P_{\infty}}(H^*X)$.
\end{cor}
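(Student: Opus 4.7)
The plan is to reduce everything to a vertex-level containment and then invoke the uniqueness statement of Lemma 2.3. Both $P_{\infty}\{X\}_{[\phi]}$ and $Real_{P_{\infty}}(H^*X)$ are, by construction, the Kan subcomplexes of $P_{\infty}\{X\}$ produced by Lemma 2.3 from specified vertex sets; thus it suffices to prove the inclusion of vertex sets
\[
(P_{\infty}\{X\}_{[\phi]})_0 \subseteq (Real_{P_{\infty}}(H^*X))_0
\]
since the inductive formula $L_n = \{k \in K_n \mid \forall i,\ d_ik \in L_{n-1}\}$ of Lemma 2.3, applied inside the ambient Kan complex $P_{\infty}\{X\}$, is monotone in the vertex set. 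Moreover the uniqueness statement of Lemma 2.3 guarantees that applying the procedure to $(P_{\infty}\{X\}_{[\phi]})_0$ inside $Real_{P_{\infty}}(H^*X)$ or inside $P_{\infty}\{X\}$ produces the same simplicial subset, so $P_{\infty}\{X\}_{[\phi]}$ will automatically sit inside $Real_{P_{\infty}}(H^*X)$ as a Kan subcomplex.

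So the only real content is the vertex-level inclusion. Let $\varphi$ be a vertex of $P_{\infty}\{X\}_{[\phi]}$; by definition there exists a finite zigzag of quasi-isomorphisms of $P_{\infty}$-algebras
\[
(X,\varphi)\stackrel{\sim}{\leftarrow}\bullet\stackrel{\sim}{\rightarrow}\cdots\stackrel{\sim}{\leftarrow}\bullet\stackrel{\sim}{\rightarrow}(X,\phi).
\]
By iterated application of Lemma 2.4, this zigzag induces an isomorphism $H^*(X,\varphi)\cong H^*(X,\phi)$ as $P$-algebras. Since $\phi$ realizes the fixed $P$-algebra structure on $H^*X$, i.e.\ $H^*(X,\phi)\cong H^*X$ as $P$-algebras, we conclude $H^*(X,\varphi)\cong H^*X$ as $P$-algebras, so $\varphi$ also realizes $H^*X$ and is thus a vertex of $Real_{P_{\infty}}(H^*X)$.

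I do not expect any genuine obstacle here: the statement is essentially a formal corollary of Lemma 2.3 (uniqueness and monotonicity of the subcomplex attached to a vertex set) and Lemma 2.4 (invariance of the induced $P$-algebra structure on cohomology under quasi-isomorphism zigzags of $P_{\infty}$-algebras). The only mildly delicate point worth checking is that Lemma 2.4 iterates over a zigzag of arbitrary finite length, but this is immediate by composing the isomorphisms of $P$-algebras obtained step-by-step.
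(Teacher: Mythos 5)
Your proof is correct and follows exactly the route the paper intends: the corollary is stated as an immediate consequence of the preceding two lemmas, with the vertex-level inclusion supplied by Lemma 2.4 (whose proof already handles diagrams over an arbitrary small category $I$, so zigzags of any finite length are covered directly) and the passage to higher simplices supplied by the inductive construction and uniqueness in Lemma 2.3. Your explicit remark that the construction of Lemma 2.3 is monotone in the vertex set is a useful articulation of what the paper leaves implicit.
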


We could define more generally $P_{\infty}\{X\}_{[\phi]}$ for any morphism $\phi$, it will be then the local realization space
of $H^*(\rho_X\circ\phi)$ at $\phi$.

Now we intend to prove that moduli spaces and realization spaces decompose in a disjoint union of these local realization spaces.
\begin{lem}
Let $\phi,\phi':P_{\infty}\rightarrow End_X$ be two prop morphisms. We denote by $[X,\phi]$ and $[X,\phi']$ the weak equivalence
classes respectively of $(X,\phi)$ and $(X,\phi')$ in $P_{\infty}$-algebras. We have

(1) $P_{\infty}\{X\}_{[\phi]}=P_{\infty}\{X\}_{[\phi']}$ or $P_{\infty}\{X\}_{[\phi]}\bigcap P_{\infty}\{X\}_{[\phi']}=\emptyset$;

(2) $P_{\infty}\{X\}_{[\phi]}=P_{\infty}\{X\}_{[\phi']}$ if and only if $[X,\phi]=[X,\phi']$.
\end{lem}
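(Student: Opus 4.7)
The plan is to reduce both claims to the fact that ``being connected by a zigzag of quasi-isomorphisms of $P_{\infty}$-algebras'' is an equivalence relation on the set of vertices of $P_{\infty}\{X\}$, and then invoke Lemma 2.1 to conclude that the Kan subcomplex $P_{\infty}\{X\}_{[\phi]}$ is entirely determined by its vertex set.

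First I would verify that the relation $\varphi \sim \phi$ defined by the existence of a finite zigzag
\[
(X,\varphi)\stackrel{\sim}{\leftarrow}\bullet\stackrel{\sim}{\rightarrow}\cdots\stackrel{\sim}{\leftarrow}\bullet\stackrel{\sim}{\rightarrow}(X,\phi)
\]
of quasi-isomorphisms in $Ch_{\mathbb{K}}^{P_{\infty}}$ is an equivalence relation: reflexivity is given by the identity zigzag, symmetry by reversing the direction of a zigzag, and transitivity by concatenating two zigzags at their common endpoint. Thus the relation partitions the vertex set of $P_{\infty}\{X\}$ into equivalence classes, and the vertex set of $P_{\infty}\{X\}_{[\phi]}$ is precisely the class of $\phi$.

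For part (1), suppose there exists a vertex $\varphi_0 \in (P_{\infty}\{X\}_{[\phi]})_0 \cap (P_{\infty}\{X\}_{[\phi']})_0$. Then $\varphi_0 \sim \phi$ and $\varphi_0 \sim \phi'$, so by transitivity $\phi \sim \phi'$, and consequently any $\varphi \sim \phi$ also satisfies $\varphi \sim \phi'$. This proves the equality $(P_{\infty}\{X\}_{[\phi]})_0=(P_{\infty}\{X\}_{[\phi']})_0$ of vertex sets. Since by Lemma 2.1 the Kan subcomplex of $P_{\infty}\{X\}$ with a prescribed vertex set is uniquely determined by this vertex set (the higher simplices being those of $P_{\infty}\{X\}$ whose faces lie in the prescribed set), we conclude $P_{\infty}\{X\}_{[\phi]}=P_{\infty}\{X\}_{[\phi']}$. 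Otherwise the vertex sets are disjoint, and hence so are the two Kan subcomplexes.

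For part (2), the forward direction follows by noting that $\phi$ is itself a vertex of $P_{\infty}\{X\}_{[\phi]}$ (by reflexivity), so if this complex equals $P_{\infty}\{X\}_{[\phi']}$ then $\phi \sim \phi'$, i.e.\ $[X,\phi]=[X,\phi']$. Conversely, if $[X,\phi]=[X,\phi']$, then for every vertex $\varphi$ with $\varphi \sim \phi$ we have $\varphi \sim \phi'$ by transitivity, so the vertex sets coincide, and by the same application of Lemma 2.1 as above the Kan subcomplexes agree. The main subtlety, and essentially the only nontrivial point, is ensuring that one really can concatenate zigzags through a common $P_{\infty}$-algebra vertex; this is immediate once one recalls that the zigzags in question are in the category $Ch_{\mathbb{K}}^{P_{\infty}}$ and are allowed to have arbitrary finite length, as specified in Definition 2.2.
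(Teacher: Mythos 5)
Your proof is correct and follows essentially the same route as the paper: observe that the zigzag relation is an equivalence relation so the vertex sets of the two local realization spaces either coincide or are disjoint, and then invoke the uniqueness of the inductive construction of a Kan subcomplex from its vertex set (Lemma 2.1) to promote the statement from vertices to all simplices. No gaps; the extra detail you supply on reflexivity, symmetry, and concatenation is exactly what the paper leaves implicit.
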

\begin{proof}
(1) Suppose that $P_{\infty}\{X\}_{[\phi]}\bigcap P_{\infty}\{X\}_{[\phi']}\neq\emptyset$.
Let $\varphi$ be a common vertex of $P_{\infty}\{X\}_{[\phi]}$ and $P_{\infty}\{X\}_{[\phi']}$,
there exists two zigzags $(X,\varphi)\stackrel{\sim}{\leftarrow}\bullet\stackrel{\sim}{\rightarrow}(X,\phi)$
and $(X,\varphi)\stackrel{\sim}{\leftarrow}\bullet\stackrel{\sim}{\rightarrow}(X,\phi')$,
hence a zigzag $(X,\phi)\stackrel{\sim}{\leftarrow}\bullet\stackrel{\sim}{\rightarrow}(X,\phi')$.
This zigzag implies, by definition, that $P_{\infty}\{X\}_{[\phi]}$ and $P_{\infty}\{X\}_{[\phi]}$ share
the same set of vertices. The inductive construction of $P_{\infty}\{X\}_{[\phi]}$ and $P_{\infty}\{X\}_{[\phi']}$
then implies that they share the same set of simplices in each dimension.

(2) Obvious by using (1).
\end{proof}
Let us denote by $\mathcal{N}$ the simplicial nerve functor, $\mathcal{N}wCh_{\mathbb{K}}^{P_{\infty}}$ the classification
space of $P_{\infty}$-algebras with respect to quasi-isomorphisms, and $\mathcal{N}\mathcal{R}^{P_{\infty}}$ a version of the realization
space in the sense of \cite{GH} restricted to the complex $X$. That is, the nerve of the category $\mathcal{R}^{P_{\infty}}$ whose
objects are $P_{\infty}$-algebras $(X,\phi)$ with underlying complex $X$ such that $H^*(X,\phi)\cong H^*X$ as $P$-algebras and morphisms are
weak equivalences of $P_{\infty}$-algebras.
We deduce from the lemma above:
\begin{prop}
The moduli space $P_{\infty}\{X\}$ decomposes into
\[
\coprod_{[X,\phi]\in\pi_0\mathcal{N}wCh_{\mathbb{K}}^{P_{\infty}}} P_{\infty}\{X\}_{[\phi]}
\]
The realization space $Real_{P_{\infty}}(H^*X)$ decomposes into
\[
\coprod_{[X,\phi]\in\pi_0\mathcal{N}\mathcal{R}^{P_{\infty}}} P_{\infty}\{X\}_{[\phi]}
\]
\end{prop}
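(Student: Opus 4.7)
The plan is to deduce both decompositions directly from the preceding equal-or-disjoint lemma for local realization spaces, together with the inductive description of $P_{\infty}\{X\}_{[\phi]}$ coming from the Kan-subcomplex construction. First, I would note that the family $\{P_{\infty}\{X\}_{[\phi]}\}_\phi$ already covers $P_{\infty}\{X\}$ at the level of $0$-simplices, since any prop morphism $\phi:P_{\infty}\rightarrow End_X$ belongs to $P_{\infty}\{X\}_{[\phi]}$ via the constant zigzag at $(X,\phi)$. The equal-or-disjoint lemma then asserts that two such subcomplexes, associated to vertices $\phi$ and $\phi'$, either coincide or share no vertex, and that the former happens if and only if $[X,\phi]=[X,\phi']$. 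Since the connected components of $\mathcal{N}wCh_{\mathbb{K}}^{P_{\infty}}$ are by definition the zigzag-equivalence classes of $P_{\infty}$-algebras, the $0$-simplices of $P_{\infty}\{X\}$ are partitioned by the subset of $\pi_0\mathcal{N}wCh_{\mathbb{K}}^{P_{\infty}}$ consisting of classes of algebras with underlying complex $X$, which is precisely the indexing set of the first coproduct.

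The next step is to upgrade this vertex-level partition to a decomposition of the entire simplicial set. By the inductive construction of $P_{\infty}\{X\}_{[\phi]}$ from its $0$-simplices, a simplex $\sigma$ of $P_{\infty}\{X\}$ lies in $P_{\infty}\{X\}_{[\phi]}$ if and only if all of its vertices do. Hence I only have to show that the vertices of any $n$-simplex of $P_{\infty}\{X\}$ belong to a single zigzag class. Because the $1$-skeleton of $\Delta[n]$ is connected, it is enough to consider two vertices $\phi,\phi'$ joined by a $1$-simplex of $P_{\infty}\{X\}$. Under either of the two equivalent descriptions of the moduli space from Definition~1.4, such a $1$-simplex is precisely a simplicial homotopy between the prop morphisms $\phi$ and $\phi'$, and I would invoke the Fresse criterion recalled just before the definition of local realization spaces: homotopic prop morphisms produce $P_{\infty}$-algebras related by a zigzag of quasi-isomorphisms, forcing $[X,\phi]=[X,\phi']$. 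This translation from simplicial homotopies in the mapping space to weak-equivalence zigzags of algebras is the only nontrivial input and hence the main obstacle; everything else is set-theoretic bookkeeping.

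For the realization space decomposition, I would proceed by restriction. The preceding lemma showing that a zigzag of quasi-isomorphisms of $P_{\infty}$-algebras induces an isomorphism of the $P$-algebra structures in cohomology implies that the condition of realizing the fixed $P$-algebra on $H^*X$ is invariant under the zigzag relation. Consequently, as soon as one vertex of $P_{\infty}\{X\}_{[\phi]}$ lies in $Real_{P_{\infty}}(H^*X)$, the whole local realization space is contained in $Real_{P_{\infty}}(H^*X)$. Intersecting the first decomposition with $Real_{P_{\infty}}(H^*X)$ therefore retains exactly those classes $[X,\phi]$ for which $H^*(X,\phi)\cong H^*X$ as $P$-algebras, namely the elements of $\pi_0\mathcal{N}\mathcal{R}^{P_{\infty}}$, and yields the second coproduct.
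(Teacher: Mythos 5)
Your argument is correct and follows the same route the paper intends: the paper's proof is just the remark ``we deduce from the lemma above,'' and your write-up supplies exactly the missing details --- coverage of the vertices, disjointness via the equal-or-disjoint lemma, propagation to higher simplices through the inductive Kan-subcomplex construction together with Fresse's fact that homotopic prop morphisms yield zigzag-equivalent $P_{\infty}$-algebras, and restriction to $Real_{P_{\infty}}(H^*X)$ via the lemma on cohomology of zigzags. No changes needed.
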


Before showing how to compute the homotopy groups of these local realization spaces, let us point out that in the definition
of these spaces, we could have replaced the zigzags of quasi-isomorphisms by zigzags of acyclic fibrations.
This technical point will be useful in the next proofs:
\begin{lem}
Let $P_{\infty}\{X\}_{[\phi]}^f$ be the Kan subcomplex of the realization space defined by
\[
(P_{\infty}\{X\}_{[\phi]})_0=\{\varphi:P_{\infty}\rightarrow End_X|\exists (X,\varphi)\stackrel{\sim}{\twoheadleftarrow}
\bullet\stackrel{\sim}{\twoheadrightarrow} (X,\phi)\in Ch_{\mathbb{K}}^{P_{\infty}}\}
\]
where the zigzags are of any finite length. We have
\[
P_{\infty}\{X\}_{[\phi]}^f=P_{\infty}\{X\}_{[\phi]}.
\]
\end{lem}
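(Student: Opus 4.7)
The inclusion $P_{\infty}\{X\}_{[\phi]}^f \subseteq P_{\infty}\{X\}_{[\phi]}$ is immediate, since every acyclic fibration is in particular a quasi-isomorphism, so any zigzag of acyclic fibrations is a zigzag of quasi-isomorphisms. For the reverse inclusion, I first observe that by the inductive construction of Lemma~2.3 (applied to the Kan complex $P_{\infty}\{X\}$ with the two candidate vertex sets), both subcomplexes are completely determined by their $0$-simplices. Hence it suffices to show the two vertex sets coincide, i.e.\ that whenever $(X,\varphi)$ and $(X,\phi)$ are connected by a zigzag of quasi-isomorphisms of $P_{\infty}$-algebras, they are also connected by a zigzag of acyclic fibrations.

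The argument then reduces to a local claim: every single quasi-isomorphism $f:(Y_1,\phi_1)\stackrel{\sim}{\rightarrow}(Y_2,\phi_2)$ of $P_{\infty}$-algebras can be refined into a span of acyclic fibrations $(Y_1,\phi_1)\stackrel{\sim}{\twoheadleftarrow}(W,\phi_W)\stackrel{\sim}{\twoheadrightarrow}(Y_2,\phi_2)$. Granting this, one obtains a zigzag of acyclic fibrations connecting $(X,\varphi)$ and $(X,\phi)$ by replacing each arrow of the original zigzag of weak equivalences by such a span (the endpoints are preserved, and the total length simply doubles).

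The local claim is proved by a graph factorization. Since every object of $Ch_{\mathbb{K}}$ is fibrant, one can construct a path object $(Y_2^{I},\phi_2^{I})$ for $(Y_2,\phi_2)$ in $P_{\infty}$-algebras, with acyclic fibrations $p_0,p_1:(Y_2^{I},\phi_2^{I})\twoheadrightarrow (Y_2,\phi_2)$; equivalently, one factors the graph map $(id_{Y_1},f):(Y_1,\phi_1)\to (Y_1\times Y_2,\phi_1\times\phi_2)$ as an acyclic cofibration $i$ followed by a fibration $p$, using the (semi-)model structure on $P_{\infty}$-algebras available in the setting of the paper. Setting $W=Y_1\times_{Y_2}Y_2^{I}$ (or the intermediate object of the factorization), the projection $W\twoheadrightarrow Y_1$ is the pullback of $p_0$ along $f$ (respectively, a retraction of $i$), hence both a fibration and a weak equivalence by 2-out-of-3 applied to $W\to Y_1=id_{Y_1}\circ(\pi_{Y_1}\circ p\circ i)$. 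The projection $W\to Y_2$ is a fibration by composition, and a weak equivalence by 2-out-of-3 applied to $Y_1\stackrel{i}{\to}W\to Y_2$ whose composite is $f$.

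The main obstacle, and the only non-formal point of the argument, is ensuring that the path object (or the graph factorization) is available in the category of $P_{\infty}$-algebras and not just in $Ch_{\mathbb{K}}$. Algebras over an arbitrary prop do not carry a full transferred model structure, but in the present setup $P_{\infty}$ is a cofibrant resolution and one has enough of a (semi-)model or simplicially enriched structure (coming from the cosimplicial/simplicial resolutions used to define $P_{\infty}\{X\}$ itself in Section~1.2) to produce such factorizations and path objects. Once this is in place, the rest of the argument is a purely formal application of 2-out-of-3.
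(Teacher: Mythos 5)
Your reduction is sound as far as it goes: restricting attention to the vertex sets (by the inductive construction of the subcomplexes) and then refining each elementary piece of a zigzag of quasi-isomorphisms into a span of acyclic fibrations is exactly the right strategy, and it matches the structure of the paper's proof. The problem is the step you yourself single out as ``the only non-formal point'': the existence of a path object, or of a graph factorization of $(id_{Y_1},f):Y_1\to Y_1\times Y_2$, \emph{in the category of $P_{\infty}$-algebras}. For a genuine prop (as opposed to an operad) the category $Ch_{\mathbb{K}}^{P_{\infty}}$ carries no known (semi-)model structure; worse, it does not even have products or pullbacks in any useful sense, because an operation in $P(m,n)$ acts on $A^{\otimes m}$ and there is no natural action on the cross terms of $(A\times B)^{\otimes m}$. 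The cosimplicial/simplicial resolutions of Section~1.2 live in the category of \emph{props} (resolving $P_{\infty}$ or $End_X$), not in the category of $P_{\infty}$-algebras, so they do not supply the factorizations you need. Asserting that ``one has enough of a (semi-)model or simplicially enriched structure'' is therefore not a gap you can close by citation: it is precisely the point where the proof has to do real work, and the construction you propose ($W=Y_1\times_{Y_2}Y_2^{I}$ as a $P_{\infty}$-algebra) is not available.

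The paper's actual route circumvents this entirely. Given a weak equivalence $f:(X,\varphi)\to(X,\phi)$, one factors the \emph{underlying cochain map} in $Ch_{\mathbb{K}}$ as an acyclic cofibration $i:X\rightarrowtail Z(X)$ followed by an acyclic fibration $p$, uses fibrancy of $X$ to get a retraction $s$ of $i$ which is again an acyclic fibration, and thereby builds a diagram $\mathcal{Y}(X)$ of cochain complexes refining $X\stackrel{=}{\leftarrow}X\stackrel{f}{\rightarrow}X$. The $P_{\infty}$-structure is then put on the whole diagram at once by lifting the given prop morphism $P_{\infty}\rightarrow End_{X\stackrel{=}{\leftarrow}X\stackrel{f}{\rightarrow}X}$ through the acyclic fibration of props $End_{\mathcal{Y}(X)}\twoheadrightarrow End_{X\stackrel{=}{\leftarrow}X\stackrel{f}{\rightarrow}X}$, which exists because $P_{\infty}$ is a cofibrant prop (this is the argument of \cite[Lemma 8.3]{Fre2}); the case of a wedge $(X,\varphi)\leftarrow(X',\varphi')\rightarrow(X,\phi)$ is handled identically by factoring $(g,d):X'\to X\times X$ in $Ch_{\mathbb{K}}$. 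In other words, all homotopical factorizations happen in $Ch_{\mathbb{K}}$ and in the category of props, never in $Ch_{\mathbb{K}}^{P_{\infty}}$. If you replace your path-object construction by this endomorphism-prop lifting argument, the rest of your proof goes through.
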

\begin{proof}
By construction, it is sufficient to prove that these two subcomplexes share the same set of vertices.
The vertices of $P_{\infty}\{X\}_{[\phi]}^f$ are included in the set of vertices of $P_{\infty}\{X\}_{[\phi]}$,
so we just have to prove the converse inclusion.
Let $\varphi$ be a vertex of $P_{\infty}\{X\}_{[\phi]}$. The $P_{\infty}$-algebra $(X,\varphi)$
is linked to $(X,\phi)$ by a zigzag of weak equivalences of finite length.
We want to prove that this zigzag can be replaced by a zigzag of acyclic fibrations.
It is sufficient to prove it in two cases: the case of a direct arrow and the case of two opposite arrows.
Once these two cases work, the method can be extended by induction to all zigzags.

\textit{First case.} Let $f:(X,\varphi)\stackrel{\sim}{\rightarrow}(X,\phi)$ be a weak equivalence of $P_{\infty}$-algebras.
Its image under the forgetful functor admits a decomposition $X\stackrel{i}{\rightarrow}Z(X)\stackrel{p}{\rightarrow} X$
in cochain complexes into an acyclic cofibration followed by an acyclic fibration. Since $X$ is fibrant (like any cochain complex
over a field), the map $i$ admits a section $s:Z(X)\stackrel{\sim}{\twoheadrightarrow} X$ such that $s\circ i = id_X$,
which is surjective and thus forms an acyclic fibration.
We obtain the following commutative diagram:
\[
\mathcal{Y}(X):\xymatrix{ &  & X\\
X\ar@/^{1pc}/[urr]^{=}\ar@/^{-1pc}/[drr]_f
\ar@{>->}[]!R+<4pt,0pt>;[r]_-{i}^-{\sim} & Z(X)\ar@{->>}[ur]_{s}^{\sim}\ar@{->>}[dr]^{p}_{\sim}\\
 &  & X
}.
\]
We know that there exists a prop morphism $P_{\infty}\rightarrow End_{X\stackrel{=}{\leftarrow}X
\stackrel{f}{\rightarrow} X}$, so we can apply the argument of \cite[Lemma 8.3]{Fre2} to get
a lifting
\[
\xymatrix{
 & End_{\mathcal{Y}(X)} \ar@{->>}[d]^-{\sim} \\ P_{\infty} \ar[r] \ar[ur] & End_{X\stackrel{=}{\leftarrow}X
\stackrel{f}{\rightarrow} X}
}
\]
(since $P_{\infty}$ is a cofibrant prop) which preserves $\varphi$ and $\phi$,
hence a zigzag $(X,\varphi)\stackrel{\sim}{\twoheadleftarrow}Z(X)\stackrel{\sim}{\twoheadrightarrow}(X,\phi)$
of $P_{\infty}$-algebra implying that $\varphi\in P_{\infty}\{X\}_{[\phi]}^f$.

\textit{Second case.}
Suppose that there is a zigzag of two oppposite weak equivalences
$(X,\varphi)\stackrel{g}{\leftarrow}(X',\varphi')\stackrel{d}{\rightarrow}(X,\phi)$.
The map of cochain complexes $(g,d):X\rightarrow X\times X$ factors through
an acyclic cofibration followed by a fibration
$X\stackrel{\sim}{\rightarrow}Z(X)\twoheadrightarrow X\times X$ hence a commutative diagram
\[
\mathcal{Y}(X):\xymatrix{ &  & X\\
X\ar@/^{1pc}/[urr]^g\ar@/^{-1pc}/[drr]_d
\ar@{>->}[]!R+<4pt,0pt>;[r]^-{\sim} & Z(X)\ar@{->>}[ur]^{\sim}\ar@{->>}[dr]_{\sim}\\
 &  & X
}.
\]
Since there is a prop morphism $P_{\infty}\rightarrow End_{X\stackrel{g}{\leftarrow}X
\stackrel{d}{\rightarrow} X}$ we can apply the same argument again to obtain a lifting
$P_{\infty}\rightarrow End_{\mathcal{Y}(X)}$ and conclude that $\varphi\in P_{\infty}\{X\}_{[\phi]}^f$.
\end{proof}

\subsubsection{A characterization via homotopy automorphisms}

Recall from Section 1.3 that $P_{\infty}\{X\}$ can be identified as a homotopy fiber
\[
\xymatrix{P_{\infty}\{X\}\ar[r]\ar[d] & diag\mathcal{N} fwCh_{\mathbb{K}}^{P\otimes\Delta^{\bullet}}\ar[d]\\
\{X\}\ar[r] & \mathcal{N}fwCh_{\mathbb{K}} \sim  \mathcal{N}wCh_{\mathbb{K}}
}.
\]
This homotopy fiber restricts to a homotopy fiber
\[
\xymatrix{P_{\infty}\{X\}_{[\phi]}^f\ar[r]\ar[d] & diag\mathcal{N} fwCh_{\mathbb{K}}^{P\otimes\Delta^{\bullet}}|_X\ar[d]\\
\{X\}\ar[r] &  \mathcal{N}wCh_{\mathbb{K}}|_X
}
\]
where $\mathcal{N}wCh_{\mathbb{K}}|_X$  is the connected component of the cochain complex $X$
and
\[
diag\mathcal{N} fwCh_{\mathbb{K}}^{P\otimes\Delta^{\bullet}}|_X
=\coprod_{[X,\varphi]} diag\mathcal{N} fwCh_{\mathbb{K}}^{P\otimes\Delta^{\bullet}}|_{(X,\varphi)}
\]
is the union of the connected components $diag\mathcal{N} fwCh_{\mathbb{K}}^{P\otimes\Delta^{\bullet}}|_{(X,\varphi)}$
of the $(X,\varphi)$ ranging over the acyclic fibration classes of $P_{\infty}$-algebras
having $X$ as underlying complex.
According to the previous lemma, one gets actually the homotopy fiber
\[
\xymatrix{P_{\infty}\{X\}_{[\phi]}\ar[r]\ar[d] & diag\mathcal{N} fwCh_{\mathbb{K}}^{P\otimes\Delta^{\bullet}}|_X\ar[d]\\
\{X\}\ar[r] &  \mathcal{N}wCh_{\mathbb{K}}|_X
}.
\]
The main goal of this section is to prove the following theorem:
\begin{thm}
Let us suppose that $X$ is a cochain complex with the trivial differential (e.g $X$ is the cohomology of some cochain complex).
Let $\phi:P_{\infty}\rightarrow End_X$ be a prop morphism.
There exists a commutative square
\[
\xymatrix{
WAut_{\mathbb{K}}(X)\times_{Aut_{\mathbb{K}}(X)} P_{\infty}\{X\}_{[\phi]}^f \ar[r]^-{\sim} \ar@{->>}[d]_{\pi}
& diag\mathcal{N} fwCh_{\mathbb{K}}^{P\otimes\Delta^{\bullet}}|_X\ar[d]\\
\overline{W}Aut_{\mathbb{K}}(X) \ar[r]^-{\sim} & \mathcal{N}wCh_{\mathbb{K}}|_X
}
\]
where $\pi$ is a Kan fibration obtained by the simplicial Borel construction
and the horizontal maps are weak equivalences of simplicial sets.
\end{thm}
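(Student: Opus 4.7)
The plan is to realize the right-hand homotopy fiber sequence as an actual Kan fibration coming from the simplicial Borel construction recalled in Section 2.1, exploiting the assumption that $X$ has trivial differential. Under this hypothesis, every self weak equivalence of $X$ is in fact an isomorphism of graded $\mathbb{K}$-modules, so the ``simplicial monoid of homotopy automorphisms'' of $X$ reduces to the discrete group $Aut_{\mathbb{K}}(X)$. This already suggests the Dwyer--Kan identification $\mathcal{N}wCh_{\mathbb{K}}|_X \simeq \overline{W}Aut_{\mathbb{K}}(X)$, which will serve as the bottom horizontal equivalence.

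\medskip

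First I equip $P_{\infty}\{X\}_{[\phi]}^f$ with an $Aut_{\mathbb{K}}(X)$-action by conjugation: each $g \in Aut_{\mathbb{K}}(X)$ induces a prop isomorphism $g_* : End_X \to End_X$ sending $f \in End_X(m,n)$ to $g^{\otimes n}\circ f\circ (g^{-1})^{\otimes m}$, and we set $g\cdot\varphi = g_*\circ\varphi$. The map $g$ itself then defines a $P_{\infty}$-algebra isomorphism $(X,\varphi)\stackrel{\cong}{\to}(X,g_*\varphi)$, which is in particular an acyclic fibration, so $g_*\varphi$ stays in the local realization class of $\phi$ and the action restricts to $P_{\infty}\{X\}_{[\phi]}^f$. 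Extending $g$ to the cosimplicial resolution $End_X^{\Delta[n]}$ promotes this to a simplicial action. Since $P_{\infty}\{X\}_{[\phi]}^f$ is a Kan complex (by the Kan subcomplex lemma above and the preceding identification $P_{\infty}\{X\}_{[\phi]}^f = P_{\infty}\{X\}_{[\phi]}$), the principal bundle framework of Section 2.1 yields the required Kan fibration
\[
\pi:WAut_{\mathbb{K}}(X)\times_{Aut_{\mathbb{K}}(X)} P_{\infty}\{X\}_{[\phi]}^f \twoheadrightarrow \overline{W}Aut_{\mathbb{K}}(X)
\]
of fiber $P_{\infty}\{X\}_{[\phi]}^f$.

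\medskip

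Next I construct the horizontal maps. The bottom map sends an $n$-simplex $(g_{n-1},\ldots,g_0)$ of $\overline{W}Aut_{\mathbb{K}}(X)$ to the $n$-simplex of $\mathcal{N}wCh_{\mathbb{K}}|_X$ given by the chain of isomorphisms $X\stackrel{g_0}{\leftarrow} X \stackrel{g_1}{\leftarrow}\cdots\stackrel{g_{n-1}}{\leftarrow} X$; it is a weak equivalence by the Dwyer--Kan classification theorem together with the equality $haut(X)=Aut_{\mathbb{K}}(X)$ recalled above. The top map refines this: an $n$-simplex $((g_{n-1},\ldots,g_0),\varphi)$ is sent to the same chain of isomorphisms, decorated at each vertex by the $P_{\infty}$-structure transported from $\varphi$ along the appropriate composite of the $g_i$'s, and tensored with $P\otimes\Delta^n$ via the cosimplicial structure underlying the definition of $P_{\infty}\{X\}$ in Section 1.2. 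The fact that the action on $P_{\infty}\{X\}_{[\phi]}^f$ is precisely conjugation guarantees well-definedness on the quotient under the diagonal $Aut_{\mathbb{K}}(X)$-action, and commutativity of the square is immediate from the construction.

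\medskip

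To conclude, I compare the two resulting fiber sequences. The right-hand homotopy pullback of Section 1.3, restricted to the connected component of $X$ and combined with the preceding identification $P_{\infty}\{X\}_{[\phi]}^f = P_{\infty}\{X\}_{[\phi]}$, shows that the homotopy fiber of the right vertical map over $X$ is $P_{\infty}\{X\}_{[\phi]}^f$. By construction $\pi$ is a Kan fibration with the same fiber, and the top horizontal map restricts to the identity on this common fiber. Since the bottom map is a weak equivalence, a five-lemma argument on the long exact sequences of homotopy groups of the two fibration sequences forces the top map to be a weak equivalence as well. The main obstacle is the precise construction of the top horizontal map, in particular verifying that decorating with transported structures and $P\otimes\Delta^n$-cosimplicial data really descends from the product to the Borel quotient and that the induced map on fibers is genuinely the identity.
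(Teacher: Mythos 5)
Your proposal is correct and follows essentially the same route as the paper: a conjugation action of the discrete group $Aut_{\mathbb{K}}(X)$ on $P_{\infty}\{X\}_{[\phi]}^f$ (restricting to the local realization space because $g$ is itself a strict $P_{\infty}$-algebra isomorphism $(X,\varphi)\cong(X,g\cdot\varphi)$), the simplicial Borel construction for $\pi$, the Dwyer--Kan identification $\mathcal{N}wCh_{\mathbb{K}}|_X\simeq\overline{W}haut(X)=\overline{W}Aut_{\mathbb{K}}(X)$ for the bottom map, and a five-lemma comparison of the two fiber sequences for the top map. The "obstacle" you flag at the end is handled in the paper exactly as you sketch it, by transporting the $P\otimes\Delta^{\bullet}$-structure along the chain of isomorphisms recorded by a simplex of $WAut_{\mathbb{K}}(X)$.
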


Since $\pi$ is obtained by applying the simplicial Borel construction to a simplicial
action of $Aut_{\mathbb{K}}(X)$ (seen as a discrete group) on $P_{\infty}\{X\}_{[\phi]}^f$,
we get:
\begin{prop}
The strict pullback below is a homotopy pullback of simplicial sets:
\[
\xymatrix{
P_{\infty}\{X\}_{[\phi]}^f \ar[d] \ar[r] & WAut_{\mathbb{K}}(X)\times_{Aut_{\mathbb{K}}(X)} P_{\infty}\{X\}_{[\phi]}^f \ar@{->>}[d]_{\pi}\\
\{X\} \ar[r] & \overline{W}Aut_{\mathbb{K}}(X)
}.
\]
\end{prop}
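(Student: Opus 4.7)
The plan is to reduce the statement to the general principle that a strict pullback along a Kan fibration is automatically a homotopy pullback, using right properness of the Kan--Quillen model structure on simplicial sets. So the first task is to exhibit the right-hand vertical arrow $\pi$ as a genuine Kan fibration, and the second task is to identify the strict fiber over the base point of $\overline{W}Aut_{\mathbb{K}}(X)$ with $P_{\infty}\{X\}_{[\phi]}^f$ itself, making the strict pullback along $\{X\}\to\overline{W}Aut_{\mathbb{K}}(X)$ coincide with $P_{\infty}\{X\}_{[\phi]}^f$.

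For the first task, I would invoke the machinery recalled in Section~2.1. The simplicial Borel construction $WAut_{\mathbb{K}}(X)\times_{Aut_{\mathbb{K}}(X)}P_{\infty}\{X\}_{[\phi]}^f$ comes with its canonical projection $\pi$ to $\overline{W}Aut_{\mathbb{K}}(X)$, and by construction this is an $Aut_{\mathbb{K}}(X)$-bundle in the sense of \cite[Chapter IV]{May}. Since $P_{\infty}\{X\}_{[\phi]}^f$ is a Kan complex (it is a Kan subcomplex of the mapping space $P_{\infty}\{X\}$ by Lemma~2.2) and $Aut_{\mathbb{K}}(X)$ is a discrete simplicial group, \cite[Chapter IV, Proposition 8.4 and Theorem 19.4]{May} imply that $\pi$ is a Kan fibration, exactly as used in the proof of Theorem~2.8.

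For the second task, one reads off from the definition of the Borel construction $WG\times_G K$ that the fiber of $\pi$ over the unique vertex of $\overline{W}G$ is $K$ itself, because $\overline{W}_0Aut_{\mathbb{K}}(X)$ is a singleton and the quotient $K\times WG/((g.x,y)=(x,g.y))$ restricts over the base point to a copy of $K$ glued along the contractible fiber of $WG\to\overline{W}G$. Taking $G=Aut_{\mathbb{K}}(X)$ and $K=P_{\infty}\{X\}_{[\phi]}^f$, and observing that the map $\{X\}\to\overline{W}Aut_{\mathbb{K}}(X)$ is forced to be the inclusion of this unique vertex, the strict pullback is exactly $P_{\infty}\{X\}_{[\phi]}^f$, which is what appears in the upper-left corner of the diagram.

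Finally, since the category of simplicial sets is right proper with the Kan--Quillen model structure, the pullback of a weak equivalence along a Kan fibration is a weak equivalence; equivalently, the strict pullback square along a Kan fibration is a homotopy pullback. Applying this to the fibration $\pi$ and to the map $\{X\}\to\overline{W}Aut_{\mathbb{K}}(X)$ yields the claim. No step looks to be the genuine obstacle here: the only verification one should not brush off is the identification of the action used in Theorem~2.8 with the one that makes the strict pullback literally equal to $P_{\infty}\{X\}_{[\phi]}^f$, which is where one must check that the map $\{X\}\to\overline{W}Aut_{\mathbb{K}}(X)$ in the statement really is the canonical vertex inclusion and not some twisted variant.
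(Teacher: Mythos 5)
Your proposal is correct and follows essentially the same route as the paper: identify $\pi$ as an $Aut_{\mathbb{K}}(X)$-bundle via the simplicial Borel construction, deduce from \cite[Chapter IV]{May} that it is a Kan fibration because its typical fiber $P_{\infty}\{X\}_{[\phi]}^f$ is a Kan complex, and conclude by right properness of simplicial sets that the strict fiber models the homotopy fiber. Your extra care in checking that the strict fiber over the unique vertex of $\overline{W}Aut_{\mathbb{K}}(X)$ is literally $P_{\infty}\{X\}_{[\phi]}^f$ is a detail the paper leaves implicit, but it is the same argument.
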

\begin{proof}
Since $Aut_{\mathbb{K}}(X)$ is a discrete simplicial group acting on $P_{\infty}\{X\}_{[\phi]}^f$,
according to \cite{May} we can associate to this action an $Aut_{\mathbb{K}}(X)$-bundle of typical fiber $P_{\infty}\{X\}_{[\phi]}^f$ defined by $\pi$. Moreover, $P_{\infty}\{X\}_{[\phi]}^f$
is a Kan complex so $\pi$ is a Kan fibration. Given that the model category of simplicial sets
is right proper, the fiber of $\pi$ is a model of the homotopy fiber of $\pi$.
\end{proof}

\textit{Proof of Theorem 2.8}

We first need to define the action of $Aut_{\mathbb{K}}(X)$ on the local realization spaces:
\begin{lem}
The group $Aut_{\mathbb{K}}(X)$ acts simplicially on $P_{\infty}\{X\}_{[\phi]}^f$.
\end{lem}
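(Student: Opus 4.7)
The plan is to construct the action by postcomposition with conjugation on the endomorphism prop, and then check compatibility with the local realization subcomplex.

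First, to any $\sigma\in Aut_{\mathbb{K}}(X)$ I would associate the prop automorphism $c_{\sigma}:End_X\stackrel{\cong}{\to}End_X$ defined arity-wise by $f\mapsto \sigma^{\otimes n}\circ f\circ (\sigma^{-1})^{\otimes m}$ on $Hom_{Ch_{\mathbb{K}}}(X^{\otimes m},X^{\otimes n})$. A direct verification shows that $c_{\sigma}$ respects horizontal and vertical compositions, units, and $\Sigma$-actions, and that $c_{\sigma\tau}=c_\sigma\circ c_\tau$ with $c_{id}=id$. The desired action on an $n$-simplex $\varphi:P_{\infty}\otimes\Delta[n]\to End_X$ of $P_{\infty}\{X\}$ is then $\sigma\cdot\varphi:=c_{\sigma}\circ\varphi$ (viewing $Aut_{\mathbb{K}}(X)$ as a discrete simplicial group). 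This defines a simplicial left action on the whole moduli space $P_{\infty}\{X\}$, since postcomposition commutes with the simplicial structure coming from $\Delta[-]$.

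Next I would check that the action restricts to $P_{\infty}\{X\}_{[\phi]}^f$. The key observation is that $\sigma:(X,\varphi)\to(X,\sigma\cdot\varphi)$ is an isomorphism of $P_{\infty}$-algebras: for every operation $p\in P_{\infty}$, the identity $(\sigma\cdot\varphi)(p)\circ\sigma^{\otimes m}=\sigma^{\otimes n}\circ\varphi(p)$ holds by definition of $c_{\sigma}$. Since $X$ is fibrant and cofibrant, $\sigma$ is automatically an acyclic fibration in $Ch_{\mathbb{K}}$, hence an acyclic fibration of $P_{\infty}$-algebras. Consequently, any zigzag $(X,\varphi)\stackrel{\sim}{\twoheadleftarrow}\bullet\stackrel{\sim}{\twoheadrightarrow}(X,\phi)$ prepends to a zigzag $(X,\sigma\cdot\varphi)\stackrel{\sigma^{-1}}{\twoheadleftarrow}(X,\varphi)\stackrel{\sim}{\twoheadleftarrow}\bullet\stackrel{\sim}{\twoheadrightarrow}(X,\phi)$ of acyclic fibrations, proving $\sigma\cdot\varphi\in (P_{\infty}\{X\}_{[\phi]}^f)_0$ whenever $\varphi\in (P_{\infty}\{X\}_{[\phi]}^f)_0$.

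Finally, I would promote this invariance from vertices to the full subcomplex using the inductive construction from Lemma 2.3: the action of $\sigma$ is a simplicial automorphism of $P_{\infty}\{X\}$, and so commutes with faces and degeneracies. Hence if an $n$-simplex has all faces in $(P_{\infty}\{X\}_{[\phi]}^f)_{n-1}$, so does its image under $\sigma$; by induction on the simplicial dimension, the action of $Aut_{\mathbb{K}}(X)$ restricts to a simplicial action on $P_{\infty}\{X\}_{[\phi]}^f$. The only slightly subtle point is the stability of the vertex set under the action, which is where the observation that $\sigma$ itself provides an acyclic fibration of $P_{\infty}$-algebras (and where Lemma 2.7, allowing us to work with zigzags of acyclic fibrations) is essential; everything else is formal.
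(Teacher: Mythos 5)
Your proposal is correct and follows essentially the same route as the paper: a conjugation action of $Aut_{\mathbb{K}}(X)$ on $End_X$ postcomposed with the structure maps, simplicial compatibility by naturality in $\Delta[-]$, and reduction of the invariance check to vertices via the inductive construction of the subcomplex. Your justification of the vertex-level stability is in fact a slight streamlining of the paper's: where the paper invokes a homotopy equivalence $\varphi_g\sim\varphi$ and a result of Fresse to produce the zigzag of acyclic fibrations, you observe directly that $\sigma$ itself is an isomorphism, hence an acyclic fibration, of $P_{\infty}$-algebras from $(X,\varphi)$ to $(X,\sigma\cdot\varphi)$, which can be prepended to the given zigzag.
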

\begin{proof}
We see $Aut_{\mathbb{K}}(X)$ as a discrete simplicial group and we construct a simplicial action on
the whole moduli space by conjugation in each simplicial dimension
\begin{eqnarray*}
Aut_{\mathbb{K}}(X)\times P_{\infty}\{X\}_k & \rightarrow & P_{\infty}\{X\}_k \\
(g:X\stackrel{\cong}{\rightarrow}X,\varphi:P\otimes\Delta^k\rightarrow End_X) & \mapsto &
\varphi_g=g^{-1}_*\circ g^*\circ\varphi
\end{eqnarray*}
where
\begin{eqnarray*}
\varphi_g(m,n):(P\otimes\Delta^k)(m,n) & \stackrel{\varphi}{\rightarrow} & Hom_{\mathbb{K}}(X^{\otimes m},X^{\otimes n})\\
 & \stackrel{(g^{\otimes m})^*}{\rightarrow} & Hom_{\mathbb{K}}(X^{\otimes m},X^{\otimes n})\\
 & \stackrel{((g^{-1})^{\otimes n})^*}{\rightarrow} & Hom_{\mathbb{K}}(X^{\otimes m},X^{\otimes n}).
\end{eqnarray*}
The notation $Hom_{\mathbb{K}}$ stands for the internal hom of cochain complexes.
The map $g^{-1}_*\circ g^*$ is a prop morphism:

-regarding the horizontal composition products, for any $f_1\in Hom_{\mathbb{K}}(X^{\otimes m_1},X^{\otimes n_1})$
and $f_2\in Hom_{\mathbb{K}}(X^{\otimes m_2},X^{\otimes n_2})$, we have
\begin{eqnarray*}
(g^{-1}_*\circ g^*)(f_1\otimes f_2) & = & (g^{-1})^{\otimes n_1+n_2}\circ f_1\otimes f_2
\circ g^{\otimes m_1+m_2} \\
 & = & ((g^{-1})^{\otimes n_1}\circ f_1\circ g^{\otimes m_1})\otimes ((g^{-1})^{\otimes n_2}\circ f_2\circ g^{\otimes m_2}) \\
  & = & (g^{-1}_*\circ g^*)(f_1)\otimes (g^{-1}_*\circ g^*)(f_2);
\end{eqnarray*}

-regarding the vertical composition products, for any $f_1\in Hom_{\mathbb{K}}(X^{\otimes m},X^{\otimes k})$
and $f_2\in Hom_{\mathbb{K}}(X^{\otimes m_2},X^{\otimes n_2})$, we have
\begin{eqnarray*}
(g^{-1}_*\circ g^*)(f_2\circ f_1) & = & (g^{-1})^{\otimes n}\circ f_2\circ f_1\circ g^{\otimes m}\\
 & = & (g^{-1})^{\otimes n}\circ f_2\circ (g\circ g^{-1})^{\otimes k} \circ f_1\circ g^{\otimes m}\\
 & = & (g^{-1})^{\otimes n}\circ f_2\circ g^{\otimes k}\circ (g^{-1})^{\otimes k} \circ f_1\circ g^{\otimes m}\\
 & = & (g^{-1}_*\circ g^*)(f_2)\circ (g^{-1}_*\circ g^*)(f_1).
\end{eqnarray*}

The compatibility with the actions of symmetric groups is trivial since it amounts to
permute the factors in $g^{\otimes m}$ and $(g^{-1})^{\otimes n}$.
By construction we clearly get a group action, that is $\varphi_{g_2\circ g_1}=(\varphi_{g_1})_{g_2}$
and $\varphi_{id}=\varphi$.

We check the compatibility of these actions defined in each simplicial dimension
with the simplicial structure of $P_{\infty}\{X\}$.
We have a commutative diagram
\[
\xymatrix{
P_{\infty}\otimes\Delta^k \ar[r]^-{\varphi} & End_X \ar[d]^{=}\ar[r]^-{\cong}_-{g^{-1}_*\circ g^*}
 & End_X \ar[d]^{=} \\
P_{\infty}\otimes\Delta^{k-1} \ar[u]\ar[r]_-{d_i\varphi} & End_X \ar[r]^-{\cong}_-{g^{-1}_*\circ g^*}
 & End_X
}
\]
so $d_i(\varphi_g)=d_i(\varphi_g)$.
The same argument holds for the degeneracies.

Finally, we have to verify that this action restricts to the local realization space
$P_{\infty}\{X\}_{[\phi]}^f$. We prove it inductively on the simplicial dimension $k$:
since a $k$-simplex of the moduli space $P_{\infty}\{X\}$ lies in $P_{\infty}\{X\}_{[\phi]}^f$
if and only if all its vertices lie in $P_{\infty}\{X\}_{[\phi]}^f$, it is sufficient to verify
the restriction of the group action for $k=0$.
Let $\varphi:P_{\infty}\rightarrow End_X$ be a vertex of $P_{\infty}\{X\}_{[\phi]}^f$, since
$g^{-1}_*\circ g^*$ is a prop isomorphism there is a homotopy equivalence $\varphi_g\sim\varphi$.
According to \cite{Fre2}, this homotopy equivalence implies the existence of a zigzag of acyclic fibrations
(not only weak equivalences) between $(X,\varphi_g)$ and $(X,\varphi)$,
hence between $(X,\varphi_g)$ and $(X,\phi)$. Therefore $\varphi_g\in P_{\infty}\{X\}_{[\phi]}^f$.
\end{proof}

With this well defined simplicial group action we start the proof of Theorem 2.8.
First we construct the commutative diagram
\[
\xymatrix{
WAut_{\mathbb{K}}(X)\times_{Aut_{\mathbb{K}}(X)} P_{\infty}\{X\}_{[\phi]}^f \ar[r] \ar@{->>}[d]_{\pi}
& diag\mathcal{N} fwCh_{\mathbb{K}}^{P\otimes\Delta^{\bullet}}|_X\ar[d]\\
\overline{W}Aut_{\mathbb{K}}(X) \ar[r] & \mathcal{N}wCh_{\mathbb{K}}|_X
}
\]
in the following way:
\[
\xymatrix{
((f_k,...,f_0),\varphi:P_{\infty}\otimes\Delta^k\rightarrow End_X) \ar[r] \ar[d]_{\pi=p^*_{Aut_{\mathbb{K}}(X)}} &
((X\phi)\stackrel{\cong}{\rightarrow}(X,f_k.\varphi)...\stackrel{\cong}{\rightarrow}(X,(f_k\circ...\circ f_1).\varphi)) \ar[d]^{forget} \\
(f_{k-1},...,f_0) \ar[r] & (X\stackrel{f_{k-1}}{\rightarrow}...\stackrel{f_0}{\rightarrow}X)
}
\]
where the left vertical map is the projection associated to the Borel construction
and the right vertical map forgets the $P_{\infty}\otimes\Delta^k$-algebra structure.
The top horizontal map transfers the $P_{\infty}\otimes\Delta^k$-algebra structure on $X$
along the sequence of isomorphisms given by $f_k,...,f_0$ and the bottom horizontal map
is just an inclusion.
It is clear by definition of faces and degeneracies in the simplicial structures involved
that these four maps are simplicial.

It remains to prove that the two horizontal maps are weak equivalences.
For the bottom arrow, it follows from the work of Dwyer-Kan \cite{DK3} which identifies
the connected components of the classification space of a model category with
the classifying complexes of homotopy automorphism and from the fact that
$\overline{W}haut(X)=\overline{W}Aut_{\mathbb{K}}(X)$.

For the top arrow, we have a morphism of homotopy fibers
\[
\xymatrix{
P_{\infty}\{X\}_{[\phi]}^f \ar[r]\ar[d]^{=} & WAut_{\mathbb{K}}(X)\times_{Aut_{\mathbb{K}}(X)} P_{\infty}\{X\}_{[\phi]}^f \ar[d] \ar[r] &
\overline{W}Aut_{\mathbb{K}}(X)\ar[d]^{\sim} \\
P_{\infty}\{X\}_{[\phi]}^f \ar[r] & diag\mathcal{N} fwCh_{\mathbb{K}}^{P\otimes\Delta^{\bullet}}|_X \ar[r] &
\mathcal{N}wCh_{\mathbb{K}}|_X \\
}
\]
which induces a morphism of long exact sequences of homotopy groups. Since the two external arrows
of the diagram above induce isomorphisms of homotopy groups, we conclude that the middle one is a
weak equivalence by applying the five lemma.

\begin{rem}

(1) One can apply the same arguments to obtain similar results for $P_{\infty}\{X\}$ and
$Real_{P_{\infty}}(X)$.

(2) One can study for instance realization problems of diagrams of bialgebras.
Let $I$ be a small category, the category of $I$-diagrams $Ch_{\mathbb{K}}^I$ is equipped with
the injective model category structure (pointwise cofibrations) and with the structure of
a symmetric monoidal category over $Ch_{\mathbb{K}}$. These two structures are compatible,
so that $Ch_{\mathbb{K}}^I$ forms a monoidal model category over $Ch_{\mathbb{K}}$.
We can therefore define $P$-algebras and
$P_{\infty}$-algebras in these diagrams. The cohomology induces a symmetric monoidal functor
$H^*:Ch_{\mathbb{K}}^I\rightarrow Ch_{\mathbb{K}}^I$, so for any $F\in Ch_{\mathbb{K}}^I$,
a morphism $P_{\infty}\rightarrow End_F$ induces $P\rightarrow H^*End_X\rightarrow End_{H^*F}$
and it is then meaningful to study realization problems in this setting.
For instance, when $I=\{\bullet\rightarrow\bullet\}$ one can consider realizations of morphisms
of $P$-algebras.
\end{rem}

\subsubsection{When $X$ has a non-zero differential.}

We recall from \cite{Fre2} that the endomorphism prop $End_f$ of a morphism $f:X\rightarrow Y$ can be obtained
by an explicit pullback in $\Sigma$-biobjects
\[
\xymatrix{
End_f\ar[r]^{d_1}\ar[d]^{d_0} & End_Y\ar[d]^{f^*} \\
End_X\ar[r]_{f_*} & Hom_{XY}
}
\]
where the $\Sigma$-biobject $Hom_{XY}$ is defined by $Hom_{XY}(m,n)=Hom_{\mathbb{K}}(X^{\otimes m},X^{\otimes n})$,
and the maps $f^*$ and $f_*$ are defined respectively by $f^*(m,n)=-\circ f^{\otimes m}$ and $f_*(m,n)=f^{\otimes n}\circ-$.
In this pullback square the maps $d_0$ and $d_1$ are prop morphisms.
According to \cite[Lemma 7.2]{Fre2}, if $f$ is an acyclic fibration then $d_1$ is an acyclic
fibration and $d_0$ a weak equivalence.
Given that every dg prop is fibrant, the zigzag of weak equivalences of fibrant props
\[
End_X\stackrel{\sim}{\leftarrow}End_f\stackrel{\sim}{\rightarrow} End_Y
\]
from the pullback above gives rise by postcomposition to a zigzag of weak equivalences of Kan complexes
\[
P_{\infty}\{X\}\stackrel{\sim}{\leftarrow}P_{\infty}\{f\}\stackrel{\sim}{\rightarrow} P_{\infty}\{Y\}.
\]
In particular, when working over a field, there is a projection $X\twoheadrightarrow H^*X$ of a cochain complex
$X$ to its cohomology which forms a quasi-isomorphism.
It implies that any $P_{\infty}$-algebra structure on $H^*X$ can be transferred to $X$
via this acyclic fibration (see \cite[Theorem 7.3(1)]{Fre2}).
Then we apply the argument above to obtain a zigzag of weak equivalences between
$P_{\infty}\{X\}$ and $P_{\infty}\{H^*X\}$:
\begin{prop}
Suppose that the moduli space $P_{\infty}\{H^*X\}$ is not empty. Then the moduli space
$P_{\infty}\{X\}$ is not empty and has the same homotopy type.
\end{prop}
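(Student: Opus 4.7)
My plan is to instantiate the zigzag construction recalled in the preceding paragraph at a canonical acyclic fibration $\pi:X\twoheadrightarrow H^*X$. Such a $\pi$ exists because, over a field, the short exact sequence $0\to B^*X\to Z^*X\to H^*X\to 0$ splits: one picks a direct sum decomposition $X\cong H^*X\oplus C$ with $C$ acyclic and takes $\pi$ to be the projection onto the first factor, which is visibly a surjective quasi-isomorphism.

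Forming the pullback $\Sigma$-biobject $End_{\pi}$ from the diagram at the start of this subsection and invoking \cite[Lemma 7.2]{Fre2}, since $\pi$ is an acyclic fibration we obtain a zigzag of weak equivalences of fibrant dg props
\[
End_X\stackrel{\sim}{\leftarrow}End_{\pi}\stackrel{\sim}{\rightarrow}End_{H^*X},
\]
in which $d_1:End_{\pi}\to End_{H^*X}$ is moreover an acyclic fibration. Since $P_{\infty}$ is a cofibrant prop and every dg prop is fibrant, postcomposition with $d_0$ and $d_1$ induces a zigzag of weak equivalences of Kan complexes
\[
P_{\infty}\{X\}\stackrel{\sim}{\leftarrow}P_{\infty}\{\pi\}\stackrel{\sim}{\rightarrow}P_{\infty}\{H^*X\},
\]
which already establishes the claimed equality of homotopy types.

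For non-emptiness, I would take any vertex $\phi:P_{\infty}\to End_{H^*X}$ of $P_{\infty}\{H^*X\}$ and apply the lifting axiom to the cofibrant prop $P_{\infty}$ and the acyclic fibration $d_1$; this produces $\tilde\phi:P_{\infty}\to End_{\pi}$ with $d_1\circ\tilde\phi=\phi$, so that $d_0\circ\tilde\phi:P_{\infty}\to End_X$ is a vertex of $P_{\infty}\{X\}$. This lifting step is exactly the content of the transfer theorem \cite[Theorem 7.3(1)]{Fre2} in the present setting, and is essentially the only non-formal argument; the remaining steps are a direct assembly of \cite[Lemma 7.2]{Fre2}, the cofibrancy of $P_{\infty}$, and the standard splitting of chain complexes over a field. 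In particular I foresee no real obstacle, the statement being little more than a reformulation of the two displayed zigzags.
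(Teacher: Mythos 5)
Your proposal is correct and follows essentially the same route as the paper: both use the projection $X\twoheadrightarrow H^*X$ as an acyclic fibration, form $End_{\pi}$ and invoke \cite[Lemma 7.2]{Fre2} to get the zigzag $P_{\infty}\{X\}\stackrel{\sim}{\leftarrow}P_{\infty}\{\pi\}\stackrel{\sim}{\rightarrow}P_{\infty}\{H^*X\}$, and deduce non-emptiness from the transfer theorem \cite[Theorem 7.3(1)]{Fre2}. You merely make explicit two points the paper leaves implicit, namely the splitting of $X$ over a field and the lifting of a vertex along the acyclic fibration $d_1$ using cofibrancy of $P_{\infty}$.
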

When $H^*X$ is equipped with a $P$-algebra structures, it possesses at least the trivial $P_{\infty}$-algebra structure given by $P_{\infty}\stackrel{\sim}{\rightarrow}P\rightarrow End_{H^*X}$ so the proposition above applies.

The homotopy equivalence above restricts to local realization spaces:
\begin{prop}
Let $f:(X,\phi_X)\stackrel{\sim}{\twoheadrightarrow} (Y,\phi_Y)$ be an acyclic fibration of $P_{\infty}$-algebras. Then there is a zigzag of morphisms Kan complexes
\[
P_{\infty}\{X\}_{[\phi_X]}\leftarrow P_{\infty}\{f\}_{[\phi_f]}\rightarrow P_{\infty}\{Y\}_{[\phi_Y]}
\]
induced by $d_0$ and $d_1$,
where $\phi_f:P_{\infty}\rightarrow End_f$ is the structure of $P_{\infty}$-algebra morphism of $f$. This zigzag induces
isomorphisms between the $n^{th}$ homotopy groups for $n>0$, based at any point homotopic to $\phi$, and an injection of connected components
\[
\pi_0 P_{\infty}\{Y\}_{[\phi_Y]}\hookrightarrow \pi_0 P_{\infty}\{X\}_{[\phi_X]}.
\]
\end{prop}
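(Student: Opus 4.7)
The plan is to produce the zigzag by post-composing with the prop morphisms $d_0$ and $d_1$, to verify that it restricts to the designated local realization subcomplexes, and then to extract the statements on higher homotopy groups and on $\pi_0$ from the fact that each of its two arrows is the restriction of a weak equivalence of full moduli spaces to a union of path components.

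First, since $f$ is an acyclic fibration of cochain complexes, the pullback square recalled just before the statement yields prop morphisms $d_0:End_f\to End_X$ and $d_1:End_f\to End_Y$ with $d_0$ a weak equivalence and $d_1$ an acyclic fibration. Post-composition then defines simplicial maps $d_0^{*}:P_{\infty}\{f\}\to P_{\infty}\{X\}$ and $d_1^{*}:P_{\infty}\{f\}\to P_{\infty}\{Y\}$ which are weak equivalences of Kan complexes, by the argument recorded around Proposition 2.12. In particular, both $\pi_0 d_0^{*}$ and $\pi_0 d_1^{*}$ are bijections on the full moduli spaces.

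I would then check that the zigzag restricts to the local realization spaces. Given a vertex $\psi\in P_{\infty}\{f\}_{[\phi_f]}$, a zigzag of weak equivalences of $P_{\infty}$-algebra morphisms between $(f,\psi)$ and $(f,\phi_f)$ exists by definition. Since $d_0$ and $d_1$ arise from the symmetric monoidal projections out of the arrow category $Ch_{\mathbb{K}}^{\{\bullet\rightarrow\bullet\}}$, the argument used in Lemma 2.4 applies verbatim to show that this zigzag projects, under $d_0$ and $d_1$, to zigzags of weak equivalences of $P_{\infty}$-algebras between $(X,d_0^{*}\psi)$ and $(X,\phi_X)$, respectively between $(Y,d_1^{*}\psi)$ and $(Y,\phi_Y)$. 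Hence $d_0^{*}\psi\in P_{\infty}\{X\}_{[\phi_X]}$ and $d_1^{*}\psi\in P_{\infty}\{Y\}_{[\phi_Y]}$, so that the zigzag in the statement is well defined.

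Since each local realization space is a union of path components of the corresponding full moduli space (a consequence of Proposition 2.6), the higher homotopy groups at a basepoint homotopic to the distinguished one coincide with those of the ambient moduli space at that basepoint, so the weak equivalences $d_0^{*}$ and $d_1^{*}$ directly induce the asserted isomorphisms on $\pi_n$ for $n>0$. For $\pi_0$, the map $\pi_0 P_{\infty}\{Y\}_{[\phi_Y]}\to\pi_0 P_{\infty}\{X\}_{[\phi_X]}$ is constructed as follows: lift $[\psi]$ through the bijection $\pi_0 d_1^{*}$ to a class $[\tilde\psi]\in\pi_0 P_{\infty}\{f\}$; using that $f$ is a weak equivalence of $P_{\infty}$-algebras both from $(X,\phi_X)$ to $(Y,\phi_Y)$ and from $(X,d_0^{*}\tilde\psi)$ to $(Y,d_1^{*}\tilde\psi)$, concatenate with the given zigzag $(Y,d_1^{*}\tilde\psi)\sim(Y,\psi)\sim(Y,\phi_Y)$ to produce a zigzag $(X,d_0^{*}\tilde\psi)\sim(X,\phi_X)$, so that $d_0^{*}\tilde\psi\in P_{\infty}\{X\}_{[\phi_X]}$. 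Injectivity follows from the bijectivity of $\pi_0 d_0^{*}$ and $\pi_0 d_1^{*}$ on the full moduli spaces, combined with the fact that $P_{\infty}\{Y\}_{[\phi_Y]}$ is a union of connected components of $P_{\infty}\{Y\}$. The main subtlety lies precisely in this last step: because a zigzag of weak equivalences of $P_{\infty}$-algebras is strictly weaker than a homotopy of prop morphisms, the lift $\tilde\psi$ is \emph{not} guaranteed to belong to $P_{\infty}\{f\}_{[\phi_f]}$, so one cannot simply post-compose with $d_0^{*}$; one must instead exploit $f$ itself as a $P_{\infty}$-weak equivalence to transport the required zigzag from $Y$ back to $X$.
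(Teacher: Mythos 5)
Your construction of the zigzag, the check that it restricts to the local realization spaces, and the treatment of the higher homotopy groups (using that each local realization space is a union of path components of the ambient moduli space, on which $d_0^*$ and $d_1^*$ are weak equivalences) all match the paper's argument. Where you genuinely diverge is the $\pi_0$ statement. The paper works with the \emph{restricted} maps throughout: it shows $\pi_0(d_1)_*:\pi_0 P_{\infty}\{f\}_{[\phi_f]}\to\pi_0 P_{\infty}\{Y\}_{[\phi_Y]}$ is a bijection by lifting vertices along the acyclic fibration $d_1$ (surjectivity) and lifting homotopies along the acyclic fibration $d_1\otimes A_{PL}(\Delta^1)$ into the path object $End_Y\otimes A_{PL}(\Delta^1)$ (injectivity), and it shows $\pi_0(d_0)_*$ is injective by proving that the space of lifts of a fixed $\varphi$ along the weak equivalence $d_0$ is connected, via the exact sequence of the homotopy fiber; the injection is then the composite. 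You instead use that $d_0^*$ and $d_1^*$ are already $\pi_0$-bijections on the \emph{full} moduli spaces, lift through $P_{\infty}\{f\}$ rather than $P_{\infty}\{f\}_{[\phi_f]}$, and repair the possible failure of the lift to land in the right local component by transporting the zigzag through $f$ itself, viewed as a weak equivalence of $P_{\infty}$-algebras for both structures $\tilde\psi$ and $\phi_f$. This is correct, and the subtlety you flag --- that a lift of $\psi\in P_{\infty}\{Y\}_{[\phi_Y]}$ along $d_1$ is only known a priori to lie in $P_{\infty}\{f\}$, since membership in $P_{\infty}\{f\}_{[\phi_f]}$ requires a zigzag in the arrow category rather than merely over $Y$ --- is exactly the point the paper's surjectivity step leaves implicit; your detour through $f$ produces the needed zigzag on the $X$-side without ever settling the question on the $f$-side. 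The only cost is semantic: your map is defined on all of $\pi_0 P_{\infty}\{Y\}_{[\phi_Y]}$ and agrees with the map ``induced by the zigzag of restricted maps'' on the image of the restricted $\pi_0(d_1)_*$, so it yields the stated injection (indeed a possibly larger one). A minor bookkeeping point: the functoriality you cite for the restriction step is the content of Lemma 2.3, and the paper obtains the same restriction more directly by applying $(d_0)_*$ and $(d_1)_*$ to the defining zigzag in $(Ch_{\mathbb{K}}^{\bullet\rightarrow\bullet})^{P_{\infty}}$.
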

\begin{proof}
Let $\varphi$ be a vertex of $P_{\infty}\{f\}_{[\phi_f]}$. There exists a zigzag of weak equivalences
$(f,\varphi)\stackrel{\sim}{\leftarrow}\bullet\stackrel{\sim}{\rightarrow}(f,\phi_f)$ in
$(Ch_{\mathbb{K}}^{\bullet\rightarrow\bullet})^{P_{\infty}}$, that is a commutative diagram
\[
\xymatrix{
X\ar[d]^{(f,\varphi)} & \bullet\ar[l]_-{\sim}\ar[r]^-{\sim} & X\ar[d]^{(f,\phi_f)} \\
Y & \bullet\ar[l]_-{\sim}\ar[r]^-{\sim} & Y }.
\]
This diagram induces two zigzags
\[
(d_0)_*(f,\varphi)=(X,d_0\circ\varphi)\stackrel{\sim}{\leftarrow}\bullet\stackrel{\sim}{\rightarrow}
(X,d_0\circ\phi_f)=(d_0)_*(f,\phi_f)=(X,\phi_X)
\]
and
\[
(d_1)_*(f,\varphi)=(Y,d_1\circ\varphi)\stackrel{\sim}{\leftarrow}\bullet\stackrel{\sim}{\rightarrow}
(Y,d_1\circ\phi_f)=(d_1)_*(f,\phi_f)=(Y,\phi_Y)
\]
so $(d_0)_*(f,\varphi)\in P_{\infty}\{X\}_{[\phi_X]}$
and $(d_1)_*(f,\varphi)\in P_{\infty}\{Y\}_{[\phi_Y]}$.
Hence the zigzag
\[
P_{\infty}\{X\}\stackrel{\sim}{\leftarrow}P_{\infty}\{f\}\stackrel{\sim}{\rightarrow} P_{\infty}\{Y\}.
\]
restricts to a zigzag
\[
P_{\infty}\{X\}_{[\phi_X]}\leftarrow P_{\infty}\{f\}_{[\phi_f]}\rightarrow P_{\infty}\{Y\}_{[\phi_Y]}.
\]
Since we have
\[
\pi_n(P_{\infty}\{X\}_{[\phi_X]},\psi) = \pi_n(P_{\infty}\{X\},\psi)
\]
for $n>0$ and $\psi$ homotopic to $\phi$, the maps $(d_0)_*$ and $(d_0)_*$ induces isomorphisms of higher homotopy groups between the local realization spaces.

Now we want to check that $\pi_0(d_0)_*$ is an injection and $\pi_0(d_1)_*$ a bijection.
The surjectivity of $\pi_0(d_1)_*$ follows from the lifting diagram
\[
\xymatrix{
 & End_f \ar@{->>}[d]_{\sim}^{d_1}\\
P_{\infty}\ar[r]_-{\varphi} \ar[ur]& End_Y
}
\]
for any $\varphi\in _{\infty}\{Y\}_{[\phi_Y]}$, in which the lift exists because $d_1$ is an acyclic fibration and $P_{\infty}$ is cofibrant.
For the injectivity, we have to prove that given $\varphi_f,\varphi_f':P_{\infty}\rightarrow End_f \in P_{\infty}\{f\}_{[\phi_f]}$,
if $d_1\circ\varphi_f$ and $d_1\circ\varphi_f'$ are homotopic then $\varphi_f$ and $\varphi_f'$ are homotopic.
By \cite[Proposition 2.5]{Yal4}, the componentwise tensor product $End_Y\otimes A_{PL}(\Delta^1)$ (where $A_{PL}$ is Sullivan's functor of piecewise linear forms in rational homotopy theory \cite{Sul}) gives a path object on $End_X$, so a homotopy between
$d_1\circ\varphi_f$ and $d_1\circ\varphi_f'$ is given by a map
\[
H:P_{\infty}\rightarrow End_Y\otimes A_{PL}(\Delta^1).
\]
This map lifts to a map
\[
\xymatrix{
 & End_f \ar@{->>}[d]_{\sim}^{d_1\otimes A_{PL}(\Delta^1)}\\
P_{\infty}\ar[r]_-H \ar[ur]^{H_f}& End_Y \otimes A_{PL}(\Delta^1)
}.
\]
Indeed, acyclic fibrations of props are determined componentwise in cochain complexes, and the tensor product of cochain complexes preserves acyclic surjections,
so $d_1\otimes A_{PL}(\Delta^1)$ is an acyclic fibration. The lift $H_f$ is the desired homotopy.

The injectivity of $\pi_0(d_0)_*$ follows from the following argument.
The homotopy fiber $hofib_{\varphi}((d_0)_*)$ of $(d_0)_*:P_{\infty}\{f\}\stackrel{\sim}{\rightarrow}P_{\infty}\{X\}$ over a point $\varphi:P_{\infty}\rightarrow End_X$
is the simplicial set generated by the lifts
\[
\xymatrix{
 & End_f \ar[d]_{\sim}^{d_0}\\
P_{\infty}\ar[r]_-{\varphi} \ar[ur]& End_X
}
\]
along the weak equivalence $d_0$.
By a general homotopical algebra argument, the set of lifts along a weak equivalence is connected.
Indeed, if $\varphi_f,\varphi_f':P_{\infty}\rightarrow End_f$ are two lifts of $\varphi$, then in the homotopy category of props $Ho(\mathcal{P})$ we have
\[
[d_0]\circ [\varphi_f]=[d_0\circ\varphi_f]=[d_0\circ\varphi_f']=[d_0]\circ [\varphi_f']
\]
where $[-]$ stands for the homotopy class of a map, that is, its image under the localization functor $\mathcal{P}\rightarrow Ho(\mathcal{P})$.
Since $d_0$ is a weak equivalence, the homotopy class $[d_0]$ is an isomorphism in $Ho(\mathcal{P})$, hence
\[
[\varphi_f]=[\varphi_f'].
\]
This means that $\varphi_f$ and $\varphi_f'$ belongs to the same connected component of $hofib_{\varphi}((d_0)_*)$.
Considering the exact sequence of pointed sets
\[
\pi_0hofib_{\varphi}((d_0)_*)\rightarrow \pi_0P_{\infty}\{f\}\stackrel{\pi_0(d_0)_*}{\rightarrow}\pi_0P_{\infty}\{X\}
\]
induced by the homotopy fiber sequence
\[
hofib_{\varphi}((d_0)_*)\rightarrow P_{\infty}\{f\}\stackrel{\sim}{\rightarrow}P_{\infty}\{X\},
\]
this implies that $\pi_0(d_0)_*$ is injective.
\end{proof}
The zigzag
\[
P_{\infty}\{X\}\leftarrow P_{\infty}\{f\}\rightarrow P_{\infty}\{Y\}
\]
restricts to a zigzag between the realization spaces
\[
Real_{P_{\infty}}(H^*X)\leftarrow Real{P_{\infty}}(H^*f)\rightarrow Real_{P_{\infty}}(H^*Y).
\]
Indeed, if $\phi_f$ realizes $H^*f$, then $d_0\circ\phi_f$ and $d_1\circ\phi_f$ realize
respectively $\phi_X$ and $\phi_Y$ according to the following commutative diagrams:
consider
\[
\xymatrix{
End_f\ar@{->>}[r]^{d_0}\ar[d]_{\rho_f} & End_X\ar[d]^{\rho_X} \\
End_{H^*f}\ar@{->>}[r] & End_{H^*X}
}
\]
and apply the cohomology functor to obtain
\[
\xymatrix{
H^*End_f\ar@{->>}[r]^{H^*d_0}\ar[d] & H^*End_X\ar[d] \\
End_{H^*f}\ar@{->>}[r] & End_{H^*X}
}
\]
and finally
\[
\xymatrix{
P \ar[dr]^{H^*\phi_f} \ar@/^{1pc}/[drr]^{H^*\phi_X} \ar@/^{-1pc}/[ddr] & & \\
 & H^*End_f\ar@{->>}[r]^{H^*d_0}\ar[d] & H^*End_X\ar[d] \\
 & End_{H^*f}\ar@{->>}[r] & End_{H^*X}
}.
\]
The same argument holds for $Y$.

In particular, to understand the higher homotopy groups it is sufficient to study the realization space of $P\rightarrow End_{H^*X}$
in $P_{\infty}\rightarrow End_{H^*X}$, and the set of equivalences classes of realizations $P_{\infty}\rightarrow End_{H^*X}$ injects into
the set of equivalences classes of realizations $P_{\infty}\rightarrow End_X$.
For this, we introduce a variation of the realization space, denoted by $\overline{Real}_{P_{\infty}}(H^*X)$, which is a Kan complex whose vertices are the prop morphisms $P_{\infty}\rightarrow End_{H^*X}$ inducing the $P$-algebra structure $P\rightarrow End_{H^*X}$
in cohomology. Similarly to the realization space in Proposition 2.6, it admits a decomposition
\[
\overline{Real_{P_{\infty}}}(H^*X) = \coprod_{[\phi]\in\pi_0\mathcal{N}\mathcal{R}^{P_{\infty}}}P_{\infty}\{H^*X\}_{[\phi]}.
\]
By Proposition 2.13, there is an injection of connected components
\[
\pi_0\overline{Real}_{P_{\infty}}(H^*X)\hookrightarrow \pi_0Real_{P_{\infty}}(H^*X)
\]
and isomorphisms of homotopy groups
\[
\pi_n(\overline{Real}_{P_{\infty}}(H^*X),\varphi) \cong \pi_n(Real_{P_{\infty}}(H^*X),\varphi)
\]
for $n>0$.

\subsection{Realizations, homotopy automorphisms and obstruction theory}

Our main result describes the connected components of local realization spaces as quotient of
automorphisms groups and the higher homotopy groups as cohomology groups of a deformation complex.
Moreover, for $n\geq 2$ we recover the higher homotopy groups of the nerve $\mathcal{N}\mathcal{R}^{P_{\infty}}$.
\begin{thm}
For every $\phi:P_{\infty}\rightarrow End_{H^*X}$,
there is an injection
\[
Aut_{\mathbb{K}}(H^*X)/Aut_{Ho(Ch_{\mathbb{K}}^{P_{\infty}})}(H^*X,\phi)\hookrightarrow\pi_0P_{\infty}\{H^*X\}_{[\phi]}
\]
in the local realization space $P_{\infty}\{H^*X\}_{[\phi]}$ of $H^*X$ at $\phi$, and the higher homotopy groups are given by
\begin{eqnarray*}
\pi_1(P_{\infty}\{H^*X\}_{[\phi]},\phi) & \cong & ker(\pi_1(\pi)) \\
 & = & \pi_0Real_{P_{\infty}}(id_{(H^*X,\phi)}) \\
 & \cong & H^0Der_{\phi}(P_{\infty}, End_{H^*X})
\end{eqnarray*}
and for $n\geq2$,
\begin{eqnarray*}
\pi_n(P_{\infty}\{H^*X\}_{[\phi]},\phi) & \cong & \pi_n(\mathcal{N}\mathcal{R}^{P_{\infty}},(H^*X,\phi))\\
 & \cong & \pi_{n-1}(L^HwCh_{\mathbb{K}}^{P_{\infty}}((H^*X,\phi),(H^*X,\phi)),id) \\
 & \cong & H^{1-n}Der_{\phi}(P_{\infty}, End_{H^*X})\\
\end{eqnarray*}
where $L^H$ is the hammock localization functor constructed in \cite{DK2}.
\end{thm}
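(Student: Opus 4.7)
The plan is to apply the long exact sequence of homotopy groups of the Kan fibration from Proposition 2.9 (produced by the simplicial Borel construction of Theorem 2.8) in the special case where the underlying complex is $H^*X$, which has zero differential so that the hypotheses of Theorem 2.8 are met. The three ingredients already available suffice: the weak equivalences in the commutative square of Theorem 2.8, the Dwyer--Kan identification \cite{DK3} of components of classification spaces with classifying complexes of homotopy automorphisms, and the isomorphism $\pi_n(P_{\infty}\{Y\},\phi)\cong H^{1-n}Der_{\phi}(P_{\infty},End_Y)$ for $n\geq 1$ of \cite[Corollary 2.18]{Yal4}.

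First I would identify the three terms of the long exact sequence at the basepoint $\phi$. The base $\overline{W}Aut_{\mathbb{K}}(H^*X)$ is a discrete $K(Aut_{\mathbb{K}}(H^*X),1)$, so $\pi_1 = Aut_{\mathbb{K}}(H^*X)$ and $\pi_n = 0$ for $n\neq 1$. The total space, weakly equivalent by Theorem 2.8 to $diag\mathcal{N}fwCh_{\mathbb{K}}^{P\otimes\Delta^{\bullet}}|_{H^*X}$, has as the connected component of its basepoint exactly the component of $\mathcal{N}\mathcal{R}^{P_{\infty}}$ at $(H^*X,\phi)$; by \cite{DK3} its $n$-th homotopy group there is $\pi_{n-1}(L^HwCh_{\mathbb{K}}^{P_{\infty}}((H^*X,\phi),(H^*X,\phi)),id)$. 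In particular $\pi_1$ of the total space is $Aut_{Ho(Ch_{\mathbb{K}}^{P_{\infty}})}(H^*X,\phi)$, and $\pi_1(\pi)$ is the underlying-isomorphism map induced by the forgetful functor.

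For $n\geq 2$ the vanishing of $\pi_n(B)$ and $\pi_{n+1}(B)$ forces $\pi_n(F)\cong \pi_n(E)$, yielding the first two isomorphisms in the $n\geq 2$ part; the third then comes from \cite[Corollary 2.18]{Yal4}, since $\pi_n$ of a local realization space at $\phi$ equals $\pi_n$ of the ambient $P_{\infty}\{H^*X\}$ for $n\geq 1$. For $n=1$ the segment
\[0\to \pi_1(F)\to \pi_1(E)\stackrel{\pi_1(\pi)}{\to} Aut_{\mathbb{K}}(H^*X)\to \pi_0(F)\to \pi_0(E)\]
directly gives $\pi_1(F)=\ker(\pi_1(\pi))$, while \cite[Corollary 2.18]{Yal4} again supplies the isomorphism with $H^0Der_{\phi}(P_{\infty},End_{H^*X})$; the interpretation of the kernel as $\pi_0Real_{P_{\infty}}(id_{(H^*X,\phi)})$ amounts to reading homotopy classes of self weak equivalences lifting $id_{H^*X}$ as realizations of the identity morphism. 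Finally, the general injection $\pi_1(B)/Im(\pi_1(\pi))\hookrightarrow \pi_0(F)$ recalled in Section 2.1 becomes exactly the claimed part (1) once $Im(\pi_1(\pi))$ is identified with $Aut_{Ho(Ch_{\mathbb{K}}^{P_{\infty}})}(H^*X,\phi)$ viewed as a subgroup of $Aut_{\mathbb{K}}(H^*X)$.

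The main obstacle is the naturality check underlying the $n=0,1$ parts: one must verify that under the weak equivalences of Theorem 2.8 the map $\pi_1(\pi)$ is genuinely the forgetful map $Aut_{Ho(Ch_{\mathbb{K}}^{P_{\infty}})}(H^*X,\phi)\to Aut_{\mathbb{K}}(H^*X)$, so that its image may serve as the denominator of the quotient, and that its kernel really matches both $\pi_0Real_{P_{\infty}}(id_{(H^*X,\phi)})$ and $H^0Der_{\phi}$. Everything else is long-exact-sequence bookkeeping plus direct appeal to Theorem 2.8, Proposition 2.9, \cite{DK3} and \cite[Corollary 2.18]{Yal4}.
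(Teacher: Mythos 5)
Your proposal is correct and follows essentially the same route as the paper: the long exact sequence of the Borel-construction fibration from Theorem 2.8/Proposition 2.9, the discreteness of $\overline{W}Aut_{\mathbb{K}}(H^*X)$, the Dwyer--Kan identification of the total space's components with classifying complexes of hammock-localization automorphisms (delooped to get $\pi_{n-1}(L^HwCh_{\mathbb{K}}^{P_{\infty}})$), the boundary-map injection for $\pi_0$, and \cite[Corollary 2.18]{Yal4} for the deformation-complex cohomology. The only point the paper adds that you omit is the remark that \cite[Corollary 2.18]{Yal4} is stated for properads and its extension to props uses the deformation complexes of \cite{Mar2}.
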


Let us just recall briefly that for any $P_{\infty}$-algebra $(X,\varphi)$, the hammocks of weight $k$ and length $n$, which define the $k$-simplices of the
simplicial monoid $L^HwCh_{\mathbb{K}}^{P_{\infty}}((X,\varphi),(X,\varphi))$ consist in the data of $k-1$ chains of morphisms of length $n$
\[
X\stackrel{f_0^1}{\leftarrow}\bullet...\bullet\stackrel{f_0^n}{\rightarrow}X,
\]
...,
\[
X\stackrel{f_k^1}{\leftarrow}\bullet...\bullet\stackrel{f_k^n}{\rightarrow}X
\]
organized in a commutative diagram (a ``hammock")
\[
\xymatrix{
 & C_{0,1}\ar[dl]_{f_0^1}\ar[d]^{...}\ar[r] & C_{0,2}\ar[d]^{...} &...\ar[l] &C_{0,n}\ar[l]_{f_0^{n-1}}\ar[d]^{...} \ar[dr]^{f_0^n} & \\
X & ... \ar[l]\ar[d]^{...}& ... \ar[d]^{...} & ...\ar[l] & ...\ar[d]^{...}\ar[r] & X \\
& C_{k,1}\ar[ul]^{f_n^1}\ar[r] & C_{k,2} & ...\ar[l] &C_{k,n}\ar[l]_{f_k^{n-1}} \ar[ur]_{f_0^n} & \\
}
\]
where vertical arrows and arrows going from right to left are weak equivalences.
The simplicial monoid $L^HwCh_{\mathbb{K}}^{P_{\infty}}((X,\varphi),(X,\varphi))$ encodes the homotopy automorphisms of $(X,\varphi)$.
When $P$ is an operad, then the $P_{\infty}$-algebras form a model category and we recover
the usual simplicial monoid of self weak equivalences $haut(X,\varphi)$.

\begin{cor}
There is an injection
\[
\coprod_{[\phi]\in\pi_0\mathcal{N}\mathcal{R}^{P_{\infty}}} Aut_{\mathbb{K}}(H^*X)/Aut_{Ho(Ch_{\mathbb{K}}^{P_{\infty}})}(H^*X,\phi)\hookrightarrow \pi_0Real_{P_{\infty}}(H^*X).
\]
For $n\geq 1$, the $n^{th}$ homotopy group of  $Real_{P_{\infty}}(H^*X)$ is given by
\begin{eqnarray*}
\pi_n(\overline{Real_{P_{\infty}}}(H^*X),\phi) & = & \pi_n(Real_{P_{\infty}}(H^*X),\phi) \\
 & = & \pi_n(P_{\infty}\{H^*X\}_{\phi},\phi) \\
 & = & \pi_n(P_{\infty}\{H^*X\}_{[\phi]},\phi) \\
 & \cong & H^{1-n}Der_{\phi}(P_{\infty},End_{H^*X})
\end{eqnarray*}
where $P_{\infty}\{H^*X\}_{\phi}$ is the connected component of $\phi$
in $P_{\infty}\{H^*X\}$.
\end{cor}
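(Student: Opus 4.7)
The plan is to deduce Corollary 2.15 formally from Theorem 2.14 by combining it with two structural inputs already in hand: the decomposition of realization spaces into local pieces (Proposition 2.6, together with its analogue for $\overline{Real_{P_{\infty}}}(H^*X)$ recorded just above the statement of the corollary) and the transfer zigzag $Real_{P_{\infty}}(H^*X)\leftarrow Real_{P_{\infty}}(H^*f)\rightarrow \overline{Real_{P_{\infty}}}(H^*X)$ induced by the acyclic fibration $f:X\twoheadrightarrow H^*X$ (Proposition 2.13).

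For the injection on connected components, I would first decompose
\[
\pi_0\overline{Real_{P_{\infty}}}(H^*X)=\coprod_{[\phi]\in\pi_0\mathcal{N}\mathcal{R}^{P_{\infty}}}\pi_0 P_{\infty}\{H^*X\}_{[\phi]}
\]
and then feed in the injection $Aut_{\mathbb{K}}(H^*X)/Aut_{Ho(Ch_{\mathbb{K}}^{P_{\infty}})}(H^*X,\phi)\hookrightarrow\pi_0 P_{\infty}\{H^*X\}_{[\phi]}$ given by Theorem 2.14 on each summand. Composing the resulting coproduct injection with the injection $\pi_0\overline{Real_{P_{\infty}}}(H^*X)\hookrightarrow\pi_0 Real_{P_{\infty}}(H^*X)$ furnished by Proposition 2.13 yields the map claimed in the first part of the corollary; injectivity is automatic from the injectivity of each constituent map.

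For the chain of identifications of higher homotopy groups, I would read the equalities off one at a time. The first, $\pi_n(\overline{Real_{P_{\infty}}}(H^*X),\phi)=\pi_n(Real_{P_{\infty}}(H^*X),\phi)$ for $n\geq 1$, is exactly the isomorphism of higher homotopy groups established in Proposition 2.13. The next two equalities are formal consequences of the fact that for $n\geq 1$ the group $\pi_n$ at a basepoint only depends on the connected component containing that basepoint: by the decomposition of $\overline{Real_{P_{\infty}}}(H^*X)$ the connected component of $\phi$ is $P_{\infty}\{H^*X\}_{\phi}$, and this component sits inside $P_{\infty}\{H^*X\}_{[\phi]}$ as a connected component of the local realization space, so all three $\pi_n$'s coincide. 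The final isomorphism with $H^{1-n}Der_{\phi}(P_{\infty},End_{H^*X})$ is precisely Theorem 2.14.

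The main piece of bookkeeping will be tracking basepoints across the transfer zigzag: a vertex $\phi:P_{\infty}\rightarrow End_{H^*X}$ of $\overline{Real_{P_{\infty}}}(H^*X)$ has to be matched with a vertex $\widetilde{\phi}:P_{\infty}\rightarrow End_X$ of $Real_{P_{\infty}}(H^*X)$ lying in the same connected component under the equivalences, which is supplied by the lifting along the acyclic fibration $d_1:End_f\twoheadrightarrow End_X$ used in the proof of Proposition 2.13 (using cofibrancy of $P_{\infty}$). Once this matching of basepoints is in place, no further independent computation is required: the corollary is simply a packaging of Theorem 2.14 with the decomposition and transfer results already proved in Section 2.
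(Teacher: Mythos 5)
Your proposal is correct and follows essentially the same route as the paper, which obtains the corollary by combining Theorem 2.14 applied to each local piece $P_{\infty}\{H^*X\}_{[\phi]}$ with the decomposition $\overline{Real_{P_{\infty}}}(H^*X)=\coprod_{[\phi]\in\pi_0\mathcal{N}\mathcal{R}^{P_{\infty}}}P_{\infty}\{H^*X\}_{[\phi]}$ and the comparison of Proposition 2.13 (injection on $\pi_0$, isomorphisms on $\pi_n$ for $n>0$). The only slip is in the basepoint bookkeeping: the acyclic fibration is $d_1:End_f\twoheadrightarrow End_{H^*X}$ (the target of $f:X\twoheadrightarrow H^*X$), so one lifts $\phi$ along $d_1$ to $\phi_f:P_{\infty}\rightarrow End_f$ and then takes $d_0\circ\phi_f$ as the matching vertex of $Real_{P_{\infty}}(H^*X)$; this does not affect the argument.
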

\begin{rem}
The long exact sequence associated to $P_{\infty}\{H^*X\}$ tells us that there is an injection
of $\pi_1(P_{\infty}\{H^*X\},\phi)$ into
\begin{eqnarray*}
\pi_1(\overline{W}L^HwCh_{\mathbb{K}}^{P_{\infty}}((H^*X,\phi),(H^*X,\phi)),id)
 & \cong & \pi_0L^HwCh_{\mathbb{K}}^{P_{\infty}}((H^*X,\phi),(H^*X,\phi)) \\
 & = & Aut_{Ho(Ch_{\mathbb{K}}^{P_{\infty}})}(H^*X,\phi)
\end{eqnarray*}
and that if $Aut(H^*X)=\{id\}$, then this injection is an isomorphism.
\end{rem}

\begin{proof}
There is a long exact sequence
\begin{eqnarray*}
...\rightarrow \pi_{n+1}(\overline{W}Aut_{\mathbb{K}}(H^*X),id) & \stackrel{\partial}{\rightarrow} &
\pi_n(P_{\infty}\{H^*X\}_{[\phi]}^f,\phi) \\
\rightarrow \pi_n(WAut_{\mathbb{K}}(H^*X)\times_{Aut_{\mathbb{K}}(H^*X)} P_{\infty}\{H^*X\}_{[\phi]}^f,(id,\phi))
& \stackrel{\pi_n(\pi)}{\rightarrow} & \pi_n(\overline{W}Aut_{\mathbb{K}}(H^*X),id)...
\end{eqnarray*}
Given that $Aut_{\mathbb{K}}(H^*X)$ is a discrete group, the higher homotopy groups of its classifying complex are
\[
\pi_1\overline{W}Aut_{\mathbb{K}}(H^*X)\cong Aut_{\mathbb{K}}(H^*X)
\]
and
\[
\pi_n\overline{W}Aut_{\mathbb{K}}(H^*X)=0
\]
otherwise.
Consequently, the group $\pi_1(P_{\infty}\{X\}_{[\phi]},\phi)$ is nothing but $ker(\pi_1(\pi))$,
which in turn corresponds to $\pi_0Real_{P_{\infty}}(id_{(H^*X,\phi)})$.

For $n\geq 2$,
\begin{eqnarray*}
\pi_n(P_{\infty}\{H^*X\}_{[\phi]}^f,\phi) & \cong & \pi_n(WAut_{\mathbb{K}}(H^*X)\times_{Aut_{\mathbb{K}}(H^*X)} P_{\infty}\{H^*X\}_{[\phi]}^f,(id,\phi)) \\
 & \cong & \pi_n(diag\mathcal{N}fwCh_{\mathbb{K}}^{P_{\infty}\otimes\Delta^{\bullet}}|_{H^*X},(H^*X,\phi)) \\
 & \cong & \pi_n(diag\mathcal{N}fwCh_{\mathbb{K}}^{P_{\infty}\otimes\Delta^{\bullet}}|_{(H^*X,\phi)},(H^*X,\phi)) \\
 & \cong & \pi_n(\mathcal{N}wCh_{\mathbb{K}}^{P_{\infty}}|_{H^*X},(H^*X,\phi)) \\
 & \cong & \pi_n(\overline{W}L^HwCh_{\mathbb{K}}^{P_{\infty}}((H^*X,\phi),(H^*X,\phi)),id)
\end{eqnarray*}
where the first line is a consequence of the long exact sequence, the second line follows from the comparison of fiber sequences of Theorem 2.8, the fourth line follows from \cite{Yal2}, and the fifth line is a result of Dwyer-Kan \cite{DK3}.
Recall that for any simplicial monoid $G$, the complex $\overline{W}G$ is a delooping of $G$,
that is, we have $\Omega_e\overline{W}G=G$ where $\Omega_e(-)$ is the loop space pointed at the neutral
element $e$. We deduce that
\[
\pi_{n+1}(\overline{W}G,e) \cong \pi_n(\Omega_e\overline{W}G,e) \cong \pi_n(G,e),
\]
hence
\[
\pi_n(\overline{W}L^HwCh_{\mathbb{K}}^{P_{\infty}}((H^*X,\phi),(H^*X,\phi)),id)
\cong \pi_{n-1}(L^HwCh_{\mathbb{K}}^{P_{\infty}}((H^*X,\phi),(H^*X,\phi)),id).
\]

We also get
\begin{eqnarray*}
 & \pi_1(WAut_{\mathbb{K}}(H^*X)\times_{Aut_{\mathbb{K}}(H^*X)} P_{\infty}\{H^*X\}_{[\phi]}^f,(id,\phi)) & \\
 \cong & \pi_1(\overline{W}L^HwCh_{\mathbb{K}}^{P_{\infty}}((H^*X,\phi),(H^*X,\phi)),id) & \\
 \cong & \pi_0(L^HwCh_{\mathbb{K}}^{P_{\infty}}((H^*X,\phi),(H^*X,\phi))) & \\
  = & Aut_{Ho(Ch_{\mathbb{K}}^{P_{\infty}})}(H^*X,\phi) &
\end{eqnarray*}
by equivalence of fiber sequences.
The image of $\pi_1(\pi)$ in $Aut_{\mathbb{K}}(H^*X)$ is the subgroup of graded automorphisms
which can be realized as a zigzag of quasi-isomorphisms
$(H^*X,\phi)\stackrel{\sim}{\leftarrow}\bullet\stackrel{\sim}{\rightarrow}(H^*X,\phi)$
in $Ch_{\mathbb{K}}^{P_{\infty}}$.
For convenience, we will denote the quotient $Aut_{\mathbb{K}}(H^*X)/Im(\pi_1(\pi))$ by $Aut_{\mathbb{K}}(H^*X)/Aut_{Ho(Ch_{\mathbb{K}}^{P_{\infty}})}(H^*X,\phi)$.
For $n=0$, the boundary
\[
\partial_1:\pi_1(\overline{W}Aut_{\mathbb{K}}(H^*X))=Aut_{\mathbb{K}}(H^*X)\rightarrow \pi_0P_{\infty}\{X\}_{[\phi]}
\]
gives the desired injection
\[
Aut_{\mathbb{K}}(H^*X)/Aut_{Ho(Ch_{\mathbb{K}}^{P_{\infty}})}(H^*X,\phi) \hookrightarrow\pi_0P_{\infty}\{X\}_{[\phi]}.
\]

To conclude, the identification of higher homotopy groups with the cohomology of deformation complexes follows from      \cite[Corollary 2.18]{Yal4}.
Although this result is originally stated for properads, its extension to props is straightforward by using the deformation complexes of algebras over props
introduced in \cite{Mar2}. Indeed, such deformation complexes possess a filtered $L_{\infty}$-algebra structure
to which the arguments of the proof of \cite[Theorem 2.7]{Yal4} apply as well.
\end{proof}

A crucial question in deformation theory is to know how to relate the $P_{\infty}$-algebra
homotopy automorphisms of $X$ with the strict $P$-algebra automorphisms of its cohomology $H^*X$.
For this, we define a map
\[
L^HwCh_{\mathbb{K}}^{P_{\infty}}(X,X)_0 \rightarrow Aut_P(H^*X)
\]
which sends any zigzag
\[
X\stackrel{f_1}{\leftarrow}\bullet\stackrel{f_2}{\rightarrow}...\stackrel{f_n}{\rightarrow} X
\]
on the composite $H^*f_n \circ ...\circ H^*f_2\circ (H^*f_1)^{-1}$
where every arrow going to the left is reversed by taking its inverse (since it is an isomorphism).
We check that this map factors through homotopy classes of vertices.
It is sufficient to verify it for zigzag of length two, since the argument in any length is the same.
Let $X\stackrel{f_0}{\leftarrow}\bullet\stackrel{g_0}{\rightarrow}X$ and
$X\stackrel{f_1}{\leftarrow}\bullet\stackrel{g_1}{\rightarrow}X$ be two such zigzags.
Suppose there is a homotopy
\[
\xymatrix{
 & \bullet \ar[ld]_{f_0} \ar[rd]_{g_0} \ar[dd]^h & \\
X & & X \\
 & \bullet \ar[ul]^{f_1} \ar[ur]_{g_1} &
}
\]
relating these two vertices.
Applying the cohomology functor $H^*(-)$ to this diagram, we obtain
$H^*f_0 = H^*f_1\circ H^*h$ and $H^*g_0=H^*g_1\circ h$, hence
\begin{eqnarray*}
H^*g_0\circ (H^*f_0)^{-1} & = & H^*g_1\circ H^*h\circ (H^*h)^{-1}\circ (H^*f_1)^{-1} \\
 & = & H^*g_1\circ (H^*f_1)^{-1}.
\end{eqnarray*}
This allows us to define a map at the level of connected components
\[
\overline{H^*}:\pi_0 L^HwCh_{\mathbb{K}}^{P_{\infty}}(X,X)=Aut_{Ho(Ch_{\mathbb{K}}^{P_{\infty}})}(X)
\rightarrow Aut_{Ho(Ch_{\mathbb{K}}^P)}(H^*X).
\]

\noindent
\textbf{Questions.}

(1) Is this map injective, surjective ?

(2) How to compute $\pi_0\mathcal{N}\mathcal{R}^{P_{\infty}}$ ?

\begin{rem}
If the differential of $X$ is zero, then this map is surjective. Indeed, if we consider $(X,\phi)\in
Ch_{\mathbb{K}}^P$, and $(X,\varphi)\in Ch_{\mathbb{K}}^{P_{\infty}}$, then the following diagram
commutes:
\[
\xymatrix{
 & End_{X_0\stackrel{\cong}{\rightarrow}X_1} \ar[d]_{\pi} \ar[r]^{\pi} & End_{X_1} \\
 P_{\infty}\ar[ur]^{s\circ\varphi} \ar[r]^{\varphi} & End_{X_1} \ar@/^1pc/[u]^s \ar@/_1pc/[ur]_{\pi\circ s=id}
}
\]
where $\pi$ is the canonical projection of the endomorphism prop of a diagram onto the endomorphism
prop of a subdiagram, which in this case is an isomorphism with inverse $s$.
\end{rem}

There is no well defined obstruction theory for algebras over a general prop.
It is thus difficult to get explicit obstruction groups for the connectedness of $\pi_0\mathcal{N}\mathcal{R}^{P_{\infty}}$,
and to know if $\overline{H^*}$ is injective. We can however simplify the computation of connected components of the local realization space of the trivial $P_{\infty}$-algebra structure (which are already non trivial), a result which will be useful in Section 5:
\begin{lem}
Let $P$ be a prop, and $X$ a cochain complex such that $H^*X$ forms a $P$-algebra.
Let $\phi:P_{\infty}\rightarrow P\rightarrow End_{H^*X}$ be the trivial $P_{\infty}$-algebra structure
on $H^*X$ induced by its $P$-algebra structure. We have
\[
Aut_{\mathbb{K}}(H^*X)/Aut_P(H^*X)\cong Aut_{\mathbb{K}}(H^*X)/Aut_{Ho(Ch_{\mathbb{K}}^{P_{\infty}})}(H^*X)\hookrightarrow\pi_0P_{\infty}\{H^*X\}_{[\phi]}.
\]
\end{lem}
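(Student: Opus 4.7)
The plan is to invoke Theorem 2.14 at the trivial $P_\infty$-structure $\phi$ and to identify the subgroup $Aut_{Ho(Ch_{\mathbb{K}}^{P_\infty})}(H^*X,\phi)$ appearing in the denominator of that injection with the strict $P$-algebra automorphism group $Aut_P(H^*X)$. Recall that, by the convention fixed just before the proof of Theorem 2.14, $Aut_{Ho(Ch_{\mathbb{K}}^{P_\infty})}(H^*X,\phi)$ is nothing but the image of $\pi_1(\pi)$ viewed inside $Aut_{\mathbb{K}}(H^*X)$, namely the subgroup of graded automorphisms of $H^*X$ arising as $\overline{H^*}$ of some zigzag of $P_\infty$-quasi-isomorphisms $(H^*X,\phi)\stackrel{\sim}{\leftarrow}\bullet\cdots\bullet\stackrel{\sim}{\rightarrow}(H^*X,\phi)$. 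It therefore suffices to prove the equality of subgroups $Im(\pi_1(\pi))=Aut_P(H^*X)$ in $Aut_{\mathbb{K}}(H^*X)$; Theorem 2.14 then yields the announced injection verbatim.

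The inclusion $Aut_P(H^*X)\subseteq Im(\pi_1(\pi))$ is immediate. Since $\phi$ factors as $P_\infty\stackrel{\sim}{\twoheadrightarrow}P\rightarrow End_{H^*X}$, any strict $P$-algebra automorphism $g$ of $H^*X$ is automatically a $P_\infty$-algebra automorphism of $(H^*X,\phi)$. Viewed as the length-one zigzag $(H^*X,\phi)\stackrel{=}{\leftarrow}(H^*X,\phi)\stackrel{g}{\rightarrow}(H^*X,\phi)$, its image under $\overline{H^*}$ is $g$ itself, hence $g\in Im(\pi_1(\pi))$.

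Conversely, suppose $\alpha\in Im(\pi_1(\pi))$ is realized by a zigzag of $P_\infty$-quasi-isomorphisms $(H^*X,\phi)\stackrel{f_1}{\leftarrow}\bullet\stackrel{g_1}{\rightarrow}\cdots\stackrel{g_n}{\rightarrow}(H^*X,\phi)$. Applying the symmetric monoidal cohomology functor aritywise to this diagram, as in the proof of Lemma 2.4 via the diagram endomorphism prop, promotes it to a commutative diagram of strict $P$-algebra morphisms between the induced $P$-algebras on cohomology. Because $H^*X$ has zero differential and $\phi$ factors through $P$, the $P$-algebra induced on the cohomology of $(H^*X,\phi)$ is canonically identified with the original $P$-algebra we began with; consequently each $H^*f_i$ and $H^*g_j$ is a $P$-algebra isomorphism and the alternating composite $\alpha=H^*g_n\circ\cdots\circ(H^*f_1)^{-1}$ lies in $Aut_P(H^*X)$. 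The one delicate point is checking that the strict $P$-structure produced on the cohomology of $(H^*X,\phi)$ really agrees on the nose with the one we started from, which reduces to the naturality of the prop morphism $End_{(-)}\rightarrow End_{H^*(-)}$ (the special case of \cite[Proposition 3.4.7]{Fre1} for the symmetric monoidal functor $H^*$) with respect to the projection $P_\infty\twoheadrightarrow P$. Granting this identification, the two inclusions above combine to give $Im(\pi_1(\pi))=Aut_P(H^*X)$, and the announced injection follows from Theorem 2.14.
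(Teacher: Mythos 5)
Your argument is correct, but it is organized differently from the paper's own proof, and it is worth being aware of what each version establishes. The paper's proof takes for granted (from Lemma 2.4 and the definition of $\overline{H^*}$) that the image of any zigzag lands in $Aut_P(H^*X)$, quotes the surjectivity of $\overline{H^*}:Aut_{Ho(Ch_{\mathbb{K}}^{P_{\infty}})}(H^*X)\rightarrow Aut_P(H^*X)$ from Remark 2.17, and then spends all of its effort proving that $\overline{H^*}$ is \emph{injective}: given two zigzags $H^*X\stackrel{f_i}{\leftarrow}Z_i\stackrel{g_i}{\rightarrow}H^*X$ with the same image, it replaces each $Z_i$ by its cohomology $H^*Z_i$ equipped with the trivial structure via the acyclic fibration $p_{Z_i}$, and shows the two hammocks are homotopic. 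You do the opposite: you never address injectivity, and instead prove the two inclusions $Aut_P(H^*X)\subseteq Im(\pi_1(\pi))$ (your easy direction, which is exactly the surjectivity statement of Remark 2.17 — note that for a strict $P$-automorphism $g$ one even has $\phi_g=\phi$ on the nose, so $g$ trivially lies in the image of $\pi_1(\pi)$) and $Im(\pi_1(\pi))\subseteq Aut_P(H^*X)$ (your converse, which is Lemma 2.4 applied to a loop, together with the observation that the induced $P$-structure on $H^*(H^*X,\phi)$ is the original one because $\phi$ factors through $P$ and the differential vanishes). Since the quotient $Aut_{\mathbb{K}}(H^*X)/Aut_{Ho(Ch_{\mathbb{K}}^{P_{\infty}})}(H^*X,\phi)$ is, by the convention fixed in the proof of Theorem 2.14, precisely $Aut_{\mathbb{K}}(H^*X)/Im(\pi_1(\pi))$, identifying the two subgroups of $Aut_{\mathbb{K}}(H^*X)$ is indeed enough to deduce the displayed isomorphism of quotients and the injection, so your proof is complete for the lemma as stated and is arguably more economical. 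What you do not recover is the finer fact contained in the paper's injectivity argument, namely that $\overline{H^*}$ identifies the abstract group $Aut_{Ho(Ch_{\mathbb{K}}^{P_{\infty}})}(H^*X,\phi)$ itself with $Aut_P(H^*X)$ — with your argument alone, the homotopy automorphism group could a priori be strictly larger than $Aut_P(H^*X)$ and merely surject onto it inside $Aut_{\mathbb{K}}(H^*X)$. That stronger statement is what the paper's discussion of the map $\overline{H^*}$ (and its use in Section 5) is really after, so if you intend to reuse this lemma in the form ``homotopy automorphisms of the trivial structure are strict automorphisms,'' you would still need the paper's injectivity argument.
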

\begin{proof}
We already know that $\pi_0P_{\infty}\{H^*X\}_{[\phi]}\cong Aut_{\mathbb{K}}(H^*X)/Aut_{Ho(Ch_{\mathbb{K}}^{P_{\infty}})}(H^*X)$.
We also know that $\overline{H^*}:Aut_{Ho(Ch_{\mathbb{K}}^{P_{\infty}})}(H^*X)\rightarrow Aut_P(H^*X)$
is surjective. It remains to prove that $\overline{H^*}$ is injective.
Let $H^*X\stackrel{f_0}{\leftarrow}Z_0\stackrel{g_0}{\rightarrow}H^*X$ and
$H^*X\stackrel{f_1}{\leftarrow}Z_1\stackrel{g_1}{\rightarrow}H^*X$ be two vertices of
$L^HwCh_{\mathbb{K}}^{P_{\infty}}(X,X)$ such that $H^*g_1\circ (H^*f_1)^{-1}=H^*g_0\circ (H^*f_0)^{-1}$
in $Aut_P(H^*X)$. We get the commutative diagrams
\[
\xymatrix{
H^*X \ar[d]_{=} & Z_i \ar[l]_-{f_i}^-{\sim}\ar[r]^-{g_i}_-{\sim}\ar@{->>}[d]_-{\sim}^-{p_{Z_i}}
& H^*X \ar[d]^{=} \\
H^*X & H^*Z_i \ar[l]_-{H^*f_i}^-{\cong}\ar[r]^-{H^*g_i}_-{\cong} & H^*X
}
\]
for $i=0,1$ in $Ch_{\mathbb{K}}$ where $p_{Z_i}$ is the projection, which forms here an acyclic fibration
of cochain complexes.
The map $f_i$ is a morphism of $P_{\infty}$-algebras for $H^*X$ equipped with $\phi$, and $H^*f_i$
an isomorphism of $P$-algebras thus an isomorphism of $P_{\infty}$-algebras for $H^*X$ and $H^*Z_i$
equipped with the trivial $P_{\infty}$-algebra structures. If we denote by $[-]$ the homotopy class
of a zigzag in $L^HwCh_{\mathbb{K}}^{P_{\infty}}(X,X)$, we get
\begin{eqnarray*}
[H^*X\stackrel{f_0}{\leftarrow}Z_0\stackrel{g_0}{\rightarrow}H^*X] & = &
[H^*X\stackrel{H^*f_0}{\leftarrow}H^*Z_0\stackrel{H^*g_0}{\rightarrow}H^*X]\\
 & = & [H^*X\stackrel{H^*f_1}{\leftarrow}H^*Z_1\stackrel{H^*g_1}{\rightarrow}H^*X] \\
 & = & [H^*X\stackrel{f_1}{\leftarrow}Z_0\stackrel{g_2}{\rightarrow}H^*X],
\end{eqnarray*}
using that the equality $H^*g_1\circ (H^*f_1)^{-1}= H^*g_0\circ (H^*f_0)^{-1}$
implies the existence of a commutative diagram
\[
\xymatrix{
 & H^*Z_0 \ar[ld]_{H^*f_0} \ar[rd]_{H^*g_0} \ar[dd] & \\
H^*X & & H^*X \\
 & H^*Z_1 \ar[ul]^{H^*f_1} \ar[ur]_{H^*g_1} &
}
\]
where the middle vertical map is defined by $(H^*f_1)^{-1}\circ H^*f_0 = (H^*g_1)^{-1}\circ H^*g_0$.
\end{proof}

\section{The operadic case}

Operads are used to parameterize various kind of algebraic structures.
Fundamental examples of operads include the operad $As$ encoding associative algebras,
the operad $Com$ of commutative algebras, the operad $Lie$ of Lie algebras and the operad
$Pois$ of Poisson algebras. There exists several equivalent approaches for the
definition of an algebra over an operad, for which we refer the reader to \cite{LV}.
Differential graded algebras over operads satisfy good homotopical properties:
\begin{thm}(see \cite{Fre2})
Let $P$ be a dg operad over a field $\mathbb{K}$ of characteristic zero.
The category of dg $P$-algebras $Ch_{\mathbb{K}}^P$ inherits a cofibrantly generated model category structure such that
a morphism $f$ of $P$-algebras is

(i)a weak equivalence if the underlying chain morphism is a quasi-isomorphism;

(ii)a fibration if it is degreewise injective;

(iii)a cofibration if it has the left lifting property with respect to acyclic fibrations.

\end{thm}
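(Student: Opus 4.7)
The plan is to apply Kan's transfer principle to the free-forgetful adjunction $P(-) : Ch_{\mathbb{K}} \rightleftarrows Ch_{\mathbb{K}}^P : U$, where $P(X) = \bigoplus_{n\geq 0}(P(n) \otimes X^{\otimes n})_{\Sigma_n}$ is the free $P$-algebra functor. Weak equivalences and fibrations in $Ch_{\mathbb{K}}^P$ are defined to be those detected by $U$, as in the statement, and cofibrations are then characterized by the left lifting property. If $I$ and $J$ denote the standard generating cofibrations and generating acyclic cofibrations of $Ch_{\mathbb{K}}$, the proposed generating sets in $Ch_{\mathbb{K}}^P$ are $P(I)$ and $P(J)$.

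Two conditions remain to be verified. First, the smallness of the domains of $P(I)$ and $P(J)$ with respect to relative cell complexes reduces to smallness in $Ch_{\mathbb{K}}$, since $U$ preserves filtered colimits and a relative free cell attachment only produces a filtered colimit of finite attachments at the underlying level. Second, and more substantially, one must show that every relative $P(J)$-cell complex is a weak equivalence. This is where the characteristic zero hypothesis intervenes.

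The core step is thus to analyze a pushout
\[
\xymatrix{P(X)\ar[r]\ar[d]_{P(j)} & A\ar[d] \\ P(Y)\ar[r] & A\sqcup_{P(X)} P(Y)}
\]
for $j:X\to Y$ in $J$ by constructing an exhaustive filtration $A = A_0 \hookrightarrow A_1 \hookrightarrow \cdots$ of the pushout whose successive subquotients $A_k/A_{k-1}$ take the form $\bigl(P(k)\otimes A^{\otimes *}\otimes (Y/X)^{\otimes k}\bigr)_{\Sigma_k}$ up to combinatorial indexing. By Maschke's theorem, $\mathbb{K}[\Sigma_k]$ is semisimple when $char(\mathbb{K})=0$, so the functor of $\Sigma_k$-coinvariants is exact. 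Combined with the fact that tensoring with the acyclic complex $Y/X$ over a field preserves acyclicity, each $A_k/A_{k-1}$ is acyclic, whence $A \to A \sqcup_{P(X)} P(Y)$ is an acyclic cofibration of underlying complexes. A transfinite composition argument extends this conclusion to arbitrary relative $P(J)$-cell complexes.

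The main obstacle is precisely the pushout-filtration analysis above and the exactness of $\Sigma_k$-coinvariants on its layers; this is exactly what forces the characteristic zero hypothesis and what distinguishes the operadic case from the general prop setting, where the much more complicated combinatorics of prop compositions only yields a semi-model structure (as in Theorem 1.3). Once the pushout analysis is in hand, Kan's transfer theorem delivers the cofibrantly generated model category structure with the three classes described in the statement, and characterization $(iii)$ of cofibrations is automatic from the resulting cofibrantly generated structure.
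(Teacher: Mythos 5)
The paper gives no proof of this theorem and simply cites \cite{Fre2}; your argument is the standard transfer proof used there and in the related treatments by Hinich and Fresse: create weak equivalences and fibrations through the forgetful functor, take $P(I)$ and $P(J)$ as generating sets, check smallness via preservation of filtered colimits, and verify the acyclicity condition by filtering the pushout along a free map so that the layers are $\Sigma_k$-coinvariants of acyclic complexes, which is where Maschke's theorem and the characteristic zero hypothesis enter. This is correct and essentially the same approach; the only cosmetic point is that the filtration layers are more precisely expressed through the enveloping operad of $A$ rather than literally as $P(k)\otimes A^{\otimes *}\otimes(Y/X)^{\otimes k}$, which your ``up to combinatorial indexing'' already acknowledges.
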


The model category structure allows one to express the internal symmetries of the objects as homotopy automorphisms.
In a model category $M$, the homotopy automorphisms of an object $X$ is the simplicial sub-monoid
$haut(X^{cf})\subset Map(X^{cf},X^{cf})$ of invertible connected components, where $X^{cf}$ is a cofibrant-fibrant resolution
of $X$,  i.e
\[
haut(X^{cf})=\coprod_{\overline{\phi}\in [X,X]_{Ho(M)}^{\times}} Map(X^{cf},X^{cf})_{\phi}
\]
where the $\overline{\phi}\in [X,X]_{Ho(M)}^{\times}$ are the automorphisms in the homotopy category of $M$
and $Map(X^{cf},X^{cf})_{\phi}$ the connected component of $\phi$ in the standard homotopy mapping space.

In this setting, we define a map
\[
\overline{H^*}:\pi_0 haut_{P_{\infty}}(X_{\infty})\rightarrow Aut_P(H^*X_{\infty})=Aut_P(H^*X)
\]
where $X_{\infty}\stackrel{\sim}{\rightarrow}X$ is a cofibrant resolution of $X$ in the category
of $P_{\infty}$-algebras (recall that all algebras are fibrant in cochain complexes over a field).
According to \cite[Theorem 2.7, Theorem 3.5]{Hof}, we know that obstructions to be injective or surjective
lie in the $\Gamma$-cohomology groups of $H^*X$ as a $P$-algebra:

(1) if $H\Gamma^0_P(H^*X,H^*X)=0$, then $\overline{H^*}$ is injective;

(2) if $H\Gamma^1_P(H^*X,H^*X)=0$, then $\overline{H^*}$ is surjective and $\pi_0\mathcal{N}\mathcal{R}^{P_{\infty}}(H^*X)=*$, in particular
$\pi_0\mathcal{N}\mathcal{R}^{P_{\infty}}=*$.

\begin{rem}
If $P$ is a $\Sigma$-cofibrant operad, $A$  a $P$-algebra and $M$ a $U_P(A)$-module (that is, a module over the enveloping algebra of $A$), then $H\Gamma_P^*(A,M)=H_P^*(A,M)$ is the usual operadic cohomology.
When $\mathbb{K}$ is a field of characteristic zero, all operads are $\Sigma$-cofibrant.
\end{rem}

\begin{thm}
Let $P$ be a $\Sigma$-cofibrant graded operad with a trivial differential. Let $H^*X$ be a $P$-algebra
such that $H\Gamma^0_P(H^*X,H^*X)=H\Gamma^1_P(H^*X,H^*X)=0$. Then there is an injection
\[
Aut_{\mathbb{K}}(H^*X)/Aut_P(H^*X)\hookrightarrow \pi_0 \overline{Real_{P_{\infty}}}(H^*X)
\]
and an isomorphism
\[
\pi_1( Real_{P_{\infty}}(H^*X),\phi)\cong \{id_{(H^*X,\phi)}\},
\]
where $\{id_{(H^*X,\phi)}\}$ is the trivial group with a unique element, the homotopy class of the identity map of $(H^*X,\phi)$(the realizations of the identity are all homotopic according to \cite{Hof}).
For $n\geq 2$ we have
\begin{eqnarray*}
\pi_n( Real_{P_{\infty}}(H^*X),\phi) & \cong & \pi_n(\mathcal{N}\mathcal{R}^{P_{\infty}},(H^*X,\phi)) \\
 & \cong & \pi_{n-1}(haut_{P_{\infty}}(H_{\infty}),id) \\
 & \cong & H\Gamma_P^{-n}(H^*X,H^*X).
\end{eqnarray*}
where $H_{\infty}$ is a cofibrant resolution of $H^*X$ in $P_{\infty}$-algebras.
\end{thm}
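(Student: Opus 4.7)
The plan is to derive the theorem by combining the prop-level computations of Corollary~2.15, Theorem~2.14 and Lemma~2.17 with Hoffbeck's obstruction theory recalled just before the statement. Since $H^*X$ has trivial differential, the results of Section~2 apply directly to the realization problem on $H^*X$ itself, and the decomposition
\[
\overline{Real_{P_{\infty}}}(H^*X) = \coprod_{[\phi]\in\pi_0\mathcal{N}\mathcal{R}^{P_{\infty}}}P_{\infty}\{H^*X\}_{[\phi]}
\]
from Section~2.2.2 is what I feed into. The two vanishing hypotheses are then used to collapse the indexing set of the coproduct and to replace the homotopy automorphism group appearing in Lemma~2.17 with the strict $P$-algebra automorphism group.

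For the first injection, I first invoke the assumption $H\Gamma^1_P(H^*X,H^*X)=0$: by item~(2) of Hoffbeck's theorem recalled above, $\pi_0\mathcal{N}\mathcal{R}^{P_{\infty}}(H^*X)$ is a singleton, so the coproduct collapses to the single local realization space $P_{\infty}\{H^*X\}_{[\phi]}$ at the trivial structure $\phi:P_{\infty}\to P\to End_{H^*X}$. The assumption $H\Gamma^0_P(H^*X,H^*X)=0$ together with item~(1) makes the comparison map $\overline{H^*}:\pi_0 haut_{P_{\infty}}(H_\infty) \to Aut_P(H^*X)$ injective; combined with the surjectivity from item~(2) it becomes an isomorphism. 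Lemma~2.17 applied at $\phi$ then yields the injection $Aut_{\mathbb{K}}(H^*X)/Aut_P(H^*X)\hookrightarrow \pi_0 \overline{Real_{P_{\infty}}}(H^*X)$.

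For the homotopy group computations I would feed the identifications of Theorem~2.14 and Corollary~2.15 into the operadic setting. The isomorphism $\pi_1(Real_{P_{\infty}}(H^*X),\phi)\cong H^0Der_{\phi}(P_{\infty},End_{H^*X})$ gives the first claim once the right-hand side is identified with $H\Gamma^0_P(H^*X,H^*X)$, which vanishes by hypothesis, producing the trivial group $\{id_{(H^*X,\phi)}\}$. For $n\geq 2$, the isomorphism $\pi_n(Real_{P_\infty}(H^*X),\phi) \cong \pi_n(\mathcal{N}\mathcal{R}^{P_{\infty}},(H^*X,\phi))$ is already part of Corollary~2.15; the replacement of $L^H wCh_{\mathbb{K}}^{P_{\infty}}((H^*X,\phi),(H^*X,\phi))$ by the genuine simplicial monoid $haut_{P_{\infty}}(H_{\infty})$ at a cofibrant $P_\infty$-resolution $H_\infty$ of $H^*X$ is the identification of the hammock localization with the standard simplicial monoid of self weak equivalences, which is available here because Theorem~3.1 guarantees that $Ch_{\mathbb{K}}^{P_{\infty}}$ is a genuine model category in the operadic setting.

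The main obstacle I anticipate is the final isomorphism $H^{1-n}Der_{\phi}(P_{\infty},End_{H^*X}) \cong H\Gamma^{-n}_P(H^*X,H^*X)$. Both sides are cohomologies of explicit cochain complexes computing derivations of an algebraic structure over a cofibrant resolution, so a functorial comparison is expected; however, matching the degree conventions so that the shift recorded in the statement comes out correctly requires unpacking the deformation complex from \cite{Mar2} (built from the prop-level resolution) and comparing it with Hoffbeck's $\Gamma$-cohomology complex from \cite{Hof} (built from an operadic resolution), rather than treating the comparison as a black box. Once this matching is verified, substituting the $\Gamma$-cohomology identification into Corollary~2.15 completes the computation of $\pi_n$ for $n\geq 2$.
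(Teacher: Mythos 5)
Your overall strategy is the one the paper intends: Theorem 3.3 is given no separate proof in the text and is meant to be assembled exactly as you do, from the decomposition of $\overline{Real_{P_{\infty}}}(H^*X)$ into local realization spaces, Theorem 2.14/Corollary 2.15, Lemma 2.17, and the two items of Hoffbeck's obstruction theory recalled just before the statement (collapse of $\pi_0\mathcal{N}\mathcal{R}^{P_{\infty}}$ from $H\Gamma^1=0$, bijectivity of $\overline{H^*}$ from both vanishing hypotheses). Your treatment of the injection and of the chain $\pi_n(Real_{P_{\infty}}(H^*X),\phi)\cong\pi_n(\mathcal{N}\mathcal{R}^{P_{\infty}})\cong\pi_{n-1}(haut_{P_{\infty}}(H_{\infty}))$ for $n\geq 2$ matches the paper.

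Two points diverge. First, your argument for $\pi_1$ rests on an identification $H^0Der_{\phi}(P_{\infty},End_{H^*X})\cong H\Gamma^0_P(H^*X,H^*X)$ that you do not justify and that is inconsistent with the degree shift you invoke for $n\geq 2$: the pattern $H^{1-n}Der_{\phi}\cong H\Gamma^{-n}_P$ would put $H^0Der_{\phi}$ in correspondence with $H\Gamma^{-1}_P$, which is not assumed to vanish. The intended mechanism is the other identification in Theorem 2.14, namely $\pi_1(P_{\infty}\{H^*X\}_{[\phi]},\phi)\cong\ker(\pi_1(\pi))\cong\pi_0Real_{P_{\infty}}(id_{(H^*X,\phi)})$, which is a point because $H\Gamma^0_P=0$ makes realizations of the identity unique up to homotopy (equivalently, injectivity of $\overline{H^*}$ forces $\ker(\pi_1(\pi))$ to be trivial). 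You already have all the ingredients for this; just route the $\pi_1$ computation through $\ker(\pi_1(\pi))$ rather than through the deformation complex. Second, the ``main obstacle'' you anticipate --- a direct comparison of the prop-level deformation complex with Hoffbeck's $\Gamma$-complex --- is not actually needed: the last isomorphism $\pi_n(Real_{P_{\infty}}(H^*X),\phi)\cong H\Gamma^{-n}_P(H^*X,H^*X)$ is obtained by composing the already-established isomorphism with $\pi_{n-1}(haut_{P_{\infty}}(H_{\infty}),id)$ with Hoffbeck's computation of the homotopy groups of operadic mapping spaces in terms of $\Gamma$-cohomology, so no matching of the two cochain complexes and their degree conventions is required.
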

Note that the homotopy groups of $P_{\infty}\{H^*X\}$, hence of $P_{\infty}\{X\}$ by Proposition 2.12,
are the same for $n\geq 1$.

To conclude this section, we would like to emphasize a nice interpretation of the realization space $Real_{P_{\infty}}(H^*X)$.
According to \cite[Theorem 5.2.1]{Fre0}, left homotopies between two morphisms $\phi_0,\phi_1:P_{\infty}\rightarrow End_X$ corresponds to
$\infty$-isotopies between the two $P_{\infty}$-algebras $(X,\phi_0)$ and $(X,\phi_1)$, that is, $\infty$-morphisms of $P_{\infty}$-algebras
which reduce to the identity on $X$. Since the source of the morphisms $\phi_0$ and $\phi_1$ is cofibrant and their target is fibrant, left homotopies between
such maps are in bijection with right homotopies. The right homotopies are, in turn, the $1$-simplices of our realization space $Real_{P_{\infty}}(H^*X)$.
We conclude:
\begin{prop}
The Kan complex $Real_{P_{\infty}}(H^*X)$ parameterizes $P_{\infty}$-algebras with underlying complex $X$ realizing the $P$-algebra structure of $H^*X$
and their $\infty$-isotopies.
\end{prop}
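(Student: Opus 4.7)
The proposition's two assertions split cleanly, and the proof sketch in the paragraph immediately preceding the statement essentially spells out the argument. The plan is to verify the two claims separately: identify vertices, then identify 1-simplices (and higher simplices by the same principle).

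For the first assertion, I would simply unpack definitions. By the construction of Section 2.2 via Lemma 2.3, applied to the Kan complex $P_{\infty}\{X\}$, the vertices of $Real_{P_{\infty}}(H^*X)$ are exactly the prop morphisms $\phi:P_{\infty}\to End_X$ such that $H^*(\rho_X\circ\phi):P\cong H^*P_{\infty}\to End_{H^*X}$ is the prescribed $P$-algebra structure on $H^*X$. These are by definition the $P_{\infty}$-algebra structures on $X$ realizing the fixed $P$-algebra structure on $H^*X$, so the underlying set is correct.

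For the second assertion about $\infty$-isotopies, the plan is the following chain of identifications. A $1$-simplex of $Real_{P_{\infty}}(H^*X)$ is in particular a $1$-simplex of the mapping space $P_{\infty}\{X\}=\mathrm{Mor}_{\mathcal{P}_0}(P_{\infty}, End_X^{\Delta[-]})$, namely a map $P_{\infty}\to End_X^{\Delta[1]}$ realizing the given structure at the endpoints. Since $End_X^{\Delta[-]}$ is a simplicial resolution of $End_X$, such a $1$-simplex is a right homotopy between the two boundary maps $\phi_0,\phi_1:P_{\infty}\to End_X$. Now $P_{\infty}$ is cofibrant as a prop and $End_X$ is fibrant (every prop is fibrant since every cochain complex over a field is cofibrant-fibrant), so by standard model-categorical arguments right homotopies between maps $\phi_0,\phi_1:P_{\infty}\to End_X$ are in bijective correspondence with left homotopies between them. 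Finally, \cite[Theorem 5.2.1]{Fre0} identifies left homotopies $P_{\infty}\to End_X$ with $\infty$-isotopies between the corresponding $P_{\infty}$-algebras $(X,\phi_0)$ and $(X,\phi_1)$, that is, $\infty$-morphisms of $P_{\infty}$-algebras whose linear part is $\mathrm{id}_X$.

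The two ingredients I would need to state carefully are: (i) the cofibrancy of $P_{\infty}$ and fibrancy of $End_X$ ensuring the equivalence of right and left homotopies, which is standard and is invoked elsewhere in the paper (e.g.\ around Definition 1.5); and (ii) the fact that the $1$-simplex structure of the moduli space is the one induced by either the cosimplicial or the simplicial resolution, the equivalence of which was already noted in Definition 1.5. Neither step poses a genuine obstacle; the only subtle point is the bookkeeping that the homotopy constructed in $End_X^{\Delta[1]}$ stays inside $Real_{P_{\infty}}(H^*X)$, but this is automatic because both endpoints realize the given $P$-algebra structure. The same argument applied to higher simplicial dimensions yields the analogous interpretation of higher simplices as coherent families of $\infty$-isotopies, which fills in the higher homotopical content of the statement.
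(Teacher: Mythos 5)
Your proposal is correct and follows essentially the same route as the paper's own argument: the paper's proof is precisely the paragraph preceding the statement, identifying $1$-simplices of the realization space with right homotopies, converting these to left homotopies via the cofibrancy of $P_{\infty}$ and fibrancy of $End_X$, and then invoking \cite[Theorem 5.2.1]{Fre0} to identify left homotopies with $\infty$-isotopies. The extra bookkeeping you supply (the identification of vertices via the construction of Lemma 2.3, and the remark that the endpoints staying in the realization space is automatic) is consistent with, and merely elaborates on, what the paper leaves implicit.
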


\section{Extending algebraic structures: relative realization spaces}

Let $O\rightarrow P$ be a morphism of props, and suppose that $H^*X$ is equipped with a $P$-algebra
structure $P\rightarrow End_{H^*X}$. It induces an $O$-algebra structure $O\rightarrow P\rightarrow End_{H^*X}$.
We fix a $O_{\infty}$-algebra structure on $X$ which realizes this $O$-algebra structure on $H^*$.
Then a natural question is to determine the realizations $P_{\infty}\rightarrow End_X$ of the $P$-algebra structure
of $H^*X$ which extend this $O_{\infty}$-algebra structure on $X$.
Interesting examples are realizations of the Poincaré duality on the cochains of a manifold which extend the $E_{\infty}$-algebra
structure defined by the higher cup products (which defines the Wu classes, representing the Steenrod
squares on the cohomology).

The existence of a map $O\rightarrow P$ implies the existence of a map $O_{\infty}\rightarrow P_{\infty}$.
When such a map forms a cofibration, \cite[Theorem A]{Fre2} holds in the relative case:
\begin{thm}
Let $\mathcal{C}$ be a cofibrantly generated symmetric monoidal model category satisfying the monoid limit
axioms (see section 6 of \cite{Fre2}). Let $i:O_{\infty}\rightarrowtail P_{\infty}$ be a cofibration of cofibrant props
in $\mathcal{C}$. If $(X,\varphi_O^X)\stackrel{\sim}{\rightarrow} (Y,\varphi_O^Y)$ is a weak
equivalence of $O_{\infty}$-algebras such that $X,Y\in\mathcal{C}^{cf}$ (fibrant-cofibrant objects of
$\mathcal{C}$) and $Y$ is equipped with a $P_{\infty}$-algebra structure $\varphi_P^Y$ satisfying
$\varphi_P^Y\circ i=\varphi_O^Y$, then there exists a zigzag of two opposite acyclic fibrations of
$P_{\infty}$-algebras
\[
(X,\varphi_P^X)\stackrel{\sim}{\twoheadleftarrow}\bullet\stackrel{\sim}{\twoheadrightarrow}(Y,\varphi_P^Y)
\]
such that $\varphi_P^X\circ i=\varphi_O^X$.
\end{thm}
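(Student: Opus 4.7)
The plan is to adapt the diagrammatic argument from the proof of Lemma 2.7 (itself a variant of the strategy of [Fre2, Theorem A]) to the relative setting, tracking the $O_\infty$-structures throughout. First, I factor the weak equivalence $f:(X,\varphi_O^X)\stackrel{\sim}{\to}(Y,\varphi_O^Y)$ at the level of underlying complexes in $\mathcal{C}$ as an acyclic cofibration followed by an acyclic fibration $X\stackrel{j}{\rightarrowtail}Z\stackrel{p}{\twoheadrightarrow}Y$. Since $X$ is fibrant, $j$ admits a retraction $s:Z\to X$ with $sj=\mathrm{id}_X$, and $s$ is both surjective and a weak equivalence, hence an acyclic fibration. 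I assemble these data into a commutative diagram $\mathcal{Y}$ containing $X$, $Z$, $Y$ and the arrows $j$, $s$, $p$, $f$, $\mathrm{id}_X$, and form its endomorphism prop $End_\mathcal{Y}$ as in [Fre2, Section 7]. The inductive pullback argument of [Fre2, Lemma 7.2], iterated exactly as in the proof of Lemma 2.7, shows that the projections $End_\mathcal{Y}\twoheadrightarrow End_f$ and $End_\mathcal{Y}\twoheadrightarrow End_Y$ are acyclic fibrations of props, while $End_\mathcal{Y}\to End_X$ and $End_\mathcal{Y}\to End_Z$ are weak equivalences.

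Next, I perform two successive lifts in the category of props. The hypothesis that $f$ is a morphism of $O_\infty$-algebras provides a prop map $\varphi_O^f:O_\infty\to End_f$; since $O_\infty$ is cofibrant and $End_\mathcal{Y}\twoheadrightarrow End_f$ is an acyclic fibration, $\varphi_O^f$ lifts to $\varphi_O^\mathcal{Y}:O_\infty\to End_\mathcal{Y}$, equipping the whole diagram with a coherent $O_\infty$-structure extending $\varphi_O^X$ and $\varphi_O^Y$. The assumed equality $\varphi_P^Y\circ i=\varphi_O^Y$ together with the commutativity of the projections makes the square
$$\xymatrix{O_\infty\ar[r]^-{\varphi_O^\mathcal{Y}}\ar@{>->}[d]_-i & End_\mathcal{Y}\ar@{->>}[d]^-\sim \\ P_\infty\ar@{-->}[ur]^-{\varphi_P^\mathcal{Y}}\ar[r]_-{\varphi_P^Y} & End_Y}$$
commute. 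Applying the lifting property of the cofibration $i$ between cofibrant props against this acyclic fibration of props produces the desired $\varphi_P^\mathcal{Y}:P_\infty\to End_\mathcal{Y}$ satisfying $\varphi_P^\mathcal{Y}\circ i=\varphi_O^\mathcal{Y}$ and projecting to $\varphi_P^Y$ on $Y$.

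Finally, projecting $\varphi_P^\mathcal{Y}$ onto the endomorphism props of the subdiagrams yields $P_\infty$-structures $\varphi_P^X:=d_X\circ\varphi_P^\mathcal{Y}$ on $X$ and $\varphi_P^Z:=d_Z\circ\varphi_P^\mathcal{Y}$ on $Z$, and makes $s$ and $p$ into morphisms of $P_\infty$-algebras (because both are encoded in the joint structure on $\mathcal{Y}$). The identity $\varphi_P^X\circ i=d_X\circ\varphi_P^\mathcal{Y}\circ i=d_X\circ\varphi_O^\mathcal{Y}=\varphi_O^X$ holds by construction, and $s$, $p$ are acyclic fibrations of underlying complexes by choice. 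This assembles into the required zigzag of acyclic fibrations of $P_\infty$-algebras
$$(X,\varphi_P^X)\stackrel{s}{\twoheadleftarrow}(Z,\varphi_P^Z)\stackrel{p}{\twoheadrightarrow}(Y,\varphi_P^Y).$$

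I expect the main technical obstacle to be the verification that $End_\mathcal{Y}\twoheadrightarrow End_f$ and $End_\mathcal{Y}\twoheadrightarrow End_Y$ are genuine acyclic fibrations of props for the diagram $\mathcal{Y}$ built above. This reduces to expressing $End_\mathcal{Y}$ as an iterated pullback of the endomorphism props attached to the individual arrows of $\mathcal{Y}$, and showing inductively that the relevant projections preserve the acyclic fibration property. The argument relies on the basic observation of [Fre2, Lemma 7.2] that when a map in $\mathcal{C}$ is an acyclic fibration (resp.\ acyclic cofibration) the projection of its endomorphism prop onto the target (resp.\ source) endomorphism prop is an acyclic fibration of $\Sigma$-biobjects, combined with the monoid limit axioms of $\mathcal{C}$ which guarantee that pullbacks of such componentwise acyclic fibrations remain componentwise acyclic fibrations in the prop category.
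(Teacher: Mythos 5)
Your overall strategy---re-running the endomorphism-prop construction of \cite{Fre2} (as in Lemma 2.7) while replacing the absolute lifting of the cofibrant prop $P_{\infty}$ by the relative lifting of the cofibration $i:O_{\infty}\rightarrowtail P_{\infty}$---is in spirit what the theorem requires, and it is the hands-on unravelling of what the paper actually does. The paper's own proof is purely formal and much shorter: it transposes everything to the comma category $(\mathcal{P}\downarrow O_{\infty})$, observes that $P_{\infty}$ is cofibrant there precisely because $i$ is a cofibration, that $End_X$ and $End_Y$ become objects of this comma category via $\varphi_O^X$ and $\varphi_O^Y$, that $\varphi_P^Y$ is a morphism there by the hypothesis $\varphi_P^Y\circ i=\varphi_O^Y$, and then invokes the proof of \cite[Theorem A]{Fre2} verbatim in this semi-model category. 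Your relative lifting square is exactly ``lift the cofibrant object $P_{\infty}$ of $(\mathcal{P}\downarrow O_{\infty})$ against an acyclic fibration'', so the two routes would coincide if your intermediate claims held.

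They do not all hold, and the one that fails is the load-bearing one. You assert that the projection $End_{\mathcal{Y}}\rightarrow End_Y$ is an acyclic fibration and perform the decisive lift of $i$ against it. But every element $(u,v,w)$ of $End_{\mathcal{Y}}(m,n)$ satisfies, as a consequence of the $j$- and $p$-compatibilities and $f=pj$, the relation $f^{\otimes n}\circ u=w\circ f^{\otimes m}$, which forces $w$ to carry $\mathrm{im}(f^{\otimes m})$ into $\mathrm{im}(f^{\otimes n})$. When $f$ is a non-surjective weak equivalence (e.g.\ the inclusion of a deformation retract; or take $m=0$, where the condition reads $w\in\mathrm{im}(f^{\otimes n})$), a generic $w\in Hom(Y^{\otimes m},Y^{\otimes n})$ violates this, so the projection is not even componentwise surjective and cannot be an acyclic fibration of props. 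Composing the genuine acyclic fibration $End_{\mathcal{Y}}\rightarrow End_f$ of \cite[Lemma 8.3]{Fre2} with $d_1:End_f\rightarrow End_Y$ does not help, since $d_1$ is an acyclic fibration only when $f$ itself is one. A secondary point: in a general $\mathcal{C}$ the retraction $s$ of the acyclic cofibration $j$ is a weak equivalence by two-out-of-three, but ``surjective weak equivalence implies acyclic fibration'' is special to $Ch_{\mathbb{K}}$, so your claim that $s$ is an acyclic fibration also needs justification at the stated level of generality. To repair the argument, either pass to $(\mathcal{P}\downarrow O_{\infty})$ as the paper does, or make sure the relative lift of $i$ is only ever performed against maps that really are acyclic fibrations---for instance against $d_1:End_p\rightarrow End_Y$ to first extend $\varphi_P^Y$ over $Z$ compatibly with the lifted $O_{\infty}$-structure---and then use the full strength of Fresse's Lemma 8.3, interpreted in the comma category, to descend from $Z$ to $X$; a single lift against $End_{\mathcal{Y}}\rightarrow End_Y$ cannot do the job.
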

\begin{proof}
We just have to verify that the proof of \cite[Theorem A]{Fre2} can be transposed in the comma category $(\mathcal{P} \downarrow O_{\infty})$,
where $\mathcal{P}$ is the category of props equipped with the semi-model category structure defined in \cite{Fre2}.
Comma categories inherit the cofibrantly generated model category structure (see \cite{Hir2}), and this result
extends readily to the case of semi-model categories. Hence $(\mathcal{P} \downarrow O_{\infty})$ forms a cofibrantly generated
semi-model category. Now, let $P_{\infty}\in (\mathcal{P} \downarrow O_{\infty})$ such that $i:O_{\infty}\rightarrowtail P_{\infty}$
is a cofibration, then by definition of cofibrations and initial morphisms in comma categories,
the prop $P_{\infty}$ is cofibrant in $(\mathcal{P} \downarrow O_{\infty})$.
The existence of $\varphi_O^X$ and $\varphi_O^Y$ ensures that $End_X,End_Y\in (\mathcal{P} \downarrow O_{\infty})$,
and the condition $\varphi_P^Y\circ i=\varphi_O^Y$ ensures that $\varphi_P^Y$ is a morphism of $(\mathcal{P} \downarrow O_{\infty})$.
Now all the necessary conditions are satisfied and one applies exactly the same arguments as in \cite{Fre2}.
\end{proof}

Now let $i:O_{\infty}\rightarrow P_{\infty}$ be any fixed prop morphism, we do not suppose that $i$ is a cofibration
anymore. Let $X$ be a cochain complex, and suppose that $X$ possesses a $O_{\infty}$-algebra structure given by a morphism $\psi:O_{\infty}\rightarrow End_X$. Recall that the extensions of this $O_{\infty}$-algebra structure into $P_{\infty}$-algebra structures are parametrized by the homotopy fiber
\[
P_{\infty}\{X,\psi\} \rightarrow P_{\infty}\{X\}\stackrel{i^*}{\rightarrow}O_{\infty}\{X\},
\]
where $i^*$ is the map induced by precomposing with $i$ and the fiber is taken over the base point $\psi$.
The map $i$ admits a factorization $O_{\infty}\stackrel{\tilde{i}}{\rightarrowtail}
\tilde{P_{\infty}}\stackrel{\sim}{\twoheadrightarrow_p}P_{\infty}$ into a cofibration $\tilde{i}$
followed by an acyclic fibration $p$. We know that $i$ induces a map of simplicial sets
$i^*:P_{\infty}\{X\}\rightarrow O_{\infty}\{X\}$ which factors through
$P_{\infty}\{X\}\stackrel{\sim}{\rightarrow_{p^*}}\tilde{P_{\infty}}\{X\}\stackrel{\tilde{i}^*}{\twoheadrightarrow}
O_{\infty}\{X\}$, and that there is a homotopy equivalence
\[
P_{\infty}\{X,\psi\} = hofib(i^*)\sim fib(\tilde{i}^*)
\]
so that the homotopy type of the relative moduli space $P_{\infty}\{X,\psi\}$ is given by the strict fiber of the Kan fibration $\tilde{i}^*$. Let us also note that, according to \cite{Yal1}, the map $p$ induces a weak equivalence
\[
\mathcal{N}p^*:\mathcal{N}wCh_{\mathbb{K}}^{P_{\infty}}\rightarrow \mathcal{N}wCh_{\mathbb{K}}^{\tilde{P_{\infty}}}.
\]

The main purpose of this section is to prove the following results, which are adaptations to the relative case
of the results obtained in the previous sections:
\begin{thm}
Let $X$ be a dg $O_{\infty}$-algebra.
Then the commutative square
\[
\xymatrix{P_{\infty}\{X,\psi\}\ar[d]\ar[r] & \mathcal{N}wCh_{\mathbb{K}}^{P_{\infty}}\ar[d]^{\mathcal{N}i^*}\\
\{X\}\ar[r] & \mathcal{N}wCh_{\mathbb{K}}^{O_{\infty}}
}
\]
is a homotopy pullback of simplicial sets.
\end{thm}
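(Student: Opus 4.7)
The strategy is to deduce the theorem from \cite[Theorem 0.1]{Yal2} (recalled at the end of Section 1.3) applied to both $P_{\infty}$ and $O_{\infty}$, via two successive applications of the pasting lemma for homotopy pullbacks of simplicial sets.

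First, the constructions entering the homotopy pullback square of Section 1.3 are contravariantly functorial in the prop: the prop morphism $i:O_{\infty}\rightarrow P_{\infty}$ induces vertical maps $i^*$ on moduli spaces and $\mathcal{N}i^*$ on classification spaces, assembling the two pullback squares into the commutative rectangle
\[
\xymatrix{
P_{\infty}\{X\} \ar[r] \ar[d]_{i^*} & \mathcal{N}wCh_{\mathbb{K}}^{P_{\infty}} \ar[d]^{\mathcal{N}i^*} \\
O_{\infty}\{X\} \ar[r] \ar[d] & \mathcal{N}wCh_{\mathbb{K}}^{O_{\infty}} \ar[d] \\
\{X\} \ar[r] & \mathcal{N}wCh_{\mathbb{K}}
}
\]
whose lower square is a homotopy pullback (by \cite{Yal2} applied to $O_{\infty}$) and whose outer rectangle is a homotopy pullback (by \cite{Yal2} applied to $P_{\infty}$). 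The pasting lemma then forces the upper square to be a homotopy pullback as well.

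Second, by the very definition of the relative moduli space, $P_{\infty}\{X,\psi\}$ is the homotopy fiber of $i^*$ at the basepoint $\psi$, which is to say that the square
\[
\xymatrix{
P_{\infty}\{X,\psi\} \ar[r] \ar[d] & P_{\infty}\{X\} \ar[d]^{i^*} \\
\{(X,\psi)\} \ar[r]^-{\psi} & O_{\infty}\{X\}
}
\]
is a homotopy pullback. Pasting this square horizontally with the upper homotopy pullback obtained in the first step produces exactly the square of the statement, where the bottom-left vertex written $\{X\}$ has to be understood as the point $(X,\psi)$ in $\mathcal{N}wCh_{\mathbb{K}}^{O_{\infty}}$, since the composite bottom map $\{(X,\psi)\}\rightarrow O_{\infty}\{X\}\rightarrow\mathcal{N}wCh_{\mathbb{K}}^{O_{\infty}}$ is precisely the inclusion of this vertex.

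The only subtlety that has to be spelled out is the naturality used in the first step, namely that the bisimplicial diagram of Section 1.3 from which \cite{Yal2} extracts its homotopy pullback is contravariantly functorial in the prop. This amounts to checking that precomposition by $i$ induces simplicial maps between the bisimplicial objects $\mathcal{N}w(Ch_{\mathbb{K}}^{cf})^{\Delta[-]\otimes P_{\infty}}$ and $\mathcal{N}w(Ch_{\mathbb{K}}^{cf})^{\Delta[-]\otimes O_{\infty}}$ compatible with the forgetful maps to $\mathcal{N}w(Ch_{\mathbb{K}}^{cf})$ and with the moduli spaces; this is formal once one unpacks the definition, after which no new homotopical input is required beyond the pasting lemma and \cite{Yal2}.
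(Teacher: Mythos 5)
Your argument is correct, but it takes a genuinely different route from the paper's. The paper reproves the relative statement from scratch: it assembles the bisimplicial diagram comparing $diag\mathcal{N}fwCh_{\mathbb{K}}^{P_{\infty}\otimes\Delta^{\bullet}}$ with $diag\mathcal{N}fwCh_{\mathbb{K}}^{O_{\infty}\otimes\Delta^{\bullet}}$, invokes Rezk's variant of Quillen's Theorem B for the simplicial functor $\pi:fwCh_{\mathbb{K}}^{P_{\infty}\otimes\Delta^{\bullet}}\rightarrow cs_{\bullet}fwCh_{\mathbb{K}}^{O_{\infty}}$, and carries out a levelwise identification of $\mathcal{N}(\pi\downarrow X)_{s,\bullet}$ with a coproduct of relative moduli spaces $P_{\infty}\{Y_s\twoheadrightarrow\cdots\twoheadrightarrow Y_0,\psi_{\{Y_i\}}\}$ indexed by chains of acyclic fibrations mapping to $X$, so as to verify the hypothesis of Theorem B. You instead obtain the square purely formally: two applications of the absolute homotopy pullback of \cite{Yal2} (for $O_{\infty}$, and for $P_{\infty}$ read as the outer rectangle of your stacked diagram), vertical cancellation to isolate the middle square, and a horizontal pasting against the defining homotopy fiber square of $P_{\infty}\{X,\psi\}$. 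This is shorter and requires no input beyond the pasting and cancellation lemmas for homotopy pullbacks in the right proper category of simplicial sets. The one point you rightly single out --- that the zigzag of weak equivalences through the bisimplicial models of Section 1.3 is natural under restriction along $i$, so that your rectangle can be realized as a strictly commutative diagram before pasting --- does hold, since $i$ induces levelwise restriction functors commuting with the forgetful functors and with the diagonal and nerve constructions; this is precisely the naturality that the paper's explicit diagram displays. What the paper's longer route buys is the concrete levelwise description of the fibers over chains of acyclic fibrations, which is reused in spirit in the proof of Theorem 4.3; for the bare statement of Theorem 4.2 your argument suffices.
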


\begin{thm}
Let us suppose that $X$ is a graded $O_{\infty}$-algebra with a trivial differential (e.g $X$ is the cohomology of some cochain complex). Let $\phi$ be a $P_{\infty}$-algebra structure on $X$ extending its $O_{\infty}$-algebra structure.
There exists a commutative square
\[
\xymatrix{
WL^HwCh_{\mathbb{K}}^{O_{\infty}}(X,X)\times_{L^HwCh_{\mathbb{K}}^{O_{\infty}}(X,X)} P_{\infty}\{X,\psi\}_{[\phi]}^f \ar[r]^-{\sim} \ar@{->>}[d]_{\pi}
& diag\mathcal{N} fwCh_{\mathbb{K}}^{P\otimes\Delta^{\bullet}}|_X\ar[d]^{diag\mathcal{N}(i\otimes\Delta^{\bullet})^*}\\
\overline{W}L^HwCh_{\mathbb{K}}^{O_{\infty}}(X,X) \ar[r]^-{\sim} & \mathcal{N}wCh_{\mathbb{K}}^{O_{\infty}}|_X
}
\]
where $\pi$ is a Kan fibration obtained by the simplicial Borel construction
and the horizontal maps are weak equivalences of simplicial sets.
\end{thm}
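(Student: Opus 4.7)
The plan is to mimic the proof of Theorem 2.8, replacing the discrete group $Aut_{\mathbb{K}}(X)$ acting by conjugation with the simplicial monoid $L^HwCh_{\mathbb{K}}^{O_{\infty}}(X,X)$ of homotopy automorphisms of $X$ as an $O_{\infty}$-algebra, acting by relative transfer of structure. The main input we gain in the relative setting is the extension principle provided by Theorem 4.1, which plays the role that prop isomorphisms played in the absolute case: it allows us to propagate $P_{\infty}$-structures extending $\psi$ along zigzags of $O_{\infty}$-weak equivalences.

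First, I would construct a simplicial action of $L^HwCh_{\mathbb{K}}^{O_{\infty}}(X,X)$ on $P_{\infty}\{X,\psi\}_{[\phi]}^f$ as follows. Given a hammock of $O_{\infty}$-equivalences representing an element in simplicial degree $k$ and a $k$-simplex $\varphi \in P_{\infty}\{X,\psi\}_{[\phi]}^f$, one applies Theorem 4.1 componentwise along the hammock (lifting $\varphi$ through the acyclic-fibration/trivial-cofibration factorizations provided by the relative transfer argument and the functoriality of the endomorphism prop of a diagram from Lemma 2.5) to obtain a new $P_{\infty} \otimes \Delta^k$-structure on $X$ extending $\psi$. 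Since one works inside $P_{\infty}\{X,\psi\}_{[\phi]}^f$, whose vertices are connected to $\phi$ by zigzags of acyclic fibrations of $P_{\infty}$-algebras, the transferred structures remain in the same local realization component. Compatibility with faces, degeneracies, and with the monoidal structure of hammocks is checked degree by degree, exactly in the inductive spirit of Lemma 2.9.

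Second, I would apply the simplicial Borel construction to this action. Because $P_{\infty}\{X,\psi\}_{[\phi]}^f$ is a Kan complex (being a Kan subcomplex of the homotopy fiber $P_{\infty}\{X,\psi\}$), the projection
\[
\pi: WL^HwCh_{\mathbb{K}}^{O_{\infty}}(X,X)\times_{L^HwCh_{\mathbb{K}}^{O_{\infty}}(X,X)} P_{\infty}\{X,\psi\}_{[\phi]}^f \twoheadrightarrow \overline{W}L^HwCh_{\mathbb{K}}^{O_{\infty}}(X,X)
\]
is a Kan fibration with typical fiber $P_{\infty}\{X,\psi\}_{[\phi]}^f$, by the results of May quoted in Section 2.1. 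The top horizontal map is defined by sending a pair (hammock, structure $\varphi$) to the diagram of $P_{\infty}$-algebras obtained by transferring $\varphi$ along the hammock; the bottom horizontal map is the canonical inclusion/identification. The bottom map is a weak equivalence by Dwyer--Kan's identification of the connected components of the classification space with the classifying complex of homotopy automorphisms \cite{DK3}.

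Third, to see that the top horizontal map is a weak equivalence, I would compare the two Kan fibrations over the bottom equivalence. Both yield long exact sequences of homotopy groups with the same fiber $P_{\infty}\{X,\psi\}_{[\phi]}^f$ (on the left tautologically, on the right by restricting the homotopy pullback of Theorem 4.2(first) to the local realization component, as was done after Lemma 2.7 in the absolute case) and with weakly equivalent bases. The five lemma then forces the middle map to induce isomorphisms on all homotopy groups at every basepoint. The main obstacle is the first step: unlike the conjugation action of a discrete automorphism group, the action by hammock zigzags is a priori only coherent up to homotopy, so one must genuinely exploit the $f$-decoration (vertices realized by zigzags of acyclic fibrations) and the rigidity provided by Theorem 4.1 to produce a strictly simplicial action. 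A safe fallback, if strict functoriality proves elusive, is to replace $L^HwCh_{\mathbb{K}}^{O_{\infty}}(X,X)$ up to weak equivalence by the strict simplicial monoid of self-acyclic-fibrations of a cofibrant-fibrant resolution of $(X,\psi)$, where the transfer procedure of Theorem 4.1 becomes a genuine left lifting against cofibrations of props, giving a strict action whose Borel construction has the desired homotopy type.
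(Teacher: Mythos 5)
Your overall architecture (simplicial Borel construction, identification of the typical fiber, Dwyer--Kan for the bottom arrow, five lemma for the top arrow) matches the paper's, but there is a genuine gap at the central technical step: the construction of a \emph{strictly} simplicial action of $L^HwCh_{\mathbb{K}}^{O_{\infty}}(X,X)$ on $P_{\infty}\{X,\psi\}_{[\phi]}^f$. You propose to define the action by transferring the $P_{\infty}\otimes\Delta^k$-structure along the hammock via Theorem 4.1, and you yourself observe that this is a priori only coherent up to homotopy because the transfer involves choices of lifts. Your fallback (replacing the hammock localization by a monoid of self-acyclic-fibrations of a resolution) does not remove the choice of lift either, so the action would still fail to be strictly associative without substantial extra work, and you would then also have to re-establish the comparison with $\mathcal{N}wCh_{\mathbb{K}}^{O_{\infty}}|_X$ for that replacement. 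Since the twisted cartesian product underlying the Borel construction requires an actual simplicial monoid action, this is not a cosmetic issue: your first step does not produce the input the second step needs.

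The paper closes this gap by exploiting the hypothesis you left unused, namely that $X$ has trivial differential. Under that hypothesis each row $X\stackrel{f^1}{\leftarrow}\bullet\cdots\bullet\stackrel{f^n}{\rightarrow}X$ of a hammock is sent by the map $\overline{H^*}$ (defined at the end of Section 2.3) to an honest automorphism $\overline{H^*}(f^1,\dots,f^n)$ of $X$, the composite of the cohomology isomorphisms with the left-pointing ones inverted. The action is then defined by conjugation by this automorphism exactly as in Lemma 2.10, which is visibly a strict monoid action on the nose; compatibility with the faces and degeneracies of the hammock direction follows because commutativity of the hammock diagram forces all rows to have the same image under $\overline{H^*}$. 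The restriction of the action to the local realization space is checked on vertices as in Lemma 2.10, and the remainder of the argument (the comparison square, Dwyer--Kan for the base, five lemma for the total space) then proceeds as you describe, with the only modification being that one applies $\overline{H^*}$ before transferring the structure along the resulting chain of isomorphisms. So the missing idea is precisely the use of $\overline{H^*}$ to rigidify the hammock action into a conjugation action; Theorem 4.1 is not the relevant tool here.
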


\begin{cor}
There is an injection
\[
Aut_{Ho(Ch_{\mathbb{K}}^{O_{\infty}})}(X,\psi)/
Aut_{Ho(Ch_{\mathbb{K}}^{P_{\infty}})}(X,\phi)\hookrightarrow\pi_0P_{\infty}\{X,\psi\}_{[\phi]}
\]
\end{cor}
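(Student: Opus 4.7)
The plan is to deduce this corollary as the relative analogue of Theorem 0.1(1), adapting the argument used to derive Theorem 2.14 from Theorem 2.8 in the absolute setting. By Theorem 4.3 the map $\pi$ is a Kan fibration obtained from a simplicial Borel construction for an action of the simplicial monoid $G=L^HwCh_{\mathbb{K}}^{O_{\infty}}(X,X)$ on $P_{\infty}\{X,\psi\}_{[\phi]}^f$, with typical fiber $P_{\infty}\{X,\psi\}_{[\phi]}^f$ and base $\overline{W}G$. I would first write down the long exact sequence of this Kan fibration at the basepoint $(id,\phi)$:
\[
\pi_1(WG\times_G P_{\infty}\{X,\psi\}_{[\phi]}^f,(id,\phi))\stackrel{\pi_1(\pi)}{\rightarrow}\pi_1(\overline{W}G,id)\stackrel{\partial_1}{\rightarrow}\pi_0 P_{\infty}\{X,\psi\}_{[\phi]}^f,
\]
and then invoke the standard factorization of the boundary map recalled in Section 2.1, producing an injection $\pi_1(\overline{W}G,id)/Im(\pi_1(\pi))\hookrightarrow\pi_0P_{\infty}\{X,\psi\}_{[\phi]}^f$. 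By Lemma 2.11 (whose proof transposes verbatim to the relative situation) one has $P_{\infty}\{X,\psi\}_{[\phi]}^f=P_{\infty}\{X,\psi\}_{[\phi]}$, so it suffices to match the source of this injection with the quotient appearing in the statement.

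Next, I would carry out the identification of the two terms in that quotient. The base term identifies by the classical delooping relation $\pi_1(\overline{W}G,id)\cong\pi_0 G=Aut_{Ho(Ch_{\mathbb{K}}^{O_{\infty}})}(X,\psi)$. For the image of $\pi_1(\pi)$, I would transport $\pi_1$ of the Borel total space across the top horizontal equivalence of Theorem 4.3 to $\pi_1(diag\mathcal{N}fwCh_{\mathbb{K}}^{P\otimes\Delta^{\bullet}}|_X,(X,\phi))$, which coincides with $\pi_1(\mathcal{N}wCh_{\mathbb{K}}^{P_{\infty}}|_{(X,\phi)},(X,\phi))$ by the diagonal-versus-nerve identification used in the proof of Theorem 2.14; the Dwyer--Kan theorem then gives the isomorphism with $\pi_0L^HwCh_{\mathbb{K}}^{P_{\infty}}((X,\phi),(X,\phi))=Aut_{Ho(Ch_{\mathbb{K}}^{P_{\infty}})}(X,\phi)$. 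Since the right-hand vertical arrow of Theorem 4.3 is $diag\mathcal{N}(i\otimes\Delta^{\bullet})^*$, i.e.\ restriction of structure along $i:O_{\infty}\to P_{\infty}$, under these identifications $\pi_1(\pi)$ becomes the forgetful map $Aut_{Ho(Ch_{\mathbb{K}}^{P_{\infty}})}(X,\phi)\to Aut_{Ho(Ch_{\mathbb{K}}^{O_{\infty}})}(X,\psi)$; its image is the subgroup by which one quotients in the statement, and the desired injection follows.

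The step I expect to be the main obstacle is the identification of $\pi_1(\pi)$ with this restriction map. It requires checking that the simplicial action of $G$ on $P_{\infty}\{X,\psi\}_{[\phi]}^f$ produced in the proof of Theorem 4.3 is compatible, through the horizontal weak equivalences of that theorem, with the natural action of $O_{\infty}$-homotopy automorphisms of $(X,\psi)$ on the homotopy fibers of $\mathcal{N}i^*$ obtained by transfer of $P_{\infty}$-structure. Once this compatibility is in hand, the remainder of the argument reduces to a purely formal exact-sequence chase mirroring the absolute case of Theorem 2.14.
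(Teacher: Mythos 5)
Your proposal is correct and follows essentially the same route as the paper: the long exact sequence of the Kan fibration $\pi$ from Theorem 4.3, the factorization of $\partial_1$ recalled in Section 2.1, the delooping identification $\pi_1(\overline{W}G,id)\cong\pi_0 G$, and the identification of $\pi_1$ of the Borel total space with $Aut_{Ho(Ch_{\mathbb{K}}^{P_{\infty}})}(X,\phi)$ via the equivalence of fiber sequences. The compatibility check you flag as the main obstacle is precisely what the paper compresses into the phrase ``which follows from the equivalence of fiber sequences'' (together with the remark after the corollary interpreting $Aut_{Ho(Ch_{\mathbb{K}}^{P_{\infty}})}(X,\phi)$ as a subgroup of $Aut_{Ho(Ch_{\mathbb{K}}^{O_{\infty}})}(X,\psi)$), so your treatment is, if anything, slightly more explicit on that point.
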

Here
\[Aut_{Ho(Ch_{\mathbb{K}}^{P_{\infty}})}(X,\phi)
\]
is seen as the subgroup of automorphisms of
\[
Aut_{Ho(Ch_{\mathbb{K}}^{O_{\infty}})}(X,\psi)
\]
which realizes as automorphisms of $X$
in the category of $P_{\infty}$-algebras.
The higher homotopy groups are more difficult to compute in the relative case, since a priori the
$\pi_n\overline{W}L^HwCh_{\mathbb{K}}^{O_{\infty}}(X)$ are non zero for $n\geq 2$.
However, \cite[Corollary 2.18]{Yal4} provides a cohomological interpretation of the long
exact sequence.
\begin{proof}[Proof of Theorem 4.2]
We have a commutative diagram
\[
\xymatrix{holim\ar[r]\ar[d] & diag\mathcal{N} fwCh_{\mathbb{K}}^{P_{\infty}\otimes\Delta^{\bullet}}\ar[d]\ar[r]^{\sim}
& diag\mathcal{N} wCh_{\mathbb{K}}^{P_{\infty}\otimes\Delta^{\bullet}}\ar[d]
& \mathcal{N} wCh_{\mathbb{K}}^{P_{\infty}}\ar[l]_-{\sim}\ar[d]^{\mathcal{N}i^*}\\
pt\ar[r] & diag\mathcal{N}fwCh_{\mathbb{K}}^{O_{\infty}\otimes\Delta^{\bullet}}\ar[r]^{\sim} &
diag\mathcal{N}wCh_{\mathbb{K}}^{O_{\infty}\otimes\Delta^{\bullet}}  & \mathcal{N}wCh_{\mathbb{K}}^{O_{\infty}}
\ar[l]_-{\sim}}.
\]
We want to prove that $holim$ (the homotopy pullback of the left hand commutative square) has the homotopy type
of $P_{\infty}\{X,\psi\}_{[\phi]}$. For this, we will prove that $holim$ has the homotopy of $fib(\tilde{i^*})$, the fiber
of $\tilde{i^*}$. We follow arguments similar to those of Rezk in \cite{Rez}.

We recall the variant of Quillen's Theorem B used in \cite{Rez}. Let $M^{\bullet}$ be a simplicial category, $N$
a category and $\pi:M^{\bullet}\rightarrow cs_{\bullet}N$ a simplicial functor where $cs_{\bullet}N$
is the constant simplicial category over $N$. Then the commutative square
\[
\xymatrix{
diag\mathcal{N}(\pi\downarrow X)\ar[r]\ar[d] & diag\mathcal{N}M^{\bullet}\ar[d]^{diag\pi} \\
\mathcal{N}(N\downarrow X)\ar[r] & \mathcal{N}N }
\]
is a homotopy pullback if each $diag\mathcal{N}(\pi\downarrow X)\stackrel{\sim}{\rightarrow}
diag\mathcal{N}(\pi\downarrow X')$ induced by a morphism $X\rightarrow X'$ of $N$
is a weak equivalence.
Here we consider the functor
\[
\pi:fwCh_{\mathbb{K}}^{P_{\infty}\otimes\Delta^{\bullet}}\rightarrow cs_{\bullet}fwCh_{\mathbb{K}}^{P_{\infty}}
\stackrel{cs_{\bullet}i^*}{\rightarrow} cs_{\bullet}fwCh_{\mathbb{K}}^{O_{\infty}}
\]
induced by
\[
O_{\infty}\stackrel{i}P_{\infty}\cong P_{\infty}\otimes\Delta^0\hookrightarrow P_{\infty}\otimes\Delta^{\bullet}.
\]
Recall that we consider a cosimplicial frame $(-)\otimes\Delta^{\bullet}$ on $P_{\infty}$,
which is sufficient to get a cosimplicial resolution of $P_{\infty}$ since $P_{\infty}$
is cofibrant (see \cite[Chapter 16]{Hir}).
We note $\mathcal{N}(\pi\downarrow X)_{\bullet,\bullet}$ in order to distinguish between the two simplicial degrees,
the first $\bullet$ being the simplicial dimension of the nerve and the second $\bullet$ being
the simplicial dimension of the simplicial category $(\pi\downarrow X)$.
The key point is to establish, for every $s$, the homotopy equivalence
\[
\mathcal{N}(\pi\downarrow X)_{s,\bullet}\sim\coprod_{Y_s\stackrel{\sim}{\twoheadrightarrow}...\stackrel{\sim}{\twoheadrightarrow}
Y_0\rightarrow X\in fwCh_{\mathbb{K}}^{O_{\infty}}}P_{\infty}\{Y_s\stackrel{\sim}{\twoheadrightarrow}...\stackrel{\sim}{\twoheadrightarrow}
Y_0,\psi_{\{Y_i\}}\},
\]
where $\psi_{\{Y_i\}}$ is the $O_{\infty}$-algebra structure on the sequence of arrows $Y_s\stackrel{\sim}{\twoheadrightarrow}...\stackrel{\sim}{\twoheadrightarrow} Y_0$.
Once this homotopy equivalence holds, the remaining part of the proof is exactly the argument of the proof of \cite{Rez}.
We have
\begin{eqnarray*}
\mathcal{N}(\pi\downarrow X)_{s,\bullet} & = & \{\pi(Y_s)\stackrel{\sim}{\twoheadrightarrow_{\pi(f_s)}}...\stackrel{\sim}{\twoheadrightarrow}
\pi(Y_0)\rightarrow X \}\\
 & = &  \{ \pi(Y_s\stackrel{\sim}{\twoheadrightarrow}...\stackrel{\sim}{\twoheadrightarrow}Y_0)\rightarrow X
 \in fwCh_{\mathbb{K}}^{O_{\infty}},
 Y_s\stackrel{\sim}{\twoheadrightarrow}...\stackrel{\sim}{\twoheadrightarrow}Y_0
 \in (\mathcal{N}fwCh_{\mathbb{K}}^{P_{\infty}\otimes\Delta^{\bullet}})_{s,\bullet} \} \\
 & = & \coprod_{Y_s\stackrel{\sim}{\twoheadrightarrow}...\stackrel{\sim}{\twoheadrightarrow}
Y_0\rightarrow X\in fwCh_{\mathbb{K}}^{O_{\infty}}} fib(\tilde{P_{\infty}}\{Y_s\stackrel{\sim}{\twoheadrightarrow}...\stackrel{\sim}{\twoheadrightarrow}
Y_0 \}
\stackrel{\tilde{i}^*}{\twoheadrightarrow} O_{\infty}\{Y_s\stackrel{\sim}{\twoheadrightarrow}...\stackrel{\sim}{\twoheadrightarrow}
Y_0 \})\\
 & \sim & \coprod_{Y_s\stackrel{\sim}{\twoheadrightarrow}...\stackrel{\sim}{\twoheadrightarrow}
Y_0\rightarrow X\in fwCh_{\mathbb{K}}^{O_{\infty}}}P_{\infty}\{Y_s\stackrel{\sim}{\twoheadrightarrow}...\stackrel{\sim}{\twoheadrightarrow}
Y_0,\psi_{\{Y_i\}}\} \\
\end{eqnarray*}
where, in the third line, the basepoint of each $O_{\infty}\{Y_s\stackrel{\sim}{\twoheadrightarrow}...\stackrel{\sim}{\twoheadrightarrow}
Y_0\}$ is given by the $Y_s\stackrel{\sim}{\twoheadrightarrow}...\stackrel{\sim}{\twoheadrightarrow}
Y_0\in fwCh_{\mathbb{K}}^{P_{\infty}}$ indexing the coproduct.
\end{proof}

The homotopy fiber
\[
\xymatrix{P_{\infty}\{X,\psi\}\ar[r]\ar[d] & diag\mathcal{N} fwCh_{\mathbb{K}}^{P_{\infty}\otimes\Delta^{\bullet}}\ar[d]\\
\{X\}\ar[r] & diag\mathcal{N}fwCh_{\mathbb{K}}^{O_{\infty}\otimes\Delta^{\bullet}}
\sim  \mathcal{N}wCh_{\mathbb{K}}^{O_{\infty}}
}
\]
restricts to a homotopy fiber
\[
\xymatrix{P_{\infty}\{X,\psi\}_{[\phi]}\ar[r]\ar[d] & diag\mathcal{N} fwCh_{\mathbb{K}}^{P\otimes\Delta^{\bullet}}|_X\ar[d]\\
\{X,\psi\}\ar[r] &  \mathcal{N}wCh_{\mathbb{K}}^{O_{\infty}}|_{(X,\psi)}
},
\]
where $\mathcal{N}wCh_{\mathbb{K}}^{O_{\infty}}|_{(X,\psi)}$  is the connected component
of the $O_{\infty}$-algebra $(X,\psi)$, and
\[
diag\mathcal{N} fwCh_{\mathbb{K}}^{P\otimes\Delta^{\bullet}}|_X
=\coprod_{[X,\varphi]} diag\mathcal{N} fwCh_{\mathbb{K}}^{P\otimes\Delta^{\bullet}}|_{(X,\varphi)}
\]
is the union of the connected components $diag\mathcal{N} fwCh_{\mathbb{K}}^{P\otimes\Delta^{\bullet}}|_{(X,\varphi)}$
of the $(X,\varphi)$ ranging over the acyclic fibration classes of $P_{\infty}$-algebras
having $X$ as underlying complex, and such that there exists a zigzag
$(X,\varphi\circ i)\stackrel{\sim}{\leftarrow}\bullet\stackrel{\sim}{\rightarrow}(X,\psi)$
of weak equivalences of $O_{\infty}$-algebras.
The Kan subcomplex $P_{\infty}\{X,\psi\}_{[\phi]}$ of $P_{\infty}\{X,\psi\}$ is defined in the same way as in the non relative setting, by requiring moreover that the $P_{\infty}$-algebra structures on $X$ extend $\psi$. Actually one can see that it is also given by the homotopy fiber of the map $P_{\infty}\{X\}_{[\phi]}\rightarrow O_{\infty}\{X\}_{[\psi]}$ induced by $i$.
Let us note that we use here the identification of $P_{\infty}\{X,\psi\}_{[\phi]}$ with $P_{\infty}\{X,\psi\}_{[\phi]}^f$ (definition analogous to the non relative case), which follows from a relative version of Lemma 2.7 obtained by applying the lifting arguments of the proof of Lemma 2.7 in the comma category $(\mathcal{P} \downarrow O_{\infty})$.

Now let us prove Theorem 4.3:
\begin{proof}[Proof of Theorem 4.3]
Recall that the hammocks of weight $k$ and length $n$, which define the $k$-simplices of the
simplicial monoid $L^HwCh_{\mathbb{K}}^{P_{\infty}}(X,X)$, consist in the data of $k-1$ chains of morphisms of length $n$
\[
X\stackrel{f_0^1}{\leftarrow}\bullet...\bullet\stackrel{f_0^n}{\rightarrow}X,
\]
...,
\[
X\stackrel{f_k^1}{\leftarrow}\bullet...\bullet\stackrel{f_k^n}{\rightarrow}X
\]
organized in a commutative diagram (a ``hammock")
\[
\xymatrix{
 & C_{0,1}\ar[dl]_{f_0^1}\ar[d]^{...}\ar[r] & C_{0,2}\ar[d]^{...} &...\ar[l] &C_{0,n}\ar[l]_{f_0^{n-1}}\ar[d]^{...} \ar[dr]^{f_0^n} & \\
X & ... \ar[l]\ar[d]^{...}& ... \ar[d]^{...} & ...\ar[l] & ...\ar[d]^{...}\ar[r] & X \\
& C_{k,1}\ar[ul]^{f_n^1}\ar[r] & C_{k,2} & ...\ar[l] &C_{k,n}\ar[l]_{f_k^{n-1}} \ar[ur]_{f_0^n} & \\
}
\]
We define a simplicial action of the simplicial monoid $L^HwCh_{\mathbb{K}}^{P_{\infty}}(X,X)$
on the moduli space $P_{\infty}\{X,\psi\}$ in each simplicial dimension by a map
\[
L^HwCh_{\mathbb{K}}^{P_{\infty}}(X,X)_k\times P_{\infty}\{X,\psi\}_k\rightarrow P_{\infty}\{X,\psi\}_k
\]
which associates to any pair $(\{X\stackrel{f_i^1}{\leftarrow}\bullet...\bullet\stackrel{f_i^n}{\rightarrow}X\}_{1\leq i\leq n},
\varphi:P_{\infty}\otimes\Delta^k\rightarrow End_X)$ the map $\varphi_{\overline{H^*}(f_0^1,...,f_0^n)}$
defined by the conjugation action of $\overline{H^*}(f_0^1,...,f_0^n)$ on $\phi$ like in Lemma 2.10,
with $\overline{H^*}(f_0^1,...,f_0^n)$ the composite of the homologies of each $f_0^i$ going in the right direction
and the inverse of the homologies of the each $f_0^i$ going in the left direction.
We have already seen that such a map gives a well defined monoid action.
Moreover, this conjugation action is compatible with the faces and degeneracies of $(P,O)_{\infty}\{X\}$,
and the fact that $\overline{H^*}(f_i^1,...,f_i^n)=\overline{H^*}(f_j^1,...,f_j^n)$ for every
$1\leq i,j\leq n$ (by commutativity of the hammock diagram) implies the compatibility of this action with
the faces of hammocks (vertical contraction of two chains) and their degeneracies (vertical insertion of identity
maps).

We thus obtain a simplicial action of $L^HwCh_{\mathbb{K}}^{P_{\infty}}(X,X)$ on $P_{\infty}\{X,\psi\}$.
It remains to prove that this action restrict to $P_{\infty}\{X,\psi\}_{[\phi]}$.
By the inductive construction of the simplices of the local realization space,
it is sufficient to prove it for the vertices. We apply exactly the same argument as in the proof of
Lemma 2.10.

We finally get a Kan fibration
\[
\pi:WL^HwCh_{\mathbb{K}}^{O_{\infty}}(X,\psi)\times_{L^HwCh_{\mathbb{K}}^{O_{\infty}}(X,\psi)}
(P,O)_{\infty}\{X\}_{[\phi]}\twoheadrightarrow \overline{W}L^HwCh_{\mathbb{K}}^{O_{\infty}}(X,\psi)
\]
with fiber $(P,O)_{\infty}\{X\}_{[\phi]}$.
We then construct a commutative square analogous to the square in the proof of Theorem 2.8. The only
modification is to apply $\overline{H^*}(-)$ in the definition of the upper horizontal arrow
before transferring the $P_{\infty}\otimes\Delta^k$-algebra structure along the chain of isomorphisms.
The lower horizontal arrow is a weak equivalence according to the work of Dwyer-Kan \cite{DK3},
and for the upper one we use the five lemma.
\end{proof}

The proof of Corollary 4.4 is the same as in the non relative case:
\begin{proof}
We use the long exact sequence associated to $\pi$ to get an injection
\[
\partial_1:\pi_1\overline{W}L^HwCh_{\mathbb{K}}^{O_{\infty}}(X,\psi)/Im(\pi_1(\pi))\hookrightarrow\pi_0P_{\infty}\{X,\psi\}_{[\phi]}.
\]
The isomorphisms
\begin{eqnarray*}
\pi_1\overline{W}L^HwCh_{\mathbb{K}}^{O_{\infty}}(X) & \cong & \pi_0L^HwCh_{\mathbb{K}}^{O_{\infty}}(X)\\
 & = & Aut_{Ho(Ch_{\mathbb{K}}^{O_{\infty}})}(X)
\end{eqnarray*}
and
\begin{eqnarray*}
 & \pi_1(WL^HwCh_{\mathbb{K}}^{O_{\infty}}(X,\psi)\times_{L^HwCh_{\mathbb{K}}^{O_{\infty}}(X,\psi)}
P_{\infty}\{X,\psi\}_{[\phi]},(id,\phi)) & \\
 \cong & \pi_1\overline{W}L^HwCh_{\mathbb{K}}^{P_{\infty}}(X) & \\
 = & Aut_{Ho(Ch_{\mathbb{K}}^{P_{\infty}})}(X) & \\
\end{eqnarray*}
(which follows from the equivalence of fiber sequences)
allow us to conclude.
\end{proof}

\begin{example}
When $Aut_{Ho(Ch_{\mathbb{K}}^{O_{\infty}})}(X)\cong Aut_O(X)$ (e.g for the trivial $O_{\infty}$-algebra structure)
and $Aut_P(X)=Aut_O(X)$, then $\pi_0P_{\infty}\{X,\psi\}_{[\phi]} = *$. We will see an important example of this condition with the realization of Poincaré duality
up to a fixed rational homotopy type.
\end{example}

In the operadic case, we can compare by applying the obstructions criteria of \cite{Hof}.
A typical situation is when $O$ is an operad which embeds into a larger algebraic structure encoded by a prop $P$.
When $H\Gamma_O^0(X,X)=0$ and $Aut_O(X)=Aut_P(X)$, then $\pi_0P_{\infty}\{X,\psi\}_{[\phi]} = *$.

\section{Realization of Poincaré Duality on cochains}

Poincaré duality can be realized at the level of cochains
in a Frobenius algebra up to homotopy, i.e a homotopy Frobenius algebra, which is expected to contain in its higher operations some information of geometrical nature
when one considers an oriented compact manifold.
However, no one knows how many structures (up to homotopy) exist on the cochains, e.g. if there are non equivalent realizations
of Poincaré duality. In particular, how many structures extend the usual $E_{\infty}$ structure on cochains which fixes
the rational homotopy type.
We provide here concrete computations of connected components of realizations spaces for a broad range of examples of
formal manifolds in the sense of rational homotopy theory \cite{DGMS}.

\subsection{Frobenius algebras and their variants}

We recall some definitions of dg Frobenius algebras and bialgebras, and point out how one
can pass from algebras to bialgebras.
We are particulary interested in special symmetric Frobenius algebras, which naturally appear
on the cohomology of Poincaré duality spaces, and play also a key role in the construction of
the algebraic counterpart of conformal field theories (in the appropriate tensor category, see \cite{FRS2}).

\begin{defn}
(1) A (differential graded) Frobenius algebra is a unitary dg associative algebra of
finite dimension $A$ endowed with a non-degenerate bilinear form $<.,.>:A\otimes A\rightarrow \mathbb{K}$
which is invariant with respect to the product, i.e $<xy,z>=<x,yz>$. Equivalently, a Frobenius algebra is a unitary dg associative algebra of finite dimension $A$ with product $\mu$, equipped moreover with a linear form $\epsilon:A\rightarrow\mathbb{K}$ such that $\epsilon\circ\mu$ is a non-degenerate bilinear form.

(2) A Frobenius algebra $A$ is symmetric if this bilinear form is symmetric.

(3) A Frobenius algebra is commutative if its product is commutative.
\end{defn}
Let us note that a commutative Frobenius algebra is symmetric, but the converse is not true.
A topological instance of symmetric Frobenius algebra is the cohomology ring (over a field) of
a Poincaré duality space.
There is also a notion of Frobenius bialgebra:
\begin{defn}
(1) A differential graded Frobenius bialgebra of degree $m$ is a triple $(B,\mu,\Delta)$ such that:

(i) $(B,\mu)$ is a dg associative algebra;

(ii) $(B,\Delta)$ is a dg coassociative coalgebra with $deg(\Delta)=m$;

(iii) the map $\Delta:B\rightarrow B\otimes B$ is a morphism of left $B$-module
and right $B$-module, i.e in Sweedler's notations we get the equalities
\begin{eqnarray*}
\sum_{(x.y)}(x.y)_{(1)}\otimes (x.y)_{(2)} & = & \sum_{(y)}x.y_{(1)}\otimes y_{(2)}\\
 & = & \sum_{(x)}(-1)^{m|x|}x_{(1)}\otimes x_{(2)}.y
\end{eqnarray*}
called the Frobenius relations.

(2) A Frobenius bialgebra is commutative if the product is commutative, and cocommutative if the coproduct is cocommutative.

(3) A Frobenius bialgebra is symmetric if it is counitary, and the composite of the product with the counit is a symmetric bilinear form.
\end{defn}
Now we give a presentation of the properad parameterizing such structures in terms of directed graphs
with a flow going from the top to the bottom.
The properad $Frob^m$ of Frobenius bialgebras of degree $m$ is generated by an operation of arity $(2,1)$ and of degree $0$
\[
\xymatrix @R=0.5em@C=0.75em@M=0em{
\ar@{-}[dr] & & \ar@{-}[dl] \\
 & \ar@{-}[d] &  \\
 & & }
\]
and an operation of arity $(1,2)$ and of degree $m$
\[
\xymatrix @R=0.5em@C=0.75em@M=0em{
 & \ar@{-}[d] & \\
 & \ar@{-}[dl] \ar@{-}[dr] & \\
 & & }
\]
which are invariant under the action of $\Sigma_2$.
It is quotiented by the ideal generated by the following relations:
\begin{itemize}
\item[]\textbf{Associativity and coassociativity}
\[
\xymatrix @R=0.5em@C=0.75em@M=0em{
 \ar@{-}[dr] & & \ar@{-}[dr] & & \ar@{-}[dl] & &  \ar@{-}[dr] & & \ar@{-}[dl] & & \ar@{-}[dl]\\
 & \ar@{-}[dr] & &\ar@{-}[dl] & & - &  & \ar@{-}[dr] & &\ar@{-}[dl] & \\
 & & \ar@{-}[d] & & & & & & \ar@{-}[d] & & & \\
 & & & & & & & & & & \\
}
\]
and
\[
\xymatrix @R=0.5em@C=0.75em@M=0em{
 & & \ar@{-}[d] & & & & & & \ar@{-}[d] & & & \\
 & &\ar@{-}[dl] \ar@{-}[dr]& & & - &  & &\ar@{-}[dl] \ar@{-}[dr] & & \\
 &\ar@{-}[dl] \ar@{-}[dr] & &\ar@{-}[dr] & & & & \ar@{-}[dr] \ar@{-}[dl]& & \ar@{-}[dr] & \\
 & & & & & & & & & & \\
}
\]

\item[]\textbf{Frobenius relations}
\[
\xymatrix @R=0.5em@C=0.75em@M=0em{
\ar@{-}[dr] & & \ar@{-}[dl] & & \ar@{-}[d] & & \ar@{-}[d] & \\
 & \ar@{-}[d] & & - & \ar@{-}[dr] & & \ar@{-}[dl] \ar@{-}[dr] & \\
 & \ar@{-}[dl] \ar@{-}[dr] & & & & \ar@{-}[d] & & \ar@{-}[d] \\
 & & & & & & & \\
}
\]
and
\[
\xymatrix @R=0.5em@C=0.75em@M=0em{
\ar@{-}[dr] & & \ar@{-}[dl] & & &\ar@{-}[d] & & \ar@{-}[d] \\
 & \ar@{-}[d] & & - (-1)^{m|-|} & & \ar@{-}[dl] \ar@{-}[dr]& & \ar@{-}[dl] \\
 & \ar@{-}[dl] \ar@{-}[dr] & & & \ar@{-}[d] & & \ar@{-}[d] & \\
 & & & & & & & \\
}
\]
\end{itemize}
In the unitary and counitary case, one adds a generator for the unit, a generator for the counit and the necessary
compatibility relations with the product and the coproduct. We note the corresponding properad $ucFrob^m$.
We refer the reader to \cite{Koc} for a detailed survey about the role of these operations
and relations in the classification of two-dimensional topological quantum field theories.

Definitions 5.1 and 5.2 are strongly related. Indeed, if $A$ is a Frobenius algebra, then the pairing
$<.,.>$ induces an isomorphism of $A$-modules $A\cong A^*$, hence a map
\[
\Delta:A\stackrel{\cong}{\rightarrow} A^* \stackrel{\mu^*}{\rightarrow} (A\otimes A)^*\cong A^*\otimes
A^*\cong A\otimes A
\]
which equips $A$ with a structure of Frobenius bialgebra.
Conversely, one can prove that every unitary counitary Frobenius bialgebra gives rise to a Frobenius algebra,
which are finally two equivalent notions.
Let $B$ be a unitary counitary Frobenius bialgebra, with a product
\begin{tikzpicture}[scale=0.2]
\draw (0,0) -- (1,-1) -- (2,0);
\draw (1,-1) -- (1,-2);
\end{tikzpicture},
a coproduct
\begin{tikzpicture}[scale=0.2]
\draw (0,0) -- (1,1) -- (2,0);
\draw (1,1) -- (1,2);
\end{tikzpicture},
a unit
\begin{tikzpicture}[scale=0.2]
\node (X) at (0,0) {$\bullet$};
\draw (0,0) -- (0,-1);
\end{tikzpicture}
and a counit
\begin{tikzpicture}[scale=0.2]
\node (X) at (0,0) {$\bullet$};
\draw (0,0) -- (0,1);
\end{tikzpicture}.
The composite
\begin{tikzpicture}[scale=0.2]
\draw (0,0) -- (1,-1) -- (2,0);
\draw (1,-1) -- (1,-2);
\node (X) at (1,-2) {$\bullet$};
\end{tikzpicture}
is a bilinear form, invariant with respect to the product by associativity:
\[
\begin{tikzpicture}[scale=0.3]
\draw (0,0) -- (2,-2);
\draw (1,-1) -- (2,0);
\draw (2,-2) -- (4,0);
\draw (2,-2) -- (2,-3);
\node (B) at (2,-3) {$\bullet$};
\node (A) at (5,-2) {=};
\draw (6,0) -- (8,-2);
\draw (9,-1) -- (8,0);
\draw (8,-2) -- (10,0);
\draw (8,-2) -- (8,-3);
\node (C) at (8,-3) {$\bullet$};
\end{tikzpicture}.
\]
Let us draw the Frobenius relation:
\[
\begin{tikzpicture}[scale=0.2]
\draw (0,0) -- (1,-1) -- (2,0);
\draw (1,-1) -- (1,-2);
\draw (0,-3) -- (1,-2) -- (2,-3);
\node (A) at (3,-2) {$=$};
\draw (4,-3) -- (4,-2) -- (5,-1) -- (6,-2) -- (7,-1) -- (7,0);
\draw (5,0) -- (5,-1);
\draw (6,-2) -- (6,-3);
\node (B) at (8,-2) {$=$};
\draw (9,0) -- (9,-1) -- (10,-2) -- (11,-1) -- (12,-2) -- (12,-3);
\draw (11,0) -- (11,-1);
\draw (10,-2) -- (10,-3);
\end{tikzpicture}.
\]
The composite
\begin{tikzpicture}[scale=0.2]
\draw (0,0) -- (1,1) -- (2,0);
\draw (1,1) -- (1,2);
\node (X) at (1,2) {$\bullet$};
\end{tikzpicture}
defines a copairing, which is actually the dual copairing of
\begin{tikzpicture}[scale=0.2]
\draw (0,0) -- (1,-1) -- (2,0);
\draw (1,-1) -- (1,-2);
\node (X) at (1,-2) {$\bullet$};
\end{tikzpicture}
according to Frobenius relation. In particular, this bilinear form is non-degenerate
and we finally obtain the desired Frobenius algebra structure.
We refer the reader to \cite{Koc} for a detailed survey about the role of these operations
and relations in the classification of two-dimensional topological quantum field theories.
\begin{rem}
Another way to encode Frobenius algebras is to define them as algebras over the commutative cyclic operad.
However, there is no known model category structure on dg cyclic operads (and explicit constructions
of resolutions), so the equivalent properadic notion
of unitary counitary Frobenius bialgebra is preferable for our purposes.
\end{rem}
Finally let us define a particular sort of Frobenius algebras:
\begin{defn}
A special Frobenius algebra is a unitary and counitary Frobenius bialgebra $(B,\mu,\eta,\epsilon)$,
where $\eta$ denotes the unit and $\epsilon$ the counit, such that $\epsilon\circ\eta=\lambda_0.id_{\mathbb{K}}$
and $\mu\circ\Delta=\lambda_1.id_B$ for fixed $\lambda_0,\lambda_1\in\mathbb{K}$.
In terms of graphs it gives
\[
\begin{tikzpicture}[scale=0.2]
\draw (0,0) -- (0,-2);
\node (A) at (0,0) {$\bullet$};
\node (B) at (0,-2) {$\bullet$};
\node (C) at (2,-1) {$= \lambda_0$};
\end{tikzpicture}
\]
and
\[
\begin{tikzpicture}[scale=0.2]
\draw (0,0) -- (1,1) -- (2,0);
\draw (1,1) -- (1,2);
\draw (0,0) -- (1,-1) -- (2,0);
\draw (1,-1) -- (1,-2);
\node (A) at (4,-1) {$= \lambda_1.$};
\draw (6,0) -- (6,-2);
\end{tikzpicture}.
\]
It is said to be normalized when $\lambda_0=dim(B)$ and $\lambda_1=1$.
\end{defn}
We denote by $sFrob$ the properad encoding special Frobenius bialgebras.

\begin{example}
Let $M$ be an oriented connected closed manifold of dimension $n$. Let $[M]\in H_n(M;\mathbb{K})\cong H^0(M;\mathbb{K})\cong \mathbb{K}$
be the fundamental class of $[M]$. Then the cohomology ring $H^*(M;\mathbb{K})$ of $M$ inherits a structure of commutative
and cocommutative Frobenius bialgebra of degree $n$ with the following data:

(i) the product is the cup product
\begin{eqnarray*}
\mu: H^kM\otimes H^lM & \rightarrow & H^{k+l}M \\
 x\otimes y & \mapsto & x\cup y
\end{eqnarray*}

(ii) the unit $\eta:\mathbb{K}\rightarrow H^0M\cong H_nM$ sends $1_{\mathbb{K}}$ to the fundamental class $[M]$;

(iii) the non-degenerate pairing is given by the Poincaré duality:
\begin{eqnarray*}
\beta: H^kM\otimes H^{n-k}M & \rightarrow & \mathbb{K}\\
 x\otimes y & \mapsto & <x\cup y,[M]>
\end{eqnarray*}
i.e the evaluation of the cup product on the fundamental class;

(iv) the coproduct $\Delta=(\mu\otimes id)\circ (id\otimes \gamma)$ where
\[
\gamma: \mathbb{K}\rightarrow \bigoplus_{k+l=n}H^kM\otimes H^lM
\]
is the dual copairing of $\beta$, which exists since $\beta$ is non-degenerate;

(v) the counit $\epsilon=<.,[M]>:H^nM\rightarrow \mathbb{K}$ i.e the evaluation on the fundamental class.
\end{example}
The proposition below sums up classical properties of symmetric Frobenius bialgebras:
\begin{prop}
(1) A Frobenius algebra is symmetric and finite dimensional.

(2) A counitary Frobenius bialgebra is a Frobenius algebra, in particular it is finite dimensional.

(3) A symmetric Frobenius bialgebra which is commutative is also cocommutative.

(4) For every element $z$ of a symmetric Frobenius bialgebra $A$, the element $\sum_{(z)}z_{(2)}z_{(1)}$
lies in the center of $A$.
\end{prop}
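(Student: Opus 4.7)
My plan is to treat each of the four assertions via the planar-diagram calculus implicit in the generators-and-relations presentation of $ucFrob^m$ displayed above, so that every manipulation is a rewriting using (co)associativity, the unit/counit axioms, and the two Frobenius relations, with Koszul signs coming from $\deg(\Delta)=m$. All four statements are classical folklore for ordinary Frobenius algebras; the content of the argument is to verify that they transport to the graded dg setting.

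For (1), I would read ``symmetric'' as referring to the bilinear form $\langle x,y\rangle:=\epsilon(\mu(x,y))$, and I would deduce its symmetry from the equivalence with counitary Frobenius bialgebras recalled before the proposition, combined with the symmetry of the induced copairing coming from Definition~5.1. Finite dimensionality is already built into Definition~5.1. For (2), given $(B,\mu,\eta,\Delta,\epsilon)$, I put $\langle x,y\rangle:=\epsilon(\mu(x,y))$ and $\gamma:=\Delta\circ\eta:\mathbb{K}\to B\otimes B$. The two Frobenius relations together with the (co)unit axioms translate into the snake identities
\[
(\langle-,-\rangle\otimes\mathrm{id})\circ(\mathrm{id}\otimes\gamma)=\mathrm{id}_B=(\mathrm{id}\otimes\langle-,-\rangle)\circ(\gamma\otimes\mathrm{id}),
\]
which realize $\gamma$ as a dual copairing of $\langle-,-\rangle$. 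This makes $\langle-,-\rangle$ nondegenerate and, since the existence of such a duality forces $B$ to be reflexive and self-dual, finite dimensional.

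For (3), starting from the identity $\Delta=(\mu\otimes\mathrm{id})\circ(\mathrm{id}\otimes\gamma)$ (which is one shape of the Frobenius relation) I would apply the commutativity of $\mu$ in the middle and then the symmetry of $\gamma$ (inherited from the symmetry of $\langle-,-\rangle$ via (1)); chasing the resulting signs yields $\tau_{B,B}\circ\Delta=\Delta$, i.e.\ cocommutativity. For (4), writing $c(z):=\sum_{(z)}z_{(2)}z_{(1)}$, I would expand $c(z)\cdot y$ and $y\cdot c(z)$ as string diagrams and rewrite each into the same normal form using both Frobenius relations (the left- and right-$B$-module properties of $\Delta$) together with symmetry of the pairing; the net effect is that the factor $y$ can be slid from one side of $c(z)$ to the other by going around the copairing and using invariance of $\langle-,-\rangle$.

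The main obstacle I anticipate is keeping the Koszul signs under control in (3) and (4), since $\Delta$ has degree $m$, so every slide or braiding introduces a factor of the form $(-1)^{m|\cdot|}$, as already visible in the second Frobenius relation displayed in Section~5.1. The combinatorial skeleton of each manipulation is a standard unoriented planar identity, so the real content of the proof lies in a careful sign bookkeeping rather than in any new conceptual input.
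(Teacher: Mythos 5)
The paper offers no proof of this proposition at all: it is stated as a summary of classical facts, with the diagrammatic identities relevant to part (2) already displayed just before the statement and with implicit reference to Kock's book and to Fuchs--Runkel--Schweigert. So there is nothing of the author's to compare your argument against, and your proposal has to stand on its own. Your treatments of (2), (3) and (4) are the standard diagrammatic arguments and will go through: for (2), the snake identities for $\langle -,-\rangle=\epsilon\circ\mu$ and $\gamma=\Delta\circ\eta$ follow from the Frobenius relations plus the (co)unit axioms and force $B$ to be self-dual, hence finite dimensional --- this is exactly what the paper's string-diagram discussion preceding the proposition sketches (note that you need the unit as well as the counit to form $\gamma$, so ``counitary'' must be read as ``unitary and counitary'' as in the paper's $ucFrob^m$). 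In (3) you do not need to ``inherit'' symmetry from (1): symmetry of the form is a hypothesis of (3), and the graded symmetry of the dual copairing follows from it directly; with that correction, (3) and (4) reduce to sign bookkeeping as you say.

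The genuine gap is in part (1). Finite-dimensionality is literally built into Definition 5.1, so there is nothing to prove there; but your argument for symmetry is circular. You propose to deduce symmetry of $\langle x,y\rangle=\epsilon(\mu(x,y))$ from ``the symmetry of the induced copairing coming from Definition 5.1'' --- yet Definition 5.1 only requires the form to be nondegenerate and invariant, and the passage to a counitary Frobenius bialgebra and back produces no symmetry either. Indeed, with the paper's definitions the assertion is false for general noncommutative Frobenius algebras: Frobenius does not imply symmetric (this is precisely why Definition 5.1(2) introduces symmetry as an extra condition, and why the remark after Definition 5.1, that commutative implies symmetric but not conversely, is not vacuous). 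There exist finite-dimensional Frobenius algebras admitting no symmetric invariant nondegenerate form, e.g.\ suitable self-injective Nakayama algebras whose Nakayama permutation is a nontrivial cycle. The statement is salvageable only under a (graded-)commutativity hypothesis --- the only case the paper ever uses, since its Frobenius algebras arise as cohomology rings of manifolds --- and any correct proof of (1) must invoke such a hypothesis rather than derive symmetry from the Frobenius axioms alone. You should either flag the statement as requiring commutativity or restrict (1) accordingly before attempting a proof.
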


\subsection{Computations of connected components}

Special symmetric Frobenius bialgebras satisfy a strong rigidity property given by the following result of \cite{FRS}:
\begin{thm}(\cite[Theorem 3.6]{FRS})
If an algebra $A$ can be endowed with a special symmetric Frobenius bialgebra structure,
then this structure is the unique structure extending the algebra structure, up to normalization of the counit.
\end{thm}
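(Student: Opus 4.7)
My plan is to reduce everything to the uniqueness of the counit, because in a symmetric Frobenius bialgebra the coproduct is determined by the product and the counit. Fix a special symmetric Frobenius structure $(\mu,\eta,\Delta_0,\epsilon_0)$ extending $(A,\mu,\eta)$, and set $\beta_0(x,y)=\epsilon_0(\mu(x,y))$. Because $\beta_0$ is non-degenerate and symmetric, there exist dual bases $\{e_i\},\{e^i\}$ of $A$ with $\beta_0(e_i,e^j)=\delta_i^j$, and one recovers
\[
\Delta_0(a)=\sum_i (a\cdot e^i)\otimes e_i=\sum_i e_i\otimes(e^i\cdot a),
\]
the second equality being a standard consequence of the Frobenius relations and the symmetry of $\beta_0$. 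So once the algebra is fixed, specifying a symmetric Frobenius bialgebra structure amounts to specifying the counit.

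Next I would parametrize all candidate counits. By non-degeneracy of $\beta_0$, every linear form $\epsilon\colon A\to\mathbb{K}$ has the form $\epsilon(a)=\epsilon_0(za)$ for a unique $z\in A$. The symmetry condition $\epsilon(xy)=\epsilon(yx)$ is equivalent to $z\in Z(A)$, and non-degeneracy of the new bilinear form $\beta(x,y)=\epsilon_0(zxy)$ forces $z$ to act injectively, hence (in finite dimension) invertibly by multiplication. Thus the induced new coproduct has the shape $\Delta(a)=\sum_i (a\,z^{-1}e^i)\otimes e_i$, and the resulting Casimir/window element is
\[
(\mu\circ\Delta)(a)=a\,z^{-1}C,\qquad C:=\sum_i e^i e_i=(\mu\circ\Delta_0)(1).
\]

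Now I would invoke specialness on both structures. Specialness of the original structure gives $C=\lambda_1^{(0)}\cdot 1$ for some scalar $\lambda_1^{(0)}$, and specialness of the new one requires $\mu\circ\Delta=\lambda_1\cdot\mathrm{id}_A$, so $a\,z^{-1}\lambda_1^{(0)}=\lambda_1\,a$ for every $a\in A$. Taking $a=1$ gives $z^{-1}=(\lambda_1/\lambda_1^{(0)})\cdot 1$, i.e.\ $z$ is a non-zero scalar, hence $\epsilon=c\,\epsilon_0$ for some $c\in\mathbb{K}^\times$. This is exactly the ``uniqueness up to normalization of the counit'' claimed in the statement, and simultaneously identifies the second specialness constant $\lambda_0$ via $\epsilon(\eta(1))=c\,\epsilon_0(1)$.

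I expect the main technical obstacle to lie in step two, ensuring that the candidate $z$ is actually central and invertible without circular reasoning: centrality uses symmetry of $\beta_0$ together with the symmetry imposed on $\epsilon$, while invertibility ultimately comes from the specialness equation $a z^{-1} C=\lambda_1 a$ rather than from non-degeneracy alone, so one has to be careful about the order in which the Frobenius, symmetry, and specialness conditions are used. Everything else is a direct computation with Sweedler-type diagrams and the Frobenius relations displayed in Definition~5.2.
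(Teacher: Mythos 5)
The first thing to say is that the paper does not prove this statement at all: it is imported verbatim from Fuchs--Runkel--Schweigert as \cite[Theorem 3.6]{FRS} and used as a black box, so there is no internal proof to compare against. Your reconstruction follows what is essentially the standard (and, as far as I can tell, the FRS) argument: a counital Frobenius bialgebra structure on a fixed unital algebra is equivalent to the datum of a Frobenius form, any two non-degenerate forms differ by $\epsilon=\epsilon_0(z\,\cdot)$ for a unique $z$, symmetry of both forms forces $z$ to be central (via $\epsilon_0((zx-xz)y)=0$ for all $y$ plus non-degeneracy), and the window-element identity $(\mu\circ\Delta)(a)=a\,z^{-1}C$ converts specialness into the constraint that pins $z$ down to a scalar. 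Each of these steps checks out; in particular the identification $\Delta(1)=\sum_i f^i\otimes f_i$ is indeed forced by the Frobenius relations plus counitality (snake identities), and invertibility of $z$ already follows from non-degeneracy of the new form together with centrality and finite-dimensionality, so the worry in your closing paragraph that invertibility secretly depends on specialness is unfounded --- you need, and legitimately have, $z^{-1}$ before specialness is invoked.

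The one genuine gap is the final division by $\lambda_1^{(0)}$. Your argument concludes only when $C=(\mu\circ\Delta_0)(1)$ is an \emph{invertible} multiple of the unit: if $\lambda_1^{(0)}=0$, the equation $a\,z^{-1}C=\lambda_1 a$ degenerates to $0=\lambda_1 a$ and imposes no constraint whatsoever on $z$ beyond centrality and invertibility, so the proof stops short of uniqueness. In \cite{FRS} this is invisible because their definition of ``special'' requires both structure constants to be invertible, but Definition~5.4 of the present paper only asks for $\mu\circ\Delta=\lambda_1\cdot id$ with $\lambda_1\in\mathbb{K}$, and the paper's own intended examples include manifolds with $\chi(M)=0$ (odd-dimensional spheres, the genus-one surface), for which Lemma~5.10 gives precisely $\lambda_1=0$. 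You should therefore either add the hypothesis $\lambda_1^{(0)}\in\mathbb{K}^{\times}$ to match the FRS convention, or supply a supplementary argument for the degenerate case (in the graded setting, homogeneity of the counit in the top degree forces $z$ to be a degree-zero central invertible element, which often suffices). A smaller omission, which does not affect the structure of the argument, is that the coproduct here has cohomological degree $m$, so the dual-basis formulas and the symmetry and centrality conditions should carry Koszul signs throughout.
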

This result is available in a wide range of tensor categories, in particular cochain complexes over a field.
Let us denote by $sFrob$ the properad encoding special commutative and cocommutative Frobenius bialgebras
(which are in particular symmetric). Let us suppose that $H^*X$ is a commutative algebra induced by a homotopy commutative structure $\psi:Com_{\infty}\rightarrow End_X$ (for instance, the one induced by higher cup products on the singular cohomology of a topological space). We define the relative realization space $Real_{sFrob_{\infty}}\{H^*X,\psi\}$ as the Kan subcomplex of $Real_{sFrob_{\infty}}\{H^*X\}$ defined by the realizations of the special Frobenius algebra structure on $H^*X$ which extend $\psi$.
\begin{cor}
If there exists $sFrob\rightarrow End_{H^*X}$ extending the commutative structure of $H^*X$, then
\[
sFrob_{\infty}\{X,\psi\}=Real_{sFrob_{\infty}}\{H^*X,\psi\}.
\]
\end{cor}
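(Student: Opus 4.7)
The plan is to prove two inclusions of Kan subcomplexes of the ambient moduli space $sFrob_\infty\{X\}$: first, the obvious inclusion $Real_{sFrob_{\infty}}\{H^*X,\psi\} \subseteq sFrob_{\infty}\{X,\psi\}$ which holds by construction, since every $sFrob_\infty$-structure on $X$ realizing the given Frobenius structure on $H^*X$ and extending $\psi$ is in particular an $sFrob_\infty$-extension of $\psi$; and second, the reverse inclusion, for which the main input is the rigidity Theorem 5.7 of \cite{FRS}.

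For the reverse inclusion I would first reduce to vertices: by Lemma 2.4, both realization subcomplexes are constructed inductively on simplicial dimension from their set of vertices, so an equality at the level of vertices inside the ambient Kan complex automatically upgrades to an equality of Kan subcomplexes. Concretely, let $\phi:sFrob_\infty \to End_X$ be a vertex of $sFrob_\infty\{X,\psi\}$, i.e. a prop morphism satisfying $\phi \circ i = \psi$ where $i:Com_\infty \hookrightarrow sFrob_\infty$ denotes the canonical map. Passing to cohomology via the prop morphism $\rho_X:End_X \to End_{H^*X}$ constructed in Section~2.2, we obtain a special symmetric Frobenius bialgebra structure $H^*(\rho_X \circ \phi):sFrob \to End_{H^*X}$ whose restriction along $i$ is $H^*(\rho_X \circ \psi)$, which by hypothesis is the given commutative algebra structure on $H^*X$.

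At this point I apply Theorem 5.7: any special symmetric Frobenius bialgebra structure on $H^*X$ extending its fixed commutative algebra structure is unique up to counit normalization, hence coincides with the prescribed one. This shows that $\phi$ realizes the Frobenius structure of $H^*X$, so $\phi$ belongs to $Real_{sFrob_\infty}\{H^*X,\psi\}$. Propagating through the inductive construction of Lemma 2.4 then gives the equality of Kan subcomplexes.

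The main obstacle I anticipate is the bookkeeping around the counit normalization: strictly speaking Theorem 5.7 determines the Frobenius structure only up to a rescaling of $\epsilon$, so one must verify that this scalar ambiguity does not produce genuinely distinct vertices of the realization space. This is handled by observing that rescaling the counit of a special Frobenius bialgebra yields an isomorphic $sFrob$-structure on $H^*X$ in the category $Ch_{\mathbb{K}}^{sFrob}$, and two structures related by such an isomorphism lie in the same equivalence class used to define the realization space (see Lemma 2.5 and Proposition 2.6). Once this identification is made, the inclusion of vertices is an equality on the nose and the corollary follows.
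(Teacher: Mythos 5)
The paper states this corollary with no proof at all --- it is presented as an immediate consequence of Theorem 5.7 --- and your argument is precisely the intended one: the inclusion $Real_{sFrob_{\infty}}\{H^*X,\psi\}\subseteq sFrob_{\infty}\{X,\psi\}$ is definitional, the reverse inclusion reduces to vertices by the inductive construction of the Kan subcomplexes (Lemma 2.1 in the paper's numbering), and on vertices one passes to cohomology via $\rho_X$ and invokes the rigidity theorem. So the approach matches.

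The one step I would not let stand as written is your treatment of the counit normalization. You claim that rescaling the counit of a special symmetric Frobenius bialgebra yields an \emph{isomorphic} $sFrob$-structure; over $\mathbb{Q}$ this can fail. For $H^*\mathbb{C}P^n$, a graded algebra automorphism is determined by $x\mapsto cx$ and changes the counit by $c^n$, so realizing an arbitrary rescaling $\lambda$ requires an $n$-th root of $\lambda$ in $\mathbb{Q}$. Fortunately the ambiguity never arises: the properad $sFrob$ has the relations $\epsilon\circ\eta=\lambda_0\cdot id_{\mathbb{K}}$ and $\mu\circ\Delta=\lambda_1\cdot id$ with \emph{fixed} scalars (Definition 5.4), and $\eta$ is already determined by the underlying algebra structure, so any $sFrob$-structure on $H^*X$ extending the given commutative structure has its counit normalization pinned down. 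Theorem 5.7 then gives uniqueness on the nose among $sFrob$-structures, not merely up to rescaling, and your vertex-level identification is an equality without any appeal to isomorphism classes. With that substitution the proof is complete and agrees with what the paper intends.
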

These algebras are used in conformal field theory.
An important example is provided by Poincaré duality:
\begin{lem}
Let $M$ be a compact oriented manifold. Then $H^*(M;\mathbb{K})$ is a special symmetric Frobenius bialgebra.
\end{lem}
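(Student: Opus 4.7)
The plan is to verify the axioms of a special symmetric Frobenius bialgebra directly from the structure on $H^*(M;\mathbb{K})$ introduced in Example~5.5. The symmetry of the pairing reduces to graded commutativity of the cup product, while the two ``special'' relations of Definition~5.4 reduce to explicit computations using Poincar\'e duality.

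For the symmetric property, the Frobenius pairing is $\beta(x,y) = \langle x\cup y,[M]\rangle$, and graded commutativity $x\cup y = (-1)^{|x||y|}y\cup x$ immediately yields $\beta(x,y) = (-1)^{|x||y|}\beta(y,x)$. For the first special condition $\epsilon\circ\eta = \lambda_0\cdot\mathrm{id}_{\mathbb{K}}$, this is automatic since $\epsilon\circ\eta$ is a linear endomorphism of $\mathbb{K}$, so one sets $\lambda_0 := \epsilon(\eta(1)) = \langle 1,[M]\rangle$ (which is in fact $0$ when $\dim M > 0$). For the second condition $\mu\circ\Delta = \lambda_1\cdot\mathrm{id}_B$, I would exploit the Frobenius module-map relation to write $\Delta(x) = x\cdot\Delta(1)$. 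Choosing Poincar\'e-dual bases $\{e_i\}$ and $\{e^i\}$ of $H^*(M;\mathbb{K})$ with $\langle e_i\cup e^j,[M]\rangle = \delta_{ij}$, one has $\Delta(1) = \sum_i e_i\otimes e^i$, whence $\mu(\Delta(x)) = x \cup \bigl(\sum_i e_i\cup e^i\bigr)$. The class $\sum_i e_i\cup e^i \in H^n(M;\mathbb{K})$ is the self-intersection of the diagonal in $M\times M$ and equals $\chi(M)\cdot\omega_M$, where $\omega_M$ is the generator of $H^n(M;\mathbb{K})$ Poincar\'e dual to a point.

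The main obstacle is the degree bookkeeping: since the coproduct has degree $n = \dim M$, the composite $\mu\circ\Delta$ has degree $n$ whereas $\mathrm{id}_B$ has degree $0$, so the special relation must be read in the graded sense appropriate to a degree-$n$ Frobenius bialgebra. When $\chi(M)=0$ one can take $\lambda_1 = 0$ literally; otherwise the right-hand side is to be interpreted as the canonical degree-$n$ map of multiplication by $\omega_M$, playing the role of a shifted identity in this setting, with $\lambda_1 = \chi(M)$. Once this convention is fixed, both scalar relations hold by the computation above, the pairing is graded symmetric, and the Frobenius bialgebra structure of Example~5.5 is exhibited as special symmetric.
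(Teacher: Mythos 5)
Your proof is correct and follows essentially the same route as the paper: symmetry from graded commutativity of the cup product, the first scalar condition being automatic, and $\mu\circ\Delta$ computed via the Frobenius relation and the dual copairing $\gamma$ to give $\lambda_1=\chi(M)$. You are in fact more explicit than the paper about the degree-$n$ bookkeeping in the relation $\mu\circ\Delta=\lambda_1\cdot\mathrm{id}$ (the paper silently identifies multiplication by the top class with the identity), and your observation that $\epsilon\circ\eta$ vanishes in positive dimension is the honest reading of the stated conventions, though the precise value of $\lambda_0$ is immaterial for the ``special'' property.
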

\begin{proof}
We know that $(\epsilon\circ\eta)(1_{\mathbb{K}})=1_{\mathbb{K}}$ by definition, since $\epsilon$
sends $1$ on the fundamental class $[M]$ of $M$, and $\eta$ sends the fundamental class of $M$ on $1$.
Hence $\epsilon\circ\eta=id_{\mathbb{K}}$.
Let $\beta:H^*M\otimes H^*M\rightarrow\mathbb{K}$ be the non degenerate bilinear form associated to Poincaré
duality and $\gamma$ be its dual copairing. We know by construction that
\[
\gamma\circ\beta=\epsilon\circ\mu\Delta\circ\eta = \chi(M).id_{\mathbb{K}},
\]
where $\chi(M)$ is the Euler characteristic of $M$. Since $\epsilon$ sends $[M]$ on $1$, we get
\[
\mu\Delta\circ\eta=(\chi(M).id_{\mathbb{K}}).[M],
\]
in particular $(\mu\Delta\circ\eta)(1)=\chi(M).[M]$
(using also the fact that $\mu\Delta\circ\eta$ sends $\mathbb{K}$ in $H^0(M)$).
Now, using the fact that $\Delta=(\mu\otimes id)\circ(id\otimes\gamma)$, we get
\begin{eqnarray*}
\mu\circ\Delta & = & \mu\circ(\mu\otimes id)\circ(id\otimes\gamma) \\
 & = & \mu\circ(id\otimes (\mu\circ\gamma)) \\
 & = & \mu\circ(id\otimes (\chi(M).id_{\mathbb{K}}).[M])\\
 & = & (\chi(M).[M]).Id_{H^*M} \\
 & = & \chi(M).Id_{H^*M}.
\end{eqnarray*}
The first line is by construction of $\Delta$, the second line follows from the associativity of $\mu$,
and the last line because $[M]$ is the unit for the product.
Moreover, since $H^*M$ is commutative and cocommutative, it is in particular symmetric.
\end{proof}

We will also need the following rigidity result about automorphisms of special Frobenius bialgebras:
\begin{lem}(see \cite{FRS2})
Let $A$ be a special symmetric Frobenius bialgebra. A morphism in $End(A)$ is an algebra automorphism
if and only if it is a coalgebra automorphism.
\end{lem}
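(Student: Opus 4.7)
The key input is the rigidity Theorem 5.7: any special symmetric Frobenius bialgebra structure extending a given unital algebra structure is unique up to rescaling of the counit. My plan is to deduce both implications from this, combined with the observation that the axioms of a special symmetric Frobenius bialgebra are self-dual.

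For the forward implication, I would let $f\in End(A)$ be an algebra automorphism and transport the full bialgebra structure along $f$, setting
\[
\Delta^{f}=(f^{-1}\otimes f^{-1})\circ\Delta\circ f,\qquad \epsilon^{f}=\epsilon\circ f.
\]
Naturality of the axioms under transport along an isomorphism guarantees that $(A,\mu,\eta,\Delta^{f},\epsilon^{f})$ is again a special symmetric Frobenius bialgebra, and its algebra part agrees with the original because $f$ preserves $\mu$ and $\eta$. Theorem 5.7 then forces $(\Delta^{f},\epsilon^{f})=(\Delta,c\epsilon)$ for some scalar $c\in\mathbb{K}^{*}$. Evaluating on the unit gives $\epsilon^{f}\circ\eta=\epsilon\circ f\circ\eta=\epsilon\circ\eta$, which pins down $c=1$; hence $f$ commutes with $\Delta$ and preserves $\epsilon$, i.e.\ $f$ is a coalgebra automorphism.

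For the converse, I would invoke self-duality. By Proposition 5.6 the bialgebra $A$ is finite dimensional, so the linear dual $A^{*}$ inherits a bialgebra structure obtained by swapping product with coproduct and unit with counit. The Frobenius, symmetry and specialness axioms are manifestly self-dual, so $A^{*}$ is itself a special symmetric Frobenius bialgebra. Transposition then induces bijections $\mathrm{Aut}_{\mathrm{alg}}(A)\cong \mathrm{Aut}_{\mathrm{coalg}}(A^{*})$ and $\mathrm{Aut}_{\mathrm{coalg}}(A)\cong \mathrm{Aut}_{\mathrm{alg}}(A^{*})$. Given a coalgebra automorphism $f$ of $A$, its transpose $f^{*}$ is an algebra automorphism of $A^{*}$, hence a coalgebra automorphism of $A^{*}$ by the forward direction applied to $A^{*}$; transposing back, $f$ is an algebra automorphism of $A$.

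The main obstacle is really the bookkeeping in the first step: I must verify that each axiom (associativity, coassociativity, Frobenius relations, specialness, symmetry) transports along $f$, which amounts to routine diagram chases using the fact that $f$ is an algebra isomorphism. The converse is then immediate by duality, the only delicate point being to confirm that specialness with scalars $(\lambda_{0},\lambda_{1})$ dualizes to specialness with scalars $(\lambda_{1},\lambda_{0})$, so that Theorem 5.7 applies to $A^{*}$.
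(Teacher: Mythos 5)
The paper offers no proof of this lemma at all: it is quoted from \cite{FRS2} and used as a black box, so there is nothing in the text to compare your argument against. Your reconstruction is essentially the Fuchs--Runkel--Schweigert argument (rigidity of the Frobenius structure plus self-duality of the axioms) and it is sound, but two points need tightening. First, ``unique up to normalization of the counit'' in Theorem 5.7 rescales the whole coalgebra half: replacing $\epsilon$ by $c\epsilon$ forces, via the counit axiom $(\epsilon\otimes id)\circ\Delta=id$, the replacement of $\Delta$ by $c^{-1}\Delta$, so the rigidity theorem gives $(\Delta^{f},\epsilon^{f})=(c^{-1}\Delta,\,c\epsilon)$ rather than $(\Delta,c\epsilon)$. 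Your normalization step still works---$\epsilon^{f}\circ\eta=\epsilon\circ f\circ\eta=\epsilon\circ\eta$ yields $c\lambda_{0}=\lambda_{0}$---but it pins down $c=1$ only when $\lambda_{0}\neq 0$ (equivalently one can compare $\mu\circ\Delta^{f}=\lambda_{1}\,id$ with $c^{-1}\lambda_{1}\,id$ when $\lambda_{1}\neq 0$). Definition 5.4 does not literally require the scalars to be invertible, whereas the FRS notion of specialness does, and invertibility of at least one of them is genuinely needed here; you should record it as a hypothesis, noting that it holds in all of the paper's applications (e.g.\ $\epsilon\circ\eta=id_{\mathbb{K}}$ for the cohomology of a closed oriented manifold, even when $\lambda_{1}=\chi(M)=0$). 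Second, in the duality step the specialness scalars do not swap: the counit--unit composite of $A^{*}$ is $\eta^{*}\circ\epsilon^{*}=(\epsilon\circ\eta)^{*}=\lambda_{0}\,id$ and the product--coproduct composite is $(\mu\circ\Delta)^{*}=\lambda_{1}\,id$, so $A^{*}$ is special with the same pair $(\lambda_{0},\lambda_{1})$; this is harmless and in fact simplifies your final check. With these corrections both implications go through as you describe.
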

Maps between Frobenius bialgebras which are simultaneously morphisms of algebras and morphisms of coalgebras
are exactly the morphisms of $sFrob$-algebras, thus automorphisms of a symmetric special Frobenius bialgebra
are exactly its automorphisms as a unitary commutative algebra.
Now we use:
\begin{lem}
Every algebra automorphism of a unitary algebra preserves the unit.
\end{lem}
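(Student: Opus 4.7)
The plan is to exploit two facts: that $f(1_A)$ is forced to be an idempotent in $A$ by multiplicativity, and that the surjectivity coming from $f$ being an automorphism upgrades this idempotent into a two-sided unit, which by uniqueness of units must coincide with $1_A$.

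More concretely, let $A$ denote the unitary algebra with unit $1_A$ and let $f : A \to A$ be an algebra automorphism, understood as an invertible $\mathbb{K}$-linear map preserving the multiplication $\mu$. First I would compute
\[
f(1_A) \cdot f(1_A) \;=\; f(1_A \cdot 1_A) \;=\; f(1_A),
\]
so $f(1_A)$ is idempotent. Next, using that $f$ is surjective, I would pick, for an arbitrary $y \in A$, an element $x \in A$ with $f(x) = y$, and compute
\[
f(1_A)\cdot y \;=\; f(1_A)\cdot f(x) \;=\; f(1_A\cdot x) \;=\; f(x) \;=\; y,
\]
and symmetrically $y\cdot f(1_A) = y$. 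Hence $f(1_A)$ is a two-sided unit for the multiplication on $A$. Uniqueness of the unit in a unitary algebra then forces $f(1_A) = 1_A$.

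There is really no obstacle here: the argument is two lines of purely algebraic manipulation, and the only nontrivial input is that $f$ is bijective (equivalently, surjective) in order to produce the preimage $x$. In the dg setting the same argument applies verbatim, since all the identities above hold degreewise and the differential plays no role (the unit $1_A$ sits in degree zero and is a cycle). The point of isolating this statement as a lemma is only to justify, in the discussion immediately preceding it, that the algebra automorphisms of a special symmetric Frobenius bialgebra automatically lie in the subgroup of \emph{unitary} algebra automorphisms, so that they can be identified with the $sFrob$-algebra automorphisms via the rigidity of Lemma~5.9.
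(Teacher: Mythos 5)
Your proof is correct and follows essentially the same route as the paper: both use surjectivity to show that $f(1_A)$ acts as a unit on every element, and then conclude $f(1_A)=1_A$ (the paper does this by evaluating at $1_A$ itself, which is just the uniqueness-of-units argument in disguise; your idempotency observation is harmless but not needed). No issues.
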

\begin{proof}
Let $\varphi:A\rightarrow A$ be such an automorphism. We have, for every $x\in A$,
$\varphi(x)=\varphi(1_A.x)=\varphi(1_A)\varphi(x)$.
Since $\varphi$ is bijective, for every $x\in A$ there exists $t\in A$ such that $x=\varphi(t)$,
so
\[
\varphi(1_A)x=\varphi(1_A)\varphi(t)=\varphi(1_A.t)=\varphi(t)=x,
\]
in particular
\[
\varphi(1_A)=\varphi(1_A).1_A=1_A.
\]
\end{proof}
We deduce:
\begin{lem}
For every special symmetric Frobenius bialgebra $A$, we have
\[
Aut_{sFrob}(A)=Aut_{uCom}(A)=Aut_{Com}(A).
\]
\end{lem}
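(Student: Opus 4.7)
The plan is to prove the two equalities by proving inclusions in both directions, exploiting the preceding lemmas to absorb the ``unit'' and ``counit'' data into the algebra and coalgebra structures respectively.

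First, the obvious inclusions $Aut_{sFrob}(A)\subseteq Aut_{uCom}(A)\subseteq Aut_{Com}(A)$ are immediate, since a morphism preserving strictly more structure is in particular a morphism preserving any subset of that structure (recall that a special symmetric Frobenius bialgebra is in particular commutative and unitary). So the entire content of the lemma lies in the two reverse inclusions.

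For $Aut_{Com}(A)\subseteq Aut_{uCom}(A)$, I would simply invoke Lemma 5.12: every algebra automorphism of a unitary algebra preserves the unit, so any $\varphi\in Aut_{Com}(A)$ belongs automatically to $Aut_{uCom}(A)$. This step is essentially free.

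The substantive step is $Aut_{uCom}(A)\subseteq Aut_{sFrob}(A)$. Given $\varphi\in Aut_{uCom}(A)$, I would first apply Lemma 5.10 to conclude that $\varphi$ is also a coalgebra automorphism, i.e. commutes with the coproduct $\Delta$. It then remains to check that $\varphi$ preserves the counit $\epsilon$, which would promote $\varphi$ to a morphism of all the generating operations of the properad $sFrob$; the Frobenius and special relations are then automatically respected, being equations between the structure maps. To prove $\epsilon\circ\varphi=\epsilon$, I would run the dual argument to Lemma 5.12 using the counit axiom. Namely, set $\psi:=\epsilon-\epsilon\circ\varphi$; writing $x=\sum x_{(1)}\epsilon(x_{(2)})$ and applying $\varphi$ on one hand, and using $\Delta\circ\varphi=(\varphi\otimes\varphi)\circ\Delta$ together with the counit axiom for $\varphi(x)$ on the other hand, one finds $(\varphi\otimes\psi)\circ\Delta=0$. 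Since $\varphi$ is bijective, this reduces to $(id\otimes\psi)\circ\Delta=0$, and a second application of the counit axiom (precomposing with $\epsilon\otimes id$) forces $\psi=0$.

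The only potential obstacle is the counit preservation argument, but it is a routine dualization of the proof of Lemma 5.12 using the counit axiom $(\epsilon\otimes id)\circ\Delta=id$, so no significant technical difficulty should arise. Combining the three steps yields the chain of equalities $Aut_{sFrob}(A)=Aut_{uCom}(A)=Aut_{Com}(A)$.
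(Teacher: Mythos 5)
Your proof is correct and follows essentially the same route as the paper, which deduces this lemma without a written proof from Lemma 5.10 (an algebra automorphism of a special symmetric Frobenius bialgebra is automatically a coalgebra automorphism, and simultaneous algebra/coalgebra morphisms are exactly the $sFrob$-morphisms) together with Lemma 5.12 (algebra automorphisms preserve the unit). Your explicit dualization of Lemma 5.12 to check that the counit is preserved is a correct filling-in of a detail the paper leaves implicit in the phrase ``coalgebra automorphism.''
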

Hence
\begin{prop}
We have
\[
Aut_{\mathbb{K}}(H^*X)/Aut_{Com}(H^*X)\hookrightarrow\pi_0sFrob_{\infty}\{H^*X\}_{[\phi]}
\]
where $\phi$ is the trivial $sFrob_{\infty}$-structure $sFrob_{\infty}\stackrel{\sim}{\rightarrow}sFrob
\rightarrow End_{H^*X}$.
\end{prop}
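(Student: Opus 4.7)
The plan is to combine Lemma 2.20 applied to the prop $P = sFrob$ with the identification of automorphism groups provided by Lemma 5.10. The proposition does not really introduce any new homotopical content beyond what has already been established; it is a repackaging of the general formula for $\pi_0$ of the local realization space at a trivial structure, specialized to the prop encoding special symmetric Frobenius bialgebras.

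First I would verify that the setup of Lemma 2.20 applies. We take $P = sFrob$, which has trivial differential (being concentrated in degree $0$), and we need $H^*X$ to be a $P$-algebra: this is precisely the hypothesis that $H^*X$ carries a special symmetric Frobenius bialgebra structure. The structure $\phi : sFrob_{\infty} \stackrel{\sim}{\to} sFrob \to End_{H^*X}$ is then exactly the trivial $sFrob_{\infty}$-structure considered in Lemma 2.20. Applying that lemma directly yields
\[
Aut_{\mathbb{K}}(H^*X)/Aut_{sFrob}(H^*X) \hookrightarrow \pi_0 sFrob_{\infty}\{H^*X\}_{[\phi]}.
\]

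Second I would invoke Lemma 5.10, which gives $Aut_{sFrob}(H^*X) = Aut_{Com}(H^*X)$. Substituting this identification into the quotient on the left-hand side produces exactly the claimed injection. Concretely, this identification uses the rigidity result of \cite{FRS2} that an endomorphism of a special symmetric Frobenius bialgebra is an algebra automorphism iff it is a coalgebra automorphism, together with the observation (Lemma 5.12) that every unital algebra automorphism preserves the unit.

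There is no genuine obstacle to overcome: the preceding lemmas carry all of the homotopy-theoretic and algebraic content. The only thing worth checking carefully is that $H^*X$ is indeed a special symmetric Frobenius bialgebra in the examples of interest, so that Lemma 5.10 is applicable; but this is exactly the statement of Lemma 5.8 for compact oriented manifolds, which provides the standing context in which the proposition is meant to be applied.
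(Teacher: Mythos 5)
Your proposal is correct and follows exactly the paper's own route: the paper deduces this proposition by specializing the general injection for the trivial $P_{\infty}$-structure (the final lemma of Section 2, which gives $Aut_{\mathbb{K}}(H^*X)/Aut_{sFrob}(H^*X)\hookrightarrow\pi_0 sFrob_{\infty}\{H^*X\}_{[\phi]}$) and then substituting the identification $Aut_{sFrob}(H^*X)=Aut_{Com}(H^*X)$ obtained from the rigidity result of \cite{FRS2} together with unit preservation. Your only slips are cosmetic (the lemma numbers are slightly off, and $sFrob$ is not concentrated in degree $0$ since the coproduct has degree $m$, though it does have trivial differential, which is all that is needed).
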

To put these observations into words, we proved that we can compute a subset of
connected components of the local realization space of Poincaré duality on cochains, around the trivial homotopy structure, as a quotient of its graded vector space automorphisms by its commutative algebra automorphisms.
We use this result to do computations for some examples of manifolds.
\begin{example}
The cohomology ring of the $n$-sphere $S^n$ is $H^*S^n=\mathbb{Q}[x]/(x^2)$. A basis is given by $1,x$, so
\[
Aut_{\mathbb{Q}}(H^*S^n)\cong\mathbb{Q}^*\oplus\mathbb{Q}^*.
\]
To determine an automorphism preserving the commutative structure, we just have to fix a non zero value on $x$, so
\[
Aut_{Com}(H^*S^n)\cong\mathbb{Q}^*.
\]
We deduce that
\[
\mathbb{Q}^*\hookrightarrow\pi_0sFrob_{\infty}\{H^*X\}_{[\phi]}.
\]
\end{example}
\begin{example}
The cohomology ring of the complex projective space $\mathbb{C}P^n$ is $H^*\mathbb{C}P^n=\mathbb{Q}[x]/(x^{n+1})$.
A basis is given by  $[\mathbb{C}P^n],x,x^2,...,x^n$, so
\[
Aut_{\mathbb{Q}}(H^*\mathbb{C}P^n)\cong (\mathbb{Q}^*)^{\oplus n+1}.
\]
To determine an automorphism preserving the commutative structure, we just have to fix a non zero value on $x$ (the unit $[\mathbb{C}P^n]$ is preserved), so
\[
Aut_{Com}(H^*\mathbb{C}P^n)\cong\mathbb{Q}^*.
\]
We deduce that
\[
(\mathbb{Q}^*)^{\oplus n}\hookrightarrow\pi_0sFrob_{\infty}\{H^*X\}_{[\phi]}.
\]
\end{example}
\begin{example}
Let $S$ be a compact oriented surface of genus $g$. The zero and second cohomology groups of $S$ are $\mathbb{Q}$, and the first cohomology group
is generated by $2g$ generators. We thus have
\[
Aut_{\mathbb{Q}}(H^*S)\cong\mathbb{Q}^*\oplus GL_{2g}(\mathbb{Q})\oplus\mathbb{Q}^*.
\]
To determine an automorphism preserving the commutative structure, we just have to fix a non zero value on each generator, so
\[
Aut_{Com}(H^*S)\cong GL_{2g}(\mathbb{Q})\oplus\mathbb{Q}^*.
\]
We deduce that
\[
\mathbb{Q}^*\hookrightarrow\pi_0sFrob_{\infty}\{H^*S\}_{[\phi]}.
\]
\end{example}
\begin{example}
One constructs the cohomology ring of a connected sum of two orientable $n$-manifolds
$M\sharp N$ as follows. Using a Mayer-Vietoris exact sequence argument, one checks that
the zeroth cohomology group of $M\sharp N$ is generated by the fundamental class $[M\sharp N]$, the $n^{th}$ group is generated by a top cohomology class and
\[
H^iM\sharp N=H^iM\oplus H^iN
\]
for $0<i<n$. The cup product of two classes belonging respectively to $H^iM$ and $H^jN$ is zero.
The cup product of two classes belonging respectively to $H^iM$ and $H^jM$ is their cup product in $H^{i+j}M$ for $i+j<n$,
and is identified with the top cohomology class of $M\sharp N$ for $i+j=n$.

Let $M=\mathbb{C}P^2 \sharp \mathbb{C}P^2$ be the connected sum of two copies of $\mathbb{C}P^2$. This is a $4$-dimensional orientable closed manifold.
The zeroth and fourth cohomology groups are isomorphic to $\mathbb{Q}$.
For $0<i<4$, we have
\[
H^iM\cong H^i(\mathbb{C}P^2)\oplus H^i(\mathbb{C}P^2),
\]
hence $H^1M=H^3M=0$ and $H^2M\cong \mathbb{Q}x_1\oplus\mathbb{Q}x_2$.
Regarding the ring structure, the classes $x_1^2$ and $x_2^2$ are identified with the top cohomology class generating $H^4M$, hence $x_1^2=x_2^2$,
and the cup product $x_1\cup x_2$ is zero.

The automorphisms of $H^*M$ as a graded vector space are
\begin{eqnarray*}
Aut_{\mathbb{Q}}(H^*M) & = & GL_1(\mathbb{Q})\times GL_2(\mathbb{Q})\times GL_1(\mathbb{Q}) \\
 & = & \mathbb{Q}^*\times GL_2(\mathbb{Q})\times\mathbb{Q}^*.
\end{eqnarray*}
The automorphisms of $H^*M$ as a graded commutative algebra fix the unit so they will be of the form
\[
Aut_{Com}(H^*M)=H\times\mathbb{Q}^*
\]
where $H$ is a certain subgroup of $GL_2(\mathbb{Q})$ that we now explicit. Let $f$ be a commutative algebra automorphism of $H^*M$, it induces an automorphism
on $H^2M$ that we still denote by $f$. This automorphism corresponds to a certain invertible matrix
\[
\begin{pmatrix}
a_1 & b_1 \\
a_2 & b_2
\end{pmatrix}
\]
where $f(x_1)=a_1x_1+a_2x_2$ and $f(x_2)=b_1x_1+b_2x_2$.
The relation $x_1^2=x_2^2$ implies that $f(x_1)^2=f(x_2)^2$. We have
\begin{eqnarray*}
f(x_1)^2 & = & (a_1x_1+a_2x_2)^2 \\
 & = & a_1^2x_1^2+2a_1a_2x_1\cup x_2+a_2^2x_2^2 \\
 & = & (a_1^2+a_2^2)x_1^2,
\end{eqnarray*}
and
\[
f(x_2)^2=(b_1^2+b_2^2)x_1^2
\]
so
\[
a_1^2+a_2^2 = b_1^2+b_2^2,
\]
that is, the two vectors of $\mathbb{Q}^2$ defining the columns of matrix have the same norm.
The relation $x_1\cup x_2=0$ implies that $f(x_1)\cup f(x_2)=0$, that is
\begin{eqnarray*}
(a_1x_1+a_2x_2)\cup (b_1x_1+b_2x_2) & = & a_1b_1x_1^2+(a_1b_2+a_2b_1)x_1\cup x_2+a_2b_2x_2^2 \\
 & = & (a_1b_1+a_2b_2)x_1^2 \\
 & = & 0
\end{eqnarray*}
hence
\[
a_1b_1+a_2b_2=0.
\]
This means that the two column vectors of the matrix are orthogonal (their scalar product is zero).
This matrix is consequently orthogonal up to normalization, and we get a semi-direct product
\[
H=\mathbb{Q}^*\ltimes O_2(\mathbb{Q}).
\]
If we consider the cohomology with real coefficients instead of the rational one, we get the conformal orthogonal group $CO_2(\mathbb{R})$
(the group of conformal linear maps on $\mathbb{R}^2$).
We still denote by $CO_2(\mathbb{Q})$ its rational version. We have
\[
GL_2(\mathbb{Q})/CO_2(\mathbb{Q})\times\mathbb{Q}^*\hookrightarrow\pi_0sFrob_{\infty}\{H^*(\mathbb{C}P^2\sharp\mathbb{C}P^2)\}_{[\phi]}.
\]
When working over $\mathbb{R}$, by the exact sequence
\[
1\rightarrow SL_2(\mathbb{R})\rightarrow GL_2(\mathbb{R})\stackrel{det}{\rightarrow}\mathbb{R}^*\rightarrow 1,
\]
the general linear group is a semi-direct product
\[
GL_2(\mathbb{R})=\mathbb{R}^*\ltimes SL_2(\mathbb{R}).
\]
Moreover, the special linear group admits the Iwasawa decomposition
\[
SL_2(\mathbb{R}) = K\times A\times N.
\]
The quotient $GL_2(\mathbb{R})/CO_2(\mathbb{R})$ is then the direct product $A\times N$.
\end{example}


\begin{thebibliography}{10}

\bibitem{BDG}D. Blanc, W. G. Dwyer, P. G. Goerss, \textit{The realization space of a $\Pi$-algebra: a moduli problem in algebraic topology}, Topology 43, no. 4 (2004), 857-892.

\bibitem{BG}A. K. Bousfield, V. K. A. M. Guggenheim, \textit{On PL de Rham theory and rational homotopy theory},
Memoirs of the American Mathematical Society 8 (1976), no. 179, ix+94pp.

\bibitem{CMW}R. Campos, S. Merkulov, T. Willwacher, \textit{The Frobenius properad is Koszul}, Duke Mathematical Journal 165 (2016), no. 15, 2921-2989.

\bibitem{CS1}M. Chas, D. Sullivan, \textit{String Topology}, preprint, arXiv:math/9911159.

\bibitem{CS2}M. Chas, D. Sullivan, \textit{Closed string operators in topology leading to Lie bialgebras and higher string algebra}, The legacy of Niels Henrik Abel, 771-784, Springer, Berlin, 2004.

\bibitem{DGMS}P. Deligne, P. Griffiths, J. Morgan, D. Sullivan, \textit{Real homotopy theory of Kähler manifolds}, Inventiones mathematicae 29 (1975), 245-274.

\bibitem{DK2}W.G. Dwyer and D.M. Kan, \textit{Calculating simplicial localizations}, Journal of Pure and Applied Algebra 18
(1980), 17-35.

\bibitem{DK3}W. G. Dwyer, D. M. Kan, \textit{Function complexes in homotopical algebra}, Topology 19 (1980),
pp. 427-440.

\bibitem{FOT}Y. Félix, J. Oprea, D. Tanré, \textit{Algebraic Models in Geometry}, Oxford Graduate Texts in Mathematics 17, Oxford University Press, Oxford, 2008, xxii+460pp.

\bibitem{Fre0}B. Fresse, \textit{Operadic cobar constructions, cylinder objects and homotopy morphisms of algebras over operads},
Alpine perspectives on algebraic topology, Contemporary Mathematics 504, Amer. math Soc., Providence, RI (2009), 125-188.

\bibitem{Fre1}B. Fresse, \textit{Modules over operads and functors}, Lecture Notes in Mathematics 1967,
Springer Verlag, Berlin (2009), x+308pp.

\bibitem{Fre2}B. Fresse, \textit{Props in model categories and homotopy invariance of structures}, Georgian Mathematical Journal 17 (2010), 79-160.

\bibitem{FRS}J. Fuchs, I. Runkel, C. Schweigert, \textit{TFT construction of RCFT correlators. I: Partition functions},
Nuclear Physics B 646 (2002) 353-497.

\bibitem{FRS2}J. Fuchs, I. Runkel, C. Schweigert, \textit{Ribbon categories and (unoriented) CFT: Frobenius algebras, automorphisms, reversions}, Contemporary Mathematics 431 (2007) 203-224.

\bibitem{GJ}P. G. Goerss, J. F. Jardine, \textit{Simplicial homotopy theory}, Progress in Mathematics 174, Birkhäuser Verlag, Basel (1999), xvi+510pp.

\bibitem{GH}P. G. Goerss, M. J. Hopkins, \textit{Moduli spaces of commutative ring spectra}, in Structured Ring Spectra,
London Mathematical Society, Lecture Note Series 315, Cambridge University Press, Cambridge (2004).

\bibitem{HS}S. Halperin, J. Stasheff, \textit{Obstructions to homotopy equivalences},  Advances in Mathematics 32 (1979), no. 3, 233-279.

\bibitem{Hir}P. S. Hirschhorn, \textit{Model categories and their localizations},
Mathematical Surveys and Monographs volume 99, Amer. Math. Soc., Providence, RI (2003), xvi+457pp.

\bibitem{Hir2}P. S. Hirschhorn, \textit{Overcategories and undercategories of model categories}, preprint arXiv:1507.01624.

\bibitem{Hof}E. Hoffbeck, \textit{Gamma-homology of algebras over an operad},  Algebraic and Geometric Topology 10 (2010), no. 3, 1781-1806.

\bibitem{Hof2}E. Hoffbeck, \textit{Obstruction theory for algebras over an operad}, Annales Mathématiques Blaise Pascal 23 (2016), 75-107.

\bibitem{Hov}M. Hovey, \textit{Model categories}, Mathematical Surveys and Monographs volume 63, Amer. Math. Soc., Providence, RI (1999), xii+209pp.

\bibitem{Koc}J. Kock, \textit{Frobenius algebras and 2D topological quantum field theories},
London Mathematical Society Student Texts No. 59, Cambridge University Press, Cambridge (2004).

\bibitem{LS}P. Lambrechts, D. Stanley, \textit{Poincare duality and commutative graded differential algebras}, Annales Scientifiques de l'École Normale Supérieure 41 (2008), 495-509.

\bibitem{LV}J-L. Loday, B. Vallette, \textit{Algebraic operads},
Grundlehren Math. Wiss. 346, Springer, Heidelberg, 2012.

\bibitem{MLa}S. MacLane, \textit{Categorical algebra}, Bulletin of the American Mathematical Society 71, Number 1 (1965), 40-106.

\bibitem{Mar}M. Markl, \textit{Homotopy algebras are homotopy algebras}, Forum Mathematicum 16 (2004), 129-160.

\bibitem{Mar2}M. Markl, \textit{Intrinsic brackets and the L-infinity deformation theory of bialgebras},  Journal of Homotopy and Related Structures 5 (2010), no. 1, 177-212.

\bibitem{May}J. P. May, \textit{Simplicial objects in algebraic topology}, Chicago Lectures in Mathematics, University of Chicago Press, Chicago (1967).

\bibitem{MV1}S. Merkulov, B. Vallette, \textit{Deformation theory of representation of prop(erad)s I},
Journal für die reine und angewandte Mathematik (Crelles Journal) 634 (2009), 51-106.

\bibitem{MV2}S. Merkulov, B. Vallette, \textit{Deformation theory of representation of prop(erad)s II},
Journal für die reine und angewandte Mathematik (Crelles Journal) 636 (2009), 123-174.

\bibitem{Mil}J. Millès, \textit{André-Quillen cohomology of algebras over an operad}, Advances in Mathematics 226 (2011), no. 6, 5120-5164.

\bibitem{Qui}D. G. Quillen, \textit{Higher algebraic K-theory. I}, in Algebraic K-theory I: Higher K-theories, Lecture Notes in Mathematics 341, Springer, Berlin (1973), 85-147.

\bibitem{Rez}C. W. Rezk, \textit{Spaces of algebra structures and cohomology of operads}, Thesis, MIT, 1996.

\bibitem{Sul}D. Sullivan, \textit{Infinitesimal computations in topology}, Institut des Hautes Études Scientifiques. Publications Mathématiques 47 (1977), 269-331 (1978).

\bibitem{Val}B. Vallette, \textit{A Koszul duality for props}, Transactions of the American Mathematical Society 359 (2007), 4865-4943.

\bibitem{Yal1}S. Yalin, \textit{Classifying spaces of algebras over a prop}, Algebraic and Geometric Topology 14, issue 5 (2014), 2561--2593.

\bibitem{Yal3}S. Yalin, \textit{Simplicial localization of homotopy algebras over a prop}, Mathematical Proceedings of the Cambridge Philosophical Society 157 (2014), no. 3, pp. 457-468.

\bibitem{Yal2}S. Yalin, \textit{Function spaces and classifying spaces of algebras over a prop}, arXiv:1502.01652, to appear in Algebraic and Geometric Topology.

\bibitem{Yal4}S. Yalin, \textit{Moduli stacks of algebraic structures and deformation theory}, Journal of Noncommutative Geometry 10 (2016), 579-661.

\end{thebibliography}
\end{document}